       \def\@makefnmark{%
               \leavevmode
               \raise.9ex\hbox{\check@mathfonts
                       \fontsize\sf@size\z@\normalfont%
                               \@thefnmark}%
       }
\newtheorem{dfn}{Definition}[section]
\newtheorem{thm}[dfn]{Theorem}
\newtheorem{lem}[dfn]{Lemma}
\newtheorem{prp}[dfn]{Proposition}
\newtheorem{rmk}[dfn]{Remark}
\newtheorem{cor}[dfn]{Corollary}
\newtheorem{mthm}{Main\ Theorem}
\newtheorem{mcor}{Corollary}
\newcommand{\ind}{\mathrm{index}}
\newcommand{\sgn}{\mathrm{sgn}}
\newcommand{\der}{\mathrm{d}}
\title{Toeplitz operators and the Roe-Higson type index theorem}
\author{Tatsuki SETO}
\address{Graduate School of Mathematics, Nagoya University, Furocho, Chikusaku, Nagoya, Japan}
\email{m11034y@math.nagoya-u.ac.jp}
\subjclass[2000]{Primary 19K56; Secondary 46L87.}
\keywords{Coarse geometry, Hilbert transformation, Index theory, 
	Kasparov product, Partitioned manifolds, Roe algebra, Roe cocycle, Toeplitz operators}
\begin{document}

\begin{abstract}
Let $M$ be a complete Riemannian manifold
and assume that $M$ is partitioned by a hypersurface $N$. 
In this paper we introduce 
a novel class of functions $C_{\mathrm{w}}(M)$ 
on noncompact manifolds, 
which is slightly larger than 
the algebra of Higson functions. 
Out of $\phi$ that belongs to $C_{\mathrm{w}}(M)$ 
we construct an index class $\mathrm{Ind}(\phi , D)$ 
in $K_{1}$-group of the Roe algebra of $M$ 
by using the Kasparov product. 
It is supposed to be a counterpart of 
Roe's odd index class. 
We finally prove that 
Connes' pairing of $\mathrm{Ind}(\phi , D)$ and  
Roe's cyclic $1$-cocycle is equal to the Fredholm index 
of a Toeplitz operator on $N$. 
This is an extension of the Roe-Higson index theorem 
to even-dimensional partitioned manifold. 
\end{abstract}

\maketitle

\section*{Introduction}

Let $M$ be a complete Riemannian manifold 
and assume that $M$ is a partitioned manifold. 
That is,  
there exists a closed submanifold $N$ 
of codimension one such that 
$M$ is decomposed by $N$ into two components $M^{+}$ and $M^{-}$,  
and one has $N = M^{+} \cap M^{-} = \partial M^{+} = \partial M^{-}$; 
see Definition \ref{dfn:partitioned}. 
Let $S \to M$ be a Clifford bundle in the sense of J. Roe 
\cite[Definition 3.4]{MR1670907} 
and $D$ the Dirac operator of $S$. 
Denote by $S_{N}$ the restriction of $S$ to $N$ 
and $\nu$ a unit normal vector field on $N$ pointing from $M^{-}$ into $M^{+}$. 
Then we can equip $S_{N}$ with a $\mathbb{Z}_{2} ( = \mathbb{Z}/2\mathbb{Z})$-graded 
Clifford bundle structure, 
where the $\mathbb{Z}_{2}$-grading on $S_{N}$ 
is defined by using the Clifford action of $\nu$. 
Let $D_{N}$ be the graded Dirac operator on $S_{N}$. 

Let $C^{\ast}(M)$ be the Roe algebra of $M$, 
which is  
a non-unital $C^{\ast}$-algebra 
introduced by Roe in 
\cite{MR1670907}. 
In 
\cite{MR1670907}, 
Roe also defined the odd index class odd-ind$(D) = [u_{D}] \in K_{1}(C^{\ast}(M))$,  
where $u_{D}$ is the Cayley transform of $D$. 

Roe also defined the cyclic $1$-cocycle $\zeta$ 
on a dense subalgebra $\mathscr{X}$ of $C^{\ast}(M)$, 
which is called the Roe cocycle. 
The Roe cocycle is 
constructed to be 
the Connes-Chern character of the 
Fredholm module 
defined by the partition of $M$ 
which corresponds to the 
Wiener-Hopf extension. 
Thus 
Connes' pairing $\langle x , \zeta \rangle$ of $\zeta$ with 
$x \in K_{1}(C^{\ast}(M))$ 
is equal to the index map 
defined by the Wiener-Hopf extension; 
see \cite[Remark 4.14]{MR3000501} and Section \ref{subsec:ext}. 

The Roe cocycle $\zeta$ is related to 
the Poincar$\acute{\text{e}}$ dual of $N$, $pd(N) \in H^{1}_{c}(M)$; 
see \cite[Section 6.1]{MR1147350}. 
In fact, there uniquely exists 
a coarse cohomology class $\alpha \in HX^{1}(M)$  
such that the character map $HX^{1}(M) \to H^{1}_{c}(M)$ sends $\alpha$ to $pd(N)$. 
Moreover the character map $HX^{1}(M) \to HC^{1}(\mathscr{X})$ 
sends $\alpha$ to $[\zeta]$. 
By this relationship of $\zeta$ and $N$, 
it is expected that 
we could pick up some informations of $N$ by using $\zeta$. 
Indeed, Roe proved Connes' pairing $\langle \text{odd-ind}(D) , \zeta \rangle$ 
is equal to the Fredholm index of $D^{+}_{N}$ up to a certain constant multiple  
\cite{MR1670907}. 
In 
\cite{MR1113688}, 
N. Higson gave a simplified proof of a variation of Roe's theorem, thus 
we call it the Roe-Higson index theorem in this paper. 


On the other hand, 
$\ind (D_{N}^{+})$ is $0$ 
for $N$ is of odd dimension; see, for instance 
\cite[Proposition 11.14]{MR1670907}. 
This implies that the Roe-Higson index $\langle \text{odd-ind}(D) , \zeta \rangle$ 
is trivial when $M$ is of even dimension. 
However, Connes' pairing of $\zeta$ with $x \in K_{1}(C^{\ast}(M))$ 
is non trivial in general. 

In this paper, we shall develop an index theorem 
on even dimensional partitioned manifolds, 
which is analogous to the Roe-Higson index theorem. 
For this purpose, 
we need to replace two ingredients, 
odd-ind$(D)$ and the Dirac operator $D_{N}^{+}$ 
by an index class $\mathrm{Ind}(\phi , D) = [\phi] \hat{\otimes} [D]$ 
and a Toeplitz operator on $N$, respectively. 
In order to define this index class $\mathrm{Ind}(\phi , D)$ on $M$,  
we need to introduce a new class of 
$C^{\ast}$-algebra $C_{\mathrm{w}}(M)$, 
which is larger than the Higson functions on $M$ 
and smaller than the bounded continuous functions on $M$; 
see Definition \ref{dfn:algebra}. 
In fact, we use 
$\phi \in GL_{l}(C_{\mathrm{w}}(M))$ and 
$[D] \in KK^{0}(C_{\mathrm{w}}(M) , C^{\ast}(M))$. 
By using the algebra $C_{\mathrm{w}}(M)$, 
this index class $\mathrm{Ind}(\phi , D)$ can be regarded as a 
counterpart of Roe's odd index; see subsection \ref{sec:rel}. 
It turns out Connes' pairing $\langle \mathrm{Ind}(\phi , D) , \zeta \rangle$ 
is equal to the Fredholm index of a Toeplitz operator on $N$ 
up to a certain constant multiple. 
The precise statement as follows:

\begin{mthm}[see Theorem \ref{thm}]
Let $M$ be a complete Riemannian manifold which is partitioned by $N$ as previously. 
Let $S \to M$ be a graded Clifford bundle with the grading $\epsilon$ and 
denote by $D$ the graded Dirac operator of $S$. 
Take $\phi \in GL_{l}(C_{\mathrm{w}}(M))$; see Definition \ref{dfn:algebra}. 
Then the following formula holds: 
\[ \langle \mathrm{Ind}(\phi , D) , \zeta \rangle = -\frac{1}{8\pi i}\ind (T_{\phi |_{N}}). \] 
\end{mthm}

Applying a topological formula of the Fredholm index for Toeplitz operators 
proved by P. Baum and R. G. Douglas \cite{MR679698}, 
we obtain the following: 

\begin{mcor}[see Corollary \ref{cor:mcor}]
Let $M$ be a partitioned manifold partitioned by $(M^{+}, M^{-}, N)$. 
Denote by $\Pi$ the characteristic function of $M^{+}$. 
Let $S \to M$ be a graded Clifford bundle with the grading $\epsilon$, 
and denote by 
$D$ the graded Dirac operator of $S$. 
We assume $\phi \in C^{\infty}(M ; GL_{l}(\mathbb{C}))$ is bounded with 
bounded gradient and $\phi^{-1}$ is also a bounded function. 
Then one has 
\begin{align*}
& \ind \left( \Pi (D+ \epsilon)^{-1}
\begin{bmatrix} \phi & 0 \\ 0 & 1 \end{bmatrix}
(D+\epsilon ) \Pi : \Pi (L^{2}(S))^{l} \to \Pi (L^{2}(S))^{l} \right)  \\
=& 
\int_{S^{\ast}N}
\pi^{\ast}\mathrm{Td}(TN \otimes \mathbb{C})ch(\mathcal{S}^{+})\pi^{\ast}ch(\phi). 
\end{align*}
\end{mcor}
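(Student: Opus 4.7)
My strategy is to combine the Main Theorem with the classical topological index formula for Toeplitz operators on a closed odd-dimensional manifold (Baum--Douglas / Boutet de Monvel). The assumptions on $\phi$ ensure that $\phi \in GL_{l}(C_{w}(M))$, so the Main Theorem applies.

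First, I would identify the operator
\[ \Pi(D+\epsilon)^{-1}\begin{bmatrix} \phi & 0 \\ 0 & 1 \end{bmatrix}(D+\epsilon)\Pi \]
as the Wiener--Hopf realization of the class $\mathrm{Ind}(\phi, D) \in K_{1}(C^{\ast}(M))$ associated to the extension recalled in Section \ref{subsec:ext}. The block structure, with $\phi$ on the $S^{+}$-component and the identity on the $S^{-}$-component, is the natural representative of the Kasparov product $[\phi] \hat{\otimes} [D]$; compressing by $\Pi$ is the standard way to pass from an index class in $K_{1}$ to an honest Fredholm operator whose index computes Connes' pairing. With this identification in hand, the Main Theorem says that the Fredholm index on the left-hand side equals $\ind(T_{\phi|_{N}})$, the factor $-1/(8\pi i)$ being absorbed into the normalizations used to set up the pairing.

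Second, I would apply the Baum--Douglas / Boutet de Monvel topological index formula for Toeplitz operators: on the closed odd-dimensional manifold $N$, the symbol $\phi|_{N} : N \to GL_{l}(\mathbb{C})$ together with the Hardy-type projection determined by $D_{N}^{+}$ yields
\[ \ind(T_{\phi|_{N}}) = \langle \pi^{\ast} \mathrm{Td}(TN \otimes \mathbb{C})\, \mathrm{ch}(S^{+})\, \pi^{\ast} \mathrm{ch}(\phi),\, [S^{\ast}N] \rangle, \]
which is precisely the right-hand side of the corollary.

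The main obstacle will be bookkeeping of signs and normalization constants: one must verify that the factor $-1/(8\pi i)$ in the Main Theorem is exactly cancelled by the conventions of the Chern character and the orientation of $[S^{\ast}N]$ used in the topological formula. A secondary technical point is that the operator on the left-hand side is genuinely Fredholm; this should follow from the hypotheses that $\phi$, $\phi^{-1}$ and $\nabla\phi$ are bounded, which make $[\phi, (D+\epsilon)^{-1}]$ compact and ensure that the compression has the expected principal symbol on $N$.
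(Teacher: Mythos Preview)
Your second step---applying the topological index formula for Toeplitz operators, Proposition~\ref{prp:Toeplitzindex}---is exactly what the paper does, and the ``sign and normalization'' worry is a non-issue: once you have $\ind(\Pi\varrho(\phi)\Pi)=\ind(T_{\phi|_{N}})$, the constants $-1/(8\pi i)$ cancel exactly as you say.

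The gap is in your first step. The representative of $\mathrm{Ind}(\phi,D)$ that the Kasparov product actually produces (Section~\ref{sub:index_class} and Remark~\ref{rmk:firstind}) is
\[
u_{\chi,\phi}\;=\;\mathcal{D}_{\chi}\begin{bmatrix}\phi & 0\\ 0 & 1\end{bmatrix}\mathcal{D}_{\chi},
\qquad \mathcal{D}_{\chi}=\chi(D)+\epsilon\,\eta(D),
\]
with $\chi$ a chopping function, \emph{not} the unbounded-picture operator $\varrho(\phi)=(D+\epsilon)^{-1}\bigl[\begin{smallmatrix}\phi&0\\0&1\end{smallmatrix}\bigr](D+\epsilon)$ that appears in the corollary. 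The paper connects $\Pi u_{\chi_{0},\phi}\Pi$ to $\Pi\varrho(\phi)\Pi$ through the homotopy $\rho_{t}$ of Proposition~\ref{prp:homotopy-Kasp}, and the continuity of that homotopy at $t=0$ uses in an essential way that $[|D|,\phi]$ is bounded, i.e.\ $\phi\in GL_{l}(\mathscr{W}_{1}(M))$. The hypotheses of the corollary only give $\phi\in GL_{l}(\mathscr{W}(M))$, so you cannot simply invoke the Main Theorem here; the identification you call ``the natural representative'' is precisely the nontrivial analytic step.

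The paper therefore does \emph{not} deduce Corollary~\ref{cor:mcor} from the Main Theorem. Instead it proves the equality $\ind(\Pi\varrho(\phi)\Pi)=\ind(T_{\phi|_{N}})$ independently, running the same two-stage argument for $\varrho(\phi)$ in place of $u_{\chi,\phi}$: first the explicit cylinder computation of Section~\ref{sec:cyl} (Propositions~\ref{prp:homotopy}--\ref{prp:inRoe} and the Hilbert-transform calculation), and then the Higson-type reduction of Section~\ref{sec:gen} using Lemma~\ref{cobor} and Corollary~\ref{ourHcor}, which are formulated directly for $\varrho(\phi)$ and require only $\phi\in GL_{l}(\mathscr{W}(M))$. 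Only after that does it apply Proposition~\ref{prp:Toeplitzindex}. If you want to salvage your route, you would need either to extend Proposition~\ref{prp:homotopy-Kasp} to all of $\mathscr{W}(M)$ or to supply an approximation of $\phi$ by elements of $\mathscr{W}_{1}(M)$ that preserves both indices; neither is addressed in your proposal.
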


The idea of the proof is as follows. 
Firstly, 
we calculate the Kasparov product $[\phi] \hat{\otimes} [D]$ 
by using the Cuntz picture of $[D]$. 
Secondly, we calculate $\langle \mathrm{Ind}(\phi , D) , \zeta \rangle$ explicitly 
by using the Hilbert transformation and a homotopy of Fredholm operators 
the case for $M = \mathbb{R} \times N$ and 
$\phi = 1 \otimes \psi$ for $\psi \in C^{\infty}(N ; GL_{l}(\mathbb{C}))$. 
Finally, we reduce the general case to $\mathbb{R} \times N$ 
by applying a similar argument in Higson \cite{MR1113688}. 

Set $M = \mathbb{R} \times N$ and 
assume that $N$ is of odd dimension. 
Let $i : N \ni x \mapsto (0,x) \in \mathbb{R} \times N$ be the 
inclusion map. 
Connes \cite{MR679730,MR775126} defined an element $i! \in KK^{1}(N , M)$. 
In this case, the main theorem is derived from the Roe-Higson index theorem 
by applying $i!$ \textit{formally} as follows. 
%
The Dirac operator $D$ on $M$ 
defines an element $[D] \in KK^{0}(M , \mathrm{pt})$. 
Let us take a function $\phi : M \to GL_{l}(\mathbb{C})$ 
and suppose that $\phi$ determines an 
element 
$[[ \phi ]] \in KK^{1}(M,M)$. 
By the Kasparov product, we have an element 
\[ 
[[ \phi ]] \hat{\otimes} [D] \in KK^{1}(M , \mathrm{pt}) 
\]
and also 
\[ 
i! \hat{\otimes} ( [[\phi ]] \hat{\otimes} [D]) 
	= [[\phi |_{N}]] \hat{\otimes}  [D_{N}] \in KK^{0}(N , \mathbb{C}). 
\]
On the other hand, the Roe-Higson index theorem implies 
$\langle A(x) , \zeta \rangle = q_{\ast}(i!  \hat{\otimes} x)$ for $x \in KK^{1}(M, \mathrm{pt})$, 
where $A : KK^{1}(M, \mathrm{pt}) \to K_{1}(C^{\ast}(M))$ is the assembly map 
and $q_{\ast} : K^{0}(N) \to \mathbb{Z}$ is 
the homomorphism induced by the mapping 
$q$ from $N$ to a point. 
Thus we have 
$\langle A([[\phi ]] \hat{\otimes} [D]) , \zeta \rangle 
= q_{\ast} (i! \hat{\otimes} ([[\phi ]] \hat{\otimes} [D])) = \ind (T_{\phi |_{N}})$, 
which is a statement of the main theorem for $M = \mathbb{R} \times N$. 

This formal argument is correct only if $\phi$ 
is an element in $GL_{l}(C_{0}(M))$ since 
the above $KK$ groups are defined as 
$KK^{1}(M,\mathrm{pt}) = KK^{1}(C_{0}(M),\mathbb{C})$, for instance. 
However, if $\phi$ were chosen as an element in $GL_{l}(C_{0}(M))$, 
$\phi$ should take a constant value outside a compact set of $M$. 
This implies that $\phi |_{N}$ is homotopic to a constant function in $GL_{l}(C(N))$
and thus 
$\ind (T_{\phi |_{N}})$ should vanish. 
Therefore, 
in order to obtain non-trivial index, 
we have to employ a larger algebra than $C_{0}(M)$. 

Higson \cite{Higsonalg} introduced such a $C^{\ast}$-algebra $C_{h}(M)$ that contains $C_{0}(M)$, 
which is now called the Higson algebra. 
It plays an important role in a $K$-homological proof of the Roe-Higson index theorem. 
The Higson algebra is defined as follows: 
$C_{h}(M)$ is the $C^{\ast}$-algebra 
generated by all smooth and bounded functions defined on $M$ 
of which gradient is vanishing at infinity \cite[p.26]{Higsonalg}. 
$C_{h}(M)$ contains $C_{0}(M)$ as an ideal and 
is contained in $C_{\mathrm{w}}(M)$ by definition. 
Given $\psi \in C^{\infty}(N)$, 
we note that $\phi = 1 \otimes \psi$ does not belong to $C_{h}(M)$ in general. 
Thus the Higson algebra is not large enough to prove our main theorem. 
On the other hand, we have $\phi \in C_{\mathrm{w}}(M)$.  
Moreover, $C_{\mathrm{w}}(M)$ is the largest $C^{\ast}$-algebra $A$ for which we can define 
$[D]$ as an element in $KK^{0}(A , C^{\ast}(M))$. 
They are reasons why we introduced the $C^{\ast}$-algebra $C_{\mathrm{w}}(M)$ in our main theorem. 

%
%

This paper contains more general method than that of 
author's previous paper \cite{Setotwodim}, 
which proves the case when two dimension 
by elementary method 
and contains a non-trivial example. 

\setcounter{tocdepth}{1}

%
\section{Preliminaries}
%

\subsection{Partitioned manifolds}

We firstly describe a partitioned manifold, 
which is a main object in our main theorem. 

\begin{dfn}
\label{dfn:partitioned}
Let $M$ be an oriented complete Riemannian manifold. 
Assume that the triple $(M^{+}, M^{-}, N)$ satisfies the following conditions: 
\begin{itemize}
\item $M^{+}$ and $M^{-}$ are submanifolds of $M$ of the same dimension as $M$, 
	$\partial M^{+} \neq \emptyset$ and $\partial M^{-} \neq \emptyset$, 
\item $M = M^{+} \cup M^{-}$, 
\item $N$ is a closed submanifold of $M$ of codimension one, 
\item $N = M^{+} \cap M^{-} = -\partial M^{+} = \partial M^{-}$.
\end{itemize}
Then we call $(M^{+}, M^{-}, N)$ a partition of $M$. 
$M$ is also called a partitioned manifold. 
\end{dfn}

\begin{figure}[h]
\vspace*{-1.5\baselineskip}
 \begin{center}
  \includegraphics[width=60mm]{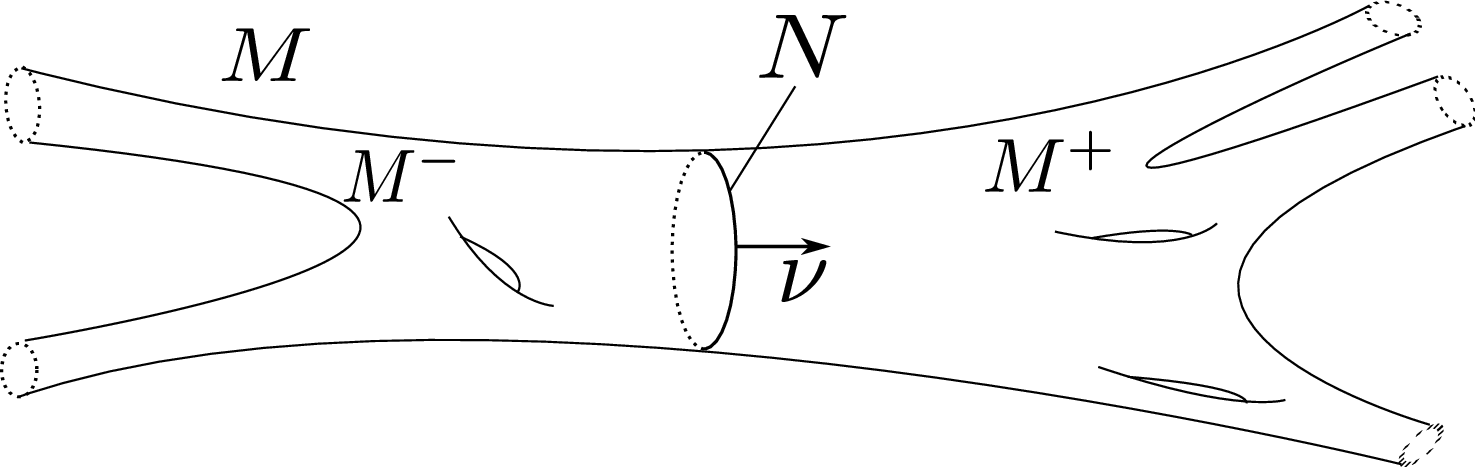}
  \caption{Partitioned manifold}
  \label{fig:integral}
 \end{center}
\end{figure}

For example, we can consider $\mathbb{R} \times N$ is partitioned by 
$(\mathbb{R}_{+} \times N , \mathbb{R}_{-} \times N , \{ 0 \} \times N)$, 
where we set $\mathbb{R}_{+} = \{ t \in \mathbb{R} \,;\, t \geq 0 \}$ 
and $\mathbb{R}_{-} = \{ t \in \mathbb{R} \,;\, t \leq 0 \}$. 

We fix the notation of two functions which are defined by a partition. 

\begin{dfn}
Assume that $M$ is partitioned by $(M^{+}, M^{-}, N)$. 
Denote by $\Pi$ the characteristic function of $M^{+}$ 
and set $\Lambda = 2\Pi - 1$. 
\end{dfn}

\subsection{The Roe algebra}

In this subsection we recall the definition of the Roe algebra $C^{\ast}(M)$ 
and describe some properties of it. 

\begin{dfn}\cite[p.191]{MR996446} 
Let $M$ be a complete Riemannian manifold and 
$S \to M$ a Hermitian vector bundle. 
Denote by $L^{2}(S)$ the $L^{2}$-sections of $S$ and 
$\mathcal{L}(L^{2}(S))$ the bounded operators on $L^{2}(S)$. 
Denote by $\mathscr{X}$ the 
algebra of bounded integral operators on $L^{2}(S)$ 
which have smooth kernels and finite propagation. 
Denote by $C^{\ast}(M)$ the closure of $\mathscr{X}$ 
and call it 
the Roe algebra. 
\end{dfn}

We collect some properties of the Roe algebra which we shall need. 
Let $C_{0}(M)$ be the $C^{\ast}$-algebra of all continuous functions on $M$ vanishing at infinity. 

\begin{prp}\cite{MR1817560,MR996446,MR1147350} 
\label{prp:list}
Let $M$ be a complete Riemannian manifold and $S \to M$ a Hermitian vector bundle. 
Denote by $\mathcal{D}^{\ast}(M)$ 
the $\ast$-subalgebra of $\mathcal{L}(L^{2}(S))$ which contains pseudolocal operators with
finite propagation, where $T \in \mathcal{L}(L^{2}(S))$ is pseudolocal 
if $[f, T] \sim 0$ for all $f \in C_{0}(M)$, 
that is, $[f, T]$ is a compact operator. 
Denote by $D^{\ast}(M)$ the closure of $\mathcal{D}^{\ast}(M)$. 
Then the following holds: 
\begin{enumerate}[$(i)$]
\item $D^{\ast}(M)$ is a unital $C^{\ast}$-algebra. 
\item For all $u \in C^{\ast}(M)$ and $f \in C_{0}(M)$, one has $uf \sim 0$ and $fu \sim 0$. 
\item $C^{\ast}(M)$ is equal to the closure of 
	$\{ u \in \mathcal{L}(L^{2}(S)) \,;\, \text{finite propagation and } uf \sim 0 \text{ and } fu \sim 0 
	\text{ for all } f \in C_{0}(M)  \}$. 
\item $C^{\ast}(M)$ is a closed $\ast$-bisided ideal in $D^{\ast}(M)$. 
\item Let $D$ be a self-adjoint first order elliptic differential operator 
	with finite propagation. 
	Then one has $f(D) \in C^{\ast}(M)$ for all $f \in C_{0}(\mathbb{R})$ and 
	$\chi (D) \in D^{\ast}(M)$ for any chopping function $\chi \in C(\mathbb{R} ; [-1,1])$. 
	Here $\chi \in C(\mathbb{R} ; [-1,1])$ is a chopping function if  
	$\chi$ is an odd function and $\lim_{x \to  \infty} \chi (x) = 1$. 
\end{enumerate}
\end{prp}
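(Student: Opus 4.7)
The plan is to prove the five items in order, using the definitions of finite propagation and pseudolocality for (i)--(ii) and density/approximation arguments together with the finite propagation speed of the wave operator for (iii)--(v).

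For (i), the algebra $\mathcal{D}^{\ast}(M)$ contains the identity (zero propagation, trivially pseudolocal) and is closed under sums, products, and adjoints: finite propagation is preserved since propagations add under composition, and pseudolocality under composition follows from the Leibniz identity $[f, T_{1}T_{2}] = [f, T_{1}]T_{2} + T_{1}[f, T_{2}]$ together with the two-sided ideal property of $\mathcal{K}(L^{2}(S))$, while adjoints are handled via $[f, T^{\ast}] = -[\bar{f}, T]^{\ast}$. Closing in the operator norm yields the unital $C^{\ast}$-algebra.

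For (ii), if $u \in \mathscr{X}$ has smooth kernel $k(x,y)$ supported in the tube $\{d(x,y) \le R\}$ and $f \in C_{0}(M)$, then $uf$ has kernel $k(x,y)f(y)$, which one checks is Hilbert-Schmidt (hence compact) by integrating over the tube and using that $f$ vanishes at infinity; the result extends to all $u \in C^{\ast}(M)$ by density and closedness of $\mathcal{K}$. For (iii), the inclusion $\supseteq$ is immediate from (ii); for $\subseteq$ one approximates a general locally-compact finite-propagation operator by smoothing operators using a partition of unity subordinate to a bounded cover and mollification of the kernel, controlling both the propagation increase and the norm error. For (iv), given $a \in D^{\ast}(M)$ and $u \in C^{\ast}(M)$, the product $au$ has finite propagation, and for $f \in C_{0}(M)$ one writes $fau = a(fu) + [f, a]u$ with each summand compact by (ii) and pseudolocality respectively; the right action is symmetric, and (iii) then places $au$ in $C^{\ast}(M)$.

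For (v), one invokes the unit propagation speed of $e^{itD}$: for $g \in C_{0}(\mathbb{R})$ with compactly supported Fourier transform $\hat{g}$, Fourier inversion $g(D) = \frac{1}{2\pi}\int \hat{g}(t)\, e^{itD}\, dt$ shows $g(D)$ has finite propagation, and local compactness combined with (iii) gives $g(D) \in C^{\ast}(M)$; density of such $g$ in $C_{0}(\mathbb{R})$ yields the full statement for $f \in C_{0}(\mathbb{R})$. For a chopping function $\chi$, one writes $\chi = g + \chi_{\infty}$ with $g \in C_{0}(\mathbb{R})$ and $\chi_{\infty}$ a bounded smooth function of the prescribed asymptotic type (e.g.\ $x/\sqrt{1+x^{2}}$), then infers $\chi(D) \in D^{\ast}(M)$ by checking pseudolocality via commutator estimates. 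The main obstacle I expect is (iii): producing a sequence of smoothing operators of bounded propagation that converges to a general locally-compact finite-propagation operator requires a careful mollification preserving both properties simultaneously, and this is the technical heart around which the rest of the proposition is organised.
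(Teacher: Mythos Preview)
The paper does not give its own proof of this proposition; it is stated with citations to Roe and Higson--Roe and used as background. Your outline is essentially the standard argument found in those references and is correct in structure.

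One point deserves correction. In (ii) you assert that for $u \in \mathscr{X}$ with smooth finite-propagation kernel $k(x,y)$ and $f \in C_{0}(M)$, the kernel $k(x,y)f(y)$ is Hilbert--Schmidt. This is false in general: on $M = \mathbb{R}$ with a translation-invariant bump kernel $k(x,y) = \rho(x-y)$ and $f(y) = (1+|y|)^{-1/3} \in C_{0}(\mathbb{R}) \setminus L^{2}(\mathbb{R})$, one has $\iint |k(x,y)f(y)|^{2}\,dx\,dy = \|\rho\|_{2}^{2}\,\|f\|_{2}^{2} = +\infty$. The compactness conclusion is nevertheless correct, and the fix is the obvious one: first take $f \in C_{c}(M)$, so that the kernel $k(x,y)f(y)$ is smooth with compact support (hence trace class), then pass to general $f \in C_{0}(M)$ by uniform approximation and the closedness of $\mathcal{K}(L^{2}(S))$.

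Your identification of (iii) as the technical core is accurate; in Roe's references it is handled via a mollification/parametrix argument along the lines you indicate, and everything you write for (i), (iv), (v) matches the standard treatment.
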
 

Moreover, we assume $M$ is a partitioned manifold. 
Then we can get the following properties. 

\begin{prp}
\label{prp:loccpt}
If $M$ is a partitioned manifold, then the following holds: 
\begin{enumerate}[$(i)$]
\item For all $u \in C^{\ast}(M)$, one has $[\Pi , u] \sim 0$ and $[\Lambda , u] \sim 0$. 
\item For all $u \in C^{\ast}(M)$ and 
		$\varphi \in C(M)$ satisfies $\varphi = \Pi$ on the complement of 
		a compact set in $M$, one has $[\varphi , u] \sim 0$ . 
\end{enumerate}
\end{prp}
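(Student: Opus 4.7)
The plan is to prove (i) by reducing to a kernel computation on the dense subalgebra $\mathscr{X} \subset C^{\ast}(M)$, and then to deduce (ii) from (i) via a compactly supported correction. Since $u \mapsto [\Pi, u]$ is bounded on $\mathcal{L}(L^{2}(S))$ and the compact operators are norm-closed, for (i) it suffices to treat $u_{0} \in \mathscr{X}$ and then pass to the norm limit. For such $u_{0}$ with smooth kernel $k(x, y)$ supported in $\{d(x, y) \leq R\}$, the commutator $[\Pi, u_{0}]$ has integral kernel $(\Pi(x) - \Pi(y))\, k(x, y)$; this vanishes unless $x$ and $y$ lie on opposite sides of $N$ and $d(x, y) \leq R$, which forces both points into the $R$-tube $A_{R} = \{z \in M : d(z, N) \leq R\}$ around $N$. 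Since $N$ is a compact hypersurface and $M$ is complete, Hopf--Rinow makes $A_{R}$ compact, so the kernel is bounded with compact support, whence $[\Pi, u_{0}]$ is Hilbert--Schmidt and in particular compact. Norm-approximation then gives $[\Pi, u] \sim 0$ for every $u \in C^{\ast}(M)$, and $[\Lambda, u] = 2[\Pi, u] \sim 0$ is immediate.

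For (ii), decompose $[\varphi, u] = [\Pi, u] + [\varphi - \Pi, u]$; the first term is compact by (i). For the second, $\varphi - \Pi$ is bounded and measurable with compact support $K$. Choose $f \in C_{c}(M)$ with $f \equiv 1$ on $K$, so that $M_{\varphi - \Pi} = M_{\varphi - \Pi} M_{f}$ as multiplication operators. Then $M_{\varphi - \Pi} u = M_{\varphi - \Pi}(fu)$, which is compact because $fu \sim 0$ by Proposition~\ref{prp:list}(ii) and a bounded operator preserves compactness; the analogous argument handles $u M_{\varphi - \Pi}$. Hence $[\varphi - \Pi, u] \sim 0$, and therefore $[\varphi, u] \sim 0$.

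The only substantive point is the compactness of $[\Pi, u_{0}]$ in part (i); this in turn rests on the geometric fact that the $R$-tube around the hypersurface $N$ is compact, together with the identification of the commutator kernel. Once this is in place, the density argument for (i) and the deduction of (ii) from (i) are routine algebra and approximation.
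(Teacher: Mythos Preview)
Your proof is correct and follows essentially the same approach as the paper: for (i) you spell out the kernel argument (showing $[\Pi,u_{0}]$ is Hilbert--Schmidt for $u_{0}\in\mathscr{X}$ via compact support of the commutator kernel in the $R$-tube around $N$) that the paper simply cites from \cite[Lemma~1.5]{MR996446}, and then pass to the closure; for (ii) both you and the paper factor $\varphi-\Pi$ through a compactly supported $f\in C_{0}(M)$ and invoke Proposition~\ref{prp:list}(ii). The only cosmetic difference is that the cited lemma gives trace class rather than Hilbert--Schmidt, but compactness is all that is needed here.
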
 

\begin{proof}
Due to \cite[Lemma 1.5]{MR996446}, 
$[\Pi , u]$ is of trace class for all $u \in \mathscr{X}$. 
So (i) is proved by the definition of $C^{\ast}(M)$. 
Since the support of $\Pi - \varphi$ is compact, 
there exists $f \in C_{0}(M)$ such that $f (\Pi - \varphi) = (\Pi - \varphi)f = \Pi - \varphi$. 
Therefore, we get (ii). 
\end{proof}

\subsection{The Roe cocycle}

We define a certain cyclic $1$-cocycle on $\mathscr{X}$, which is called the Roe cocycle. 

\begin{dfn}
\label{dfn:zeta}
For any $A,B \in \mathscr{X}$, set 
\begin{equation*}
\label{eq:zeta}
\zeta (A,B) = \frac{1}{4}\mathrm{Tr}(\Lambda [\Lambda , A][\Lambda , B]). 
\end{equation*}
We call $\zeta : \mathscr{X} \times \mathscr{X} \to \mathbb{C}$ the Roe cocycle. 
\end{dfn}

\begin{prp}
\cite[Proposition 1.6]{MR996446} 
$\zeta$ is a cyclic $1$-cocycle on $\mathscr{X}$. 
\end{prp}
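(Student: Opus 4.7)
The plan is to verify the three defining properties of a cyclic one-cocycle directly from the definition. The entire argument rests on a short list of algebraic facts about $\Lambda$: $\Lambda^{2}=1$ (since $\Lambda = 2\Pi-1$ and $\Pi^{2}=\Pi$); the Leibniz rule $[\Lambda, AB] = [\Lambda,A]B + A[\Lambda,B]$; the anticommutation identity $\Lambda[\Lambda,A] = -[\Lambda,A]\Lambda$ (immediate from $\Lambda^{2}=1$); and the trace-class property $[\Lambda,A]\in\mathcal{L}^{1}(L^{2}(S))$ for $A\in\mathscr{X}$, which is Lemma~1.5 of \cite{MR996446} and was already invoked in the proof of Proposition~\ref{prp:loccpt}.

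For well-definedness, the last fact implies that $\Lambda[\Lambda,A][\Lambda,B]$ is a product of a trace-class operator and two bounded operators, hence trace class, so $\zeta(A,B)\in\mathbb{C}$ is meaningful.

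For the skew-symmetry $\zeta(B,A)=-\zeta(A,B)$ required by the cyclic condition at degree one, I would apply trace-cyclicity to rewrite $\mathrm{Tr}(\Lambda[\Lambda,B][\Lambda,A])=\mathrm{Tr}([\Lambda,A]\Lambda[\Lambda,B])$ and then use the anticommutation identity to flip the central $\Lambda$ past $[\Lambda,A]$, picking up the needed minus sign.

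For the Hochschild cocycle identity $\zeta(AB,C)-\zeta(A,BC)+\zeta(CA,B)=0$, I would expand each commutator on a product via Leibniz. Two copies of the term $\mathrm{Tr}(\Lambda[\Lambda,A]B[\Lambda,C])$ arise with opposite signs and cancel immediately. The four remaining traces have the shape $\mathrm{Tr}(\Lambda X[\Lambda,Y][\Lambda,Z])$ with $(X,Y,Z)$ a cyclic shift of $(A,B,C)$; using trace-cyclicity together with the anticommutation identity — which produces a trivial sign when $\Lambda$ is pushed past two commutator factors — one sees that the four traces pair off and cancel. The only real obstacle is sign-bookkeeping in this last step; no further analytic input is required beyond the trace-class fact already quoted.
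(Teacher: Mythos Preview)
Your proposal is correct: the trace-class fact from \cite{MR996446}, the anticommutation $\Lambda[\Lambda,A]=-[\Lambda,A]\Lambda$, and trace cyclicity are exactly what is needed, and your sketch of the Hochschild computation goes through (the minor caveat is that two of the four residual terms are not literally of the form $\mathrm{Tr}(\Lambda X[\Lambda,Y][\Lambda,Z])$ before you manipulate them, but your indicated moves bring them there). The paper itself gives no proof of this proposition---it simply cites Roe's original \cite[Proposition~1.6]{MR996446}---so there is nothing to compare against beyond noting that your argument is the standard direct verification.
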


In our main theorem, we would like to take the pairing of $\zeta$ with 
the index class in $K_{1}(C^{\ast}(M))$. 
For this purpose, we have to extend a domain of $\zeta$. 

\begin{dfn}
\label{dfn:A}
Let $M$ be a partitioned manifold and $S \to M$ a Hermitian vector bundle. 
Then we define a subalgebra $\mathscr{A}$ in $C^{\ast}(M)$ 
such that one has $u \in \mathscr{A}$ if $[\Lambda , u]$ is of trace class. 
\end{dfn}

We note that 
$\mathscr{A}$ is a Banach algebra with norm $\| u \|_{\mathscr{A}} = \| u \| + \| [\Lambda , u] \|_{1}$, 
where $\| \cdot \|$ is the operator norm and $\| \cdot \|_{1}$ is the trace norm. 

\begin{prp}
\label{prp:Ahol}
Let $M$ be a partitioned manifold and $S \to M$ a Hermitian vector bundle. 
Then $\mathscr{A}$ is dense and closed under holomorphic functional calculus in $C^{\ast}(M)$. 
\end{prp}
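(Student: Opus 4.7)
The plan is to verify the two statements separately. For density, I would observe that the subalgebra $\mathscr{X}$ of smoothing operators with finite propagation is already contained in $\mathscr{A}$: by \cite[Lemma 1.5]{MR996446}, which was already invoked in the proof of Proposition \ref{prp:loccpt}, $[\Pi , u]$ is of trace class for every $u \in \mathscr{X}$, and since $\Lambda = 2\Pi - 1$ we get $[\Lambda , u] = 2 [\Pi , u]$, which is also of trace class. Because $\mathscr{X}$ is dense in $C^{\ast}(M)$ by definition, this gives density of $\mathscr{A}$.

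For the holomorphic functional calculus, I would appeal to the standard criterion: a dense Banach subalgebra of a $C^{\ast}$-algebra is stable under holomorphic functional calculus as soon as it is spectrally invariant, i.e. closed under inversion in the unitization. So the task reduces to showing: if $u \in \mathscr{A}$ and $\lambda - u$ is invertible in $C^{\ast}(M)^{+}$ with inverse $v \in C^{\ast}(M)^{+}$, then $v - (\text{scalar part}) \in \mathscr{A}$. The key computation is the commutator identity
\[
  [\Lambda , v] \;=\; v \, [\Lambda , u] \, v ,
\]
obtained by differentiating the relation $(\lambda - u) v = 1$: from $[\Lambda , (\lambda - u) v] = 0$ one gets $(\lambda - u) [\Lambda , v] = [\Lambda , u] v$, and multiplying on the left by $v$ yields the identity. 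Since $v$ is bounded and $[\Lambda , u]$ is of trace class, the right hand side is of trace class, so $v$ lies in $\mathscr{A}^{+}$.

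Along the way I would record that $(\mathscr{A} , \| \cdot \|_{\mathscr{A}})$ is a Banach $\ast$-algebra: the submultiplicativity of $\| \cdot \|_{\mathscr{A}}$ follows from the Leibniz rule $[\Lambda , uv] = [\Lambda , u] v + u [\Lambda , v]$ together with the standard estimates $\| ab \|_{1} \le \| a \|_{1} \| b \|$ and $\| ab \|_{1} \le \| a \| \| b \|_{1}$; completeness is straightforward because if $(u_{n})$ is $\| \cdot \|_{\mathscr{A}}$-Cauchy then $u_{n} \to u$ in operator norm (so $[\Lambda , u_{n}] \to [\Lambda , u]$ in operator norm) while $([\Lambda , u_{n}])$ converges in trace norm, forcing the limit to coincide with $[\Lambda , u]$. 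The inclusion $\mathscr{A} \hookrightarrow C^{\ast}(M)$ is obviously continuous.

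I do not expect a genuine obstacle here; the only technical point to keep straight is the bookkeeping of the unitization (so that the commutator identity, which lives in $\mathcal{L}(L^{2}(S))$, really implies the trace-class condition for the non-scalar part of $v$). Once that is handled, everything reduces to the standard ``spectral permanence implies holomorphic stability'' argument.
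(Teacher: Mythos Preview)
Your proposal is correct and follows essentially the same approach as the paper. The paper's proof is terser: for density it also uses the chain $\mathscr{X} \subset \mathscr{A} \subset C^{\ast}(M)$, and for holomorphic stability it simply invokes Connes' general result \cite[p.~92, Proposition~3]{MR823176} (together with the compactness of $[\Lambda , u]$ for $u \in C^{\ast}(M)$ from Proposition~\ref{prp:loccpt}), whereas you spell out the underlying commutator identity $[\Lambda , v] = v[\Lambda , u]v$ that proves that proposition in this case.
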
 

\begin{proof}
Since $\mathscr{X} \subset \mathscr{A} \subset C^{\ast}(M)$, $\mathscr{A}$ is dense in $C^{\ast}(M)$. 
By \cite[p.92 Proposition 3]{MR823176} 
and Proposition \ref{prp:loccpt} (ii), 
$\mathscr{A}$ is closed under holomorphic functional calculus in $C^{\ast}(M)$. 
\end{proof}

\begin{rmk}
Due to Definition \ref{dfn:A}, we can extend a domain of $\zeta$ 
to $\mathscr{A}^{+} = \mathscr{A} \oplus \mathbb{C}$. 
\end{rmk}

\subsection{Pairing of the Roe cocycle with an element in $K_{1}$-group}

For any Banach algebra $A$, set $A^{+} = A \oplus \mathbb{C}$ the adjoining a unit. 
Denote by $GL_{l}(A)$ 
the set of invertible elements $u$ in $M_{l}(A^{+})$ such that we have $u - 1 \in M_{l}(A)$. 
Set $K_{1}(A) = \pi_{0}(GL_{\infty}(A))$, the $K_{1}$-group of $A$. 
In this subsection, we describe Connes' pairing of the Roe cocycle with an element in  $K_{1}(C^{\ast}(M))$.

\begin{prp}
\label{prp:induiso}
Let $M$ be a partitioned manifold and $S \to M$ be a Hermitian vector bundle. 
Then the inclusion $i : \mathscr{A} \to C^{\ast}(M)$ induces 
an isomorphism $i_{\ast} : K_{1}(\mathscr{A}) \cong K_{1}(C^{\ast}(M))$. 
\end{prp}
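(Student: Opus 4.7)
The plan is to reduce to the standard density theorem in $K$-theory, which says that if $\mathscr{A}$ is a dense subalgebra of a Banach algebra $B$, carries its own Banach algebra norm for which the inclusion into $B$ is continuous, and is closed under holomorphic functional calculus, then $i_{\ast}: K_{j}(\mathscr{A}) \to K_{j}(B)$ is an isomorphism for $j = 0,1$. All of these hypotheses have already been provided in the excerpt: the remark preceding Proposition \ref{prp:Ahol} equips $\mathscr{A}$ with the Banach algebra norm $\|u\|_{\mathscr{A}} = \|u\| + \|[\Lambda, u]\|_{1}$, which obviously dominates the $C^{\ast}$-norm; and Proposition \ref{prp:Ahol} supplies density of $\mathscr{A}$ in $C^{\ast}(M)$ together with closure under holomorphic functional calculus.

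For a self-contained argument, I would carry out the two standard steps. \textbf{Surjectivity.} Given $u \in GL_{l}(C^{\ast}(M)^{+})$ with $u - 1 \in M_{l}(C^{\ast}(M))$, density lets us pick $a \in M_{l}(\mathscr{A})$ with $\|a - (u-1)\|$ arbitrarily small in the $C^{\ast}$-norm. For $a$ sufficiently close, $1 + a$ is invertible in $M_{l}(C^{\ast}(M)^{+})$ and the straight-line homotopy $t \mapsto (1-t)(1+a) + tu$ remains in $GL_{l}(C^{\ast}(M)^{+})$, so $[u] = [1+a]$ in $K_{1}(C^{\ast}(M))$. Closure of $\mathscr{A}$ under holomorphic functional calculus then upgrades invertibility of $1+a$ from $C^{\ast}(M)^{+}$ to $\mathscr{A}^{+}$, giving $[1+a] \in K_{1}(\mathscr{A})$ that maps to $[u]$. \textbf{Injectivity.} If $v \in GL_{l}(\mathscr{A}^{+})$ becomes null-homotopic in $GL_{\infty}(C^{\ast}(M)^{+})$, I would subdivide a witnessing path into small arcs, approximate each arc by a piecewise-linear path whose vertices lie in $M_{l}(\mathscr{A})$ (using the same density plus holomorphic functional calculus trick at each vertex), and assemble a null-homotopy of $v$ inside $GL_{\infty}(\mathscr{A}^{+})$.

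The only real obstacle is a technical one: to upgrade the vertex $1 + a \in M_{l}(\mathscr{A}^{+})$, invertible in $C^{\ast}(M)^{+}$, to an element of $GL_{l}(\mathscr{A}^{+})$, one must know that the inverse taken in $C^{\ast}(M)^{+}$ actually lies in $M_{l}(\mathscr{A}^{+})$. This is exactly what closure under holomorphic functional calculus delivers, since $z \mapsto 1/z$ is holomorphic on a neighborhood of the spectrum. Given Proposition \ref{prp:Ahol}, the conclusion then follows immediately from the density theorem, which can be cited from standard references (for instance the version due to Bost, or Connes, \cite{MR823176}).
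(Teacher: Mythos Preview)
Your proposal is correct and follows exactly the same route as the paper: the paper's proof simply invokes Proposition~\ref{prp:Ahol} together with the density theorem from \cite[p.~92, Proposition~3]{MR823176}, which is precisely the reduction you describe. Your additional sketch of the surjectivity and injectivity arguments is a faithful unpacking of that cited result, not a different method.
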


\begin{proof}
Use Proposition \ref{prp:Ahol} and \cite[p.92 Proposition 3]{MR823176}. 
\end{proof}

Due to Proposition \ref{prp:induiso}, 
we can take the pairing of the Roe cocycle with an element in $K_{1}(C^{\ast}(M))$ through the isomorphism 
$i_{\ast} : K_{1}(\mathscr{A}) \cong K_{1}(C^{\ast}(M))$ as follows: 

\begin{dfn}
\cite[p.109]{MR823176} 
Define the map 
\[ \langle \cdot , \zeta \rangle : K_{1}(C^{\ast}(M)) \to \mathbb{C} \]
by $\langle [u] , \zeta \rangle = \frac{1}{8\pi i}\sum_{i,j}\zeta ((u^{-1})_{ji}, u_{ij})$, 
where we assume $[u]$ is represented by an element $u \in GL_{l}(\mathscr{A})$
and $u_{ij}$ is the $(i,j)$-component of $u$. 
We note that this is Connes' pairing of cyclic cohomology with $K$-theory, 
and $1/8\pi i$ is a constant multiple in Connes' pairing. 
\end{dfn}

We can write its pairing by a Fredholm index. 

\begin{prp}
\label{prp:paind}
For any $u \in GL_{l}(C^{\ast}(M))$, one has 
\[ \langle [u] , \zeta \rangle 
	= -\frac{1}{8 \pi i}\ind (\Pi u \Pi : \Pi (L^{2}(S))^{l} \to \Pi (L^{2}(S))^{l}). \]
\end{prp}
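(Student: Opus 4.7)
The plan is to choose, via the isomorphism $i_\ast : K_1(\mathscr{A}) \cong K_1(C^\ast(M))$ of Proposition \ref{prp:induiso}, a representative $u \in GL_l(\mathscr{A})$ of the class $[u]$, so that $[\Lambda, u]$ and $[\Lambda, u^{-1}]$ — equivalently $[\Pi, u]$ and $[\Pi, u^{-1}]$ — are of trace class. Since both sides of the claimed identity are homotopy invariants of $[u] \in K_1(C^\ast(M))$ (Fredholmness of $\Pi u \Pi$ on $\Pi(L^2(S))^l$ follows from the compactness of $[\Pi, u]$ via Proposition \ref{prp:loccpt}), no generality is lost by this reduction.

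First, I would rewrite the Roe cocycle in terms of $\Pi$. Since $\Lambda = 2\Pi - 1$ and $[\Lambda, \cdot] = 2[\Pi, \cdot]$, we have $\zeta(A,B) = \mathrm{Tr}((2\Pi-1)[\Pi,A][\Pi,B])$, and decomposing $A$ and $B$ into $2 \times 2$ blocks along $\Pi$ gives
\[ \zeta(A,B) = \mathrm{Tr}((1-\Pi)A\Pi B) - \mathrm{Tr}(\Pi A(1-\Pi)B). \]
Writing $P_l$ for the diagonal amplification of $\Pi$ on $(L^2(S))^l$ and $Q_l = 1 - P_l$, summing over matrix entries and using cyclicity of the trace yields
\[ \sum_{i,j}\zeta((u^{-1})_{ji}, u_{ij}) = \mathrm{Tr}(Q_l u^{-1} P_l u) - \mathrm{Tr}(P_l u^{-1} Q_l u). \]

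Next, I would recognize the right-hand side as a Fredholm index. Set $T = P_l u P_l$ acting on $P_l(L^2(S))^l$, with parametrix $S = P_l u^{-1} P_l$. An elementary calculation gives $I - TS = P_l u Q_l u^{-1} P_l$ and $I - ST = P_l u^{-1} Q_l u P_l$ on $P_l(L^2(S))^l$, both trace class by the choice of $u$. The standard Calder\'on formula $\ind(T) = \mathrm{Tr}(I - ST) - \mathrm{Tr}(I - TS)$, after one more use of cyclicity, becomes
\[ \ind(T) = \mathrm{Tr}(P_l u^{-1} Q_l u) - \mathrm{Tr}(Q_l u^{-1} P_l u) = -\sum_{i,j}\zeta((u^{-1})_{ji}, u_{ij}), \]
so dividing by $8\pi i$ yields $\langle [u], \zeta \rangle = -\frac{1}{8\pi i}\ind(\Pi u \Pi)$.

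The main obstacle is bookkeeping rather than conceptual: I need trace-class membership at every step and a consistent passage between the sum over matrix entries $(u_{ij})$ and traces on $(L^2(S))^l$. The former rests on the choice $u \in GL_l(\mathscr{A})$, without which the intermediate expressions are only formal; the latter I would verify by writing out $u$, $u^{-1}$ as $l \times l$ operator matrices and computing $(uXu^{-1})_{ii}$ for diagonal $X$ in coordinates.
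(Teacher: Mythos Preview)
Your proof is correct and follows essentially the same route as the paper: reduce to a representative $u\in GL_l(\mathscr{A})$ by homotopy invariance, then identify the cocycle pairing with the Calder\'on trace formula $\ind(T)=\mathrm{Tr}(I-ST)-\mathrm{Tr}(I-TS)$ for $T=\Pi u\Pi$, $S=\Pi u^{-1}\Pi$. The only difference is cosmetic: the paper keeps the commutator form and uses the identity $\Pi-\Pi u^{-1}\Pi u\Pi=-\Pi[\Pi,u^{-1}][\Pi,u]\Pi$, whereas you expand $\zeta$ into the two off-diagonal block traces $\mathrm{Tr}((1-\Pi)A\Pi B)-\mathrm{Tr}(\Pi A(1-\Pi)B)$ first; both unpack the same block decomposition.
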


\begin{proof}
Since both sides of this equation do not change by homotopy of $u \in GL_{l}(C^{\ast}(M))$, 
it suffices to show the case when $u \in GL_{l}(\mathscr{A})$. 
Then we obtain 
\begin{equation*}
 8 \pi i \langle [u] , \zeta \rangle
= \frac{1}{4}\sum_{i,j}\mathrm{Tr}(\Lambda [\Lambda , (u^{-1})_{ij}][\Lambda , u_{ji}]) 
= \frac{1}{4}\mathrm{Tr}(\Lambda [\Lambda , u^{-1}][\Lambda , u]). 
\end{equation*}

Due to an equality 
$
\Pi - \Pi u^{-1} \Pi u \Pi = - \Pi [\Pi , u^{-1}][\Pi , u]\Pi 
$, 
these two operators 
$\Pi - \Pi u^{-1} \Pi u \Pi$ and $\Pi - \Pi u \Pi u^{-1} \Pi$ 
are of trace class on $\Pi (L^{2}(S))^{l}$. 
Thus we get
\[ \ind (\Pi u \Pi : \Pi (L^{2}(S))^{l} \to \Pi (L^{2}(S))^{l}) 
= \mathrm{Tr}(\Pi - \Pi u^{-1} \Pi u \Pi )
	- \mathrm{Tr}(\Pi - \Pi u \Pi u^{-1} \Pi) \]
by
\cite[p.88]{MR823176}. 
The above calculations implies desired equality. 
\end{proof}

\subsection{Toeplitz operators}

We recall the definition of Toeplitz operators for Dirac operators 
and its index theorem. 
The Fredholm index of the Toeplitz operator appears in our main theorem. 

\begin{dfn}
\label{dfn:Toeplitz}
Let $N$ be a closed Riemannian manifold. 
Let $S_{N} \to N$ be a Clifford bundle 
in the sense of \cite[Definition 3.4]{MR1670907} 
and 
$D_{N}$ the Dirac operator of $S_{N}$. 
Denote by $H_{+}$ the subspace of $L^{2}(S_{N})$ 
generated by non-negative eigenvectors of $D_{N}$ and 
let $P : L^{2}(S_{N}) \to H_{+}$ be the projection. 

Let $\phi \in C(N \,;\, M_{l}(\mathbb{C}))$ be a continuous map 
from $N$ to $M_{l}(\mathbb{C})$. 
Then for any $s \in H_{+}^{l}$, we define the Toeplitz operator 
$T_{\phi} : H_{+}^{l} \to H_{+}^{l}$ by $T_{\phi}s = P(\phi s)$. 
\end{dfn}

Toeplitz operators are Fredholm when 
the range of $\phi$ is contained in the set of invertible matrices. 

\begin{prp}\cite[Lemma 2.10]{MR669904} 
Assume that $\phi$ is a smooth map, then
$[\phi , P]$ is a pseudodifferential operator of order $-1$. 
Therefore $[\phi , P]$ is a compact operator on $L^{2}(S_{N})^{l}$ for all 
$\phi \in C(N \,;\, M_{l}(\mathbb{C}))$. 
This implies a Toeplitz operator $T_{\phi}$ is a Fredholm operator for $\phi \in C(N \,;\, GL_{l}(\mathbb{C}))$. 
\end{prp}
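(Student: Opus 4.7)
The real content of the proposition is concentrated in the first clause; the other two follow from standard approximation and Atkinson-type arguments. For the claim that $[\phi, P]$ is a pseudodifferential operator ($\Psi$DO) of order $-1$ when $\phi$ is smooth, I would first express $P$ via functional calculus as $P = \chi(D_{N})$, where $\chi$ is the characteristic function of $[0, \infty)$. Smoothing $\chi$ near the origin changes $P$ only by an operator of the form $\eta(D_{N})$ with $\eta \in C_{c}(\mathbb{R})$, which is smoothing on the closed manifold $N$ and hence harmless. Standard pseudodifferential functional calculus (Seeley) then identifies $P$ as a classical $\Psi$DO of order $0$ whose principal symbol $p(x, \xi)$ is the spectral projection of $\sigma_{D_{N}}(x, \xi)$ onto $[0, \infty)$. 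A smooth $M_{l}(\mathbb{C})$-valued multiplication operator $\phi$, viewed as acting on $L^{2}(S_{N})^{l} = L^{2}(S_{N} \otimes \mathbb{C}^{l})$, is a $\Psi$DO of order $0$ with principal symbol $\phi(x) \otimes \mathrm{id}_{S_{N}}$. Since $\phi(x)$ and $p(x, \xi)$ act on different tensor factors of $S_{N} \otimes \mathbb{C}^{l}$, their pointwise commutator vanishes, so the principal symbol of $[\phi, P]$ is zero and the order drops to $-1$; any $\Psi$DO of order $\leq -1$ on a closed manifold is compact on $L^{2}$ by Rellich.

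Given continuous $\phi \in C(N; M_{l}(\mathbb{C}))$, I would approximate it uniformly by smooth $\phi_{n}$. Then $\| [\phi, P] - [\phi_{n}, P] \| \leq 2 \| \phi - \phi_{n} \|_{\infty} \to 0$, and since each $[\phi_{n}, P]$ is compact by the smooth case, the norm limit $[\phi, P]$ is compact as well.

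For the Fredholm assertion, set $\psi = \phi^{-1} \in C(N; GL_{l}(\mathbb{C}))$. A short computation gives
\[ T_{\phi} T_{\psi} - T_{\phi \psi} = P \phi P \psi P - P \phi \psi P = -P \phi [\psi, P] P, \]
which is compact by the previous step. Since $T_{\phi \psi} = T_{1} = \mathrm{id}_{H_{+}^{l}}$, this yields $T_{\phi} T_{\psi} = \mathrm{id}_{H_{+}^{l}} + K$ with $K$ compact, and symmetrically $T_{\psi} T_{\phi} - \mathrm{id}_{H_{+}^{l}}$ is compact. By Atkinson's theorem, $T_{\phi}$ is Fredholm with parametrix $T_{\psi}$. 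The main obstacle in the plan is the principal-symbol cancellation in the first paragraph; once $P$ is recognized as a $\Psi$DO and the tensor-factor structure of its principal symbol is noted, everything else is routine.
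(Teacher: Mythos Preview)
The paper does not supply its own proof of this proposition; it simply records the statement and cites the original source. Your argument is the standard route and is correct: recognize $P$ as an order-$0$ pseudodifferential operator, observe that the principal symbols of $P \otimes 1_{\mathbb{C}^{l}}$ and $1_{S_{N}} \otimes \phi$ commute on the fibres of $S_{N} \otimes \mathbb{C}^{l}$, and conclude that $[\phi,P]$ has order $\leq -1$. The approximation and Atkinson arguments for the continuous and Fredholm assertions are exactly as expected.

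One point deserves sharpening. Your phrasing ``smoothing $\chi$ near the origin \ldots\ Seeley then identifies $P$ as a classical $\Psi$DO'' is a bit loose: replacing the characteristic function by a smooth $\tilde\chi$ does not by itself place $\tilde\chi(D_{N})$ in the $\Psi$DO calculus, since bounded smooth functional calculus of a $\Psi$DO need not land there. The honest input is Seeley's theorem on complex powers: $(D_{N}^{2})^{-1/2}$ (defined on the orthocomplement of $\ker D_{N}$ and extended by $0$) is a classical $\Psi$DO of order $-1$, so $\mathrm{sgn}(D_{N}) = D_{N}(D_{N}^{2})^{-1/2}$ is order $0$, and $P = \tfrac{1}{2}(1+\mathrm{sgn}(D_{N}))$ modulo a finite-rank correction on $\ker D_{N}$. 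With that adjustment your first paragraph is complete.
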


There exists an index theorem for Toeplitz operators. 
We can consider that this index theorem is a corollary of the Atiyah-Singer index theorem. 
Let $\pi : S^{\ast}N \to N$ be the unit sphere bundle of $T^{\ast}N$. 
Denote by $\sigma (x,\xi) \in \mathrm{End}((\pi^{\ast}S_{N})_{(x,\xi )})$ 
the principal symbol of $D_{N}$ 
for all $(x,\xi) \in S^{\ast}N$  
and $\mathcal{S}^{+}_{(x,\xi)}$ the $1$-eigenspace of $\sigma (x,\xi) = i c (\xi)$ . 
Set $\mathcal{S}^{+} = \bigcup_{(x,\xi)} \mathcal{S}_{(x,\xi)}^{+}$, 
then $\mathcal{S}^{+}$ is a subbundle of $\pi^{\ast}S_{N}$. 

\begin{prp}
\label{prp:Toeplitzindex}
\cite[Cororally 24.8]{MR679698}
\cite[Theorem 4]{MR669904} 
The Fredholm index of Toeplitz operators satisfies the following: 
\[ \ind (T_{\phi}) 
	= \langle \pi^{\ast}\mathrm{Td}(TN \otimes \mathbb{C}) 
	ch(\mathcal{S}^{+})\pi^{\ast}ch(\phi) , [S^{\ast}N] \rangle . \] 
\end{prp}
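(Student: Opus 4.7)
The plan is to realize $T_\phi$ as a zero-order elliptic pseudodifferential operator on $N$ and apply the Atiyah--Singer index theorem for pseudodifferential operators on a closed manifold.

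First, I would extend $T_\phi$ to the full Hilbert space by considering
\[ A_\phi := P \phi P + (I - P) : L^2(S_N)^l \to L^2(S_N)^l . \]
Since $A_\phi$ acts as $T_\phi$ on $H_+^l$ and as the identity on $(I-P) L^2(S_N)^l$, one has $\ind (A_\phi) = \ind (T_\phi)$. Next, I would verify that $A_\phi$ is a zero-order pseudodifferential operator on $N$. The spectral projection $P$ onto the non-negative spectrum of $D_N$ is a zero-order pseudodifferential operator whose principal symbol at $(x,\xi) \in S^*N$ is the orthogonal projection $p(x,\xi)$ onto $S^+_{(x,\xi)}$, the positive eigenspace of $\sigma_{D_N}(x,\xi) = i c(\xi)$. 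Consequently the principal symbol of $A_\phi$ equals
\[ \sigma (x,\xi) = p(x,\xi)\, \phi (x)\, p(x,\xi) + (I - p(x,\xi)), \]
which is invertible on $S^*N$ since $\phi$ takes values in $GL_l(\mathbb{C})$; hence $A_\phi$ is elliptic and Fredholm.

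The heart of the argument is the identification of the $K$-theoretic symbol class. In the orthogonal splitting $\pi^*S_N^{\,l} = (\pi^*S^+)^{l} \oplus (\pi^*S_N / \pi^*S^+)^{l}$ over $S^*N$, the symbol $\sigma$ is block-diagonal, acting as $\phi \otimes \mathrm{id}_{\pi^*S^+}$ on the first summand and as the identity on the second. Its class in $K^1(S^*N)$ therefore agrees with the external product $\pi^*[\phi] \cdot [\pi^*S^+]$, where $[\phi] \in K^1(N)$ and $[\pi^*S^+] \in K^0(S^*N)$. Multiplicativity of the Chern character then gives
\[ ch[\sigma] = \pi^* ch(\phi) \cdot ch(S^+) . \]

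Finally, the Atiyah--Singer index theorem for a zero-order elliptic pseudodifferential operator on the closed manifold $N$ expresses
\[ \ind (A_\phi) = \langle \pi^* \mathrm{Td}(TN \otimes \mathbb{C}) \cdot ch[\sigma] , [S^*N] \rangle, \]
and substituting the Chern character computation above yields the stated formula. The principal obstacle is the third step: one must carefully identify the symbol class in $K^1(S^*N)$ using the spectral splitting of $\sigma_{D_N}$ and justify the factorization of its Chern character through $\pi : S^*N \to N$. Once this $K$-theoretic bookkeeping is in place, the remaining steps are essentially formal; the main technical input is the Boutet de Monvel/Baum--Douglas--Taylor description of $P$ as a pseudodifferential operator with the specified spectral projection as principal symbol.
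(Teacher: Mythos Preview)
The paper does not give its own proof of this proposition: it is stated with citations to \cite[Corollary 24.8]{MR679698} and \cite[Theorem 4]{MR669904}, and the surrounding text only remarks that ``this index theorem is a corollary of the Atiyah--Singer index theorem.'' So there is no in-paper argument to compare against.

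Your outline follows the standard route taken in those references: replace $T_\phi$ by $A_\phi = P\phi P + (1-P)$ on all of $L^2(S_N)^l$, identify $P$ as an order-zero pseudodifferential operator with principal symbol the spectral projection $p(x,\xi)$ onto $S^+_{(x,\xi)}$, read off the symbol of $A_\phi$ as $\phi$ on $S^+$ and the identity on the complement, and then apply Atiyah--Singer. This is exactly the argument of Baum--Douglas--Taylor \cite{MR669904}, so your proposal is consistent with the sources the paper invokes. The one place you flag as delicate---the $K$-theoretic bookkeeping for the symbol class and the factorization $ch[\sigma] = \pi^\ast ch(\phi)\cdot ch(S^+)$---is indeed where the substance lies, and your description of it is accurate; the rest is formal.
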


%
\section{Main theorem}
%

\subsection{The index class}
\label{sub:index_class}

In this subsection, we define the odd index class  in $K_{1}(C^{\ast}(M))$. 
After that, we take the pairing of the Roe cocycle with this class. 

Let $(M,g)$ be a complete Riemannian manifold and 
$S \to M$ a graded Clifford bundle with the Clifford action $c$ and the grading $\epsilon$. 
Denote by $D$ the graded Dirac operator of $S$. 
Set $\| f \| = \sup_{x \in M}|f(x)|$ for $f \in C(M)$ and 
$\| X \| = \sup_{x \in M} \sqrt{g_{x}(X,X)}$ for $X \in C^{\infty}(TM)$. 
Denote by $C_{b}(M)$ the $C^{\ast}$-algebra of 
continuous bounded functions on $M$. 

\begin{dfn}
\label{dfn:algebra}
Define $\mathscr{W}(M)$ by the subset of $C^{\infty}(M)$ such that 
one has $f \in \mathscr{W}(M)$ if 
$\| f \| < + \infty$, $\| \mathrm{grad} (f) \| < + \infty$. 
Define $C_{\mathrm{w}}(M)$ by the closure of $\mathscr{W}(M)$ 
by the uniform norm on $M$. 
\end{dfn}

%

Of course, $\mathscr{W}(M)$ is a unital $\ast$-subalgebra of $C_{b}(M)$. 
Therefore, $C_{\mathrm{w}}(M)$ is a unital $C^{\ast}$-algebra.

\begin{rmk}
Let $C_{h}(M)$ be the Higson algebra of $M$, 
that is, $C_{h}(M)$ is the $C^{\ast}$-algebra 
generated by all smooth and bounded functions defined on $M$ 
with which gradient is vanishing at infinity \cite[p.26]{Higsonalg}. 
By definition, one has $C_{h}(M) \subset C_{\mathrm{w}}(M)$. 


We assume $M = \mathbb{R} \times N$ and $\phi \in C^{\infty}(N)$. 
In this case, we have $1 \otimes \phi \in C_{\mathrm{w}}(M)$ 
but $1 \otimes \phi \not \in C_{h}(M)$ in general. 
This is a merit of using $C_{\mathrm{w}}(M)$ (see Section \ref{sec:cyl}). 
\end{rmk}

We define 
a Kasparov $(C_{\mathrm{w}}(M) , C^{\ast}(M))$-module which is made of the Dirac operator $D$. 
We assume that 
the Roe algebra $C^{\ast}(M)$ is 
an even graded $C^{\ast}$-algebra 
and a graded Hilbert $C^{\ast}(M)$-module simultaneously, 
where the grading is induced by $\epsilon$. 
Since $\chi_{0}(x) = x(1+x^{2})^{-1/2}$ is a chopping function, 
the left composition of $F_{D} = D(1+D^{2})^{-1/2} \in D^{\ast}(M)$ 
on an element of $C^{\ast}(M)$ 
is an odd operator on $C^{\ast}(M)$. 

\begin{prp}
Let $\mu : C_{\mathrm{w}}(M) \to \mathbb{B}(C^{\ast}(M))$ be the left composition of the multiplication operator: 
$\mu (f) u = fu \in C^{\ast}(M)$ for $f \in C_{\mathrm{w}}(M)$ and $u \in C^{\ast}(M)$. 
Then one has $[C^{\ast}(M) , \mu , F_{D}] \in KK(C_{\mathrm{w}}(M) , C^{\ast}(M))$. 
\end{prp}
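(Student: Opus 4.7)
The plan is to verify that $(C^{\ast}(M), \mu, F_{D})$ satisfies the axioms of a Kasparov $(C_{w}(M), C^{\ast}(M))$-module, treating $C^{\ast}(M)$ as the standard right Hilbert module over itself with grading induced by $\epsilon$. Under the canonical isomorphism $\mathbb{K}(C^{\ast}(M)) \cong C^{\ast}(M)$, the compactness requirements reduce to membership in $C^{\ast}(M)$. Thus I need to check: (i) $\mu(f)$ preserves $C^{\ast}(M)$ for each $f \in C_{w}(M)$ and is even; (ii) $F_{D}$ is an odd element of $\mathbb{B}(C^{\ast}(M))$; (iii) $F_{D}^{\ast} = F_{D}$, so the clause $\mu(f)(F_{D}-F_{D}^{\ast}) \in \mathbb{K}$ is automatic; (iv) $\mu(f)(F_{D}^{2}-1) \in C^{\ast}(M)$; (v) $[\mu(f), F_{D}] \in C^{\ast}(M)$.

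Items (i)--(iv) are routine. For (i), given $f \in \mathscr{W}(M)$ and $u \in \mathscr{X}$ with smooth kernel $K(x,y)$, the product $fu$ has smooth kernel $f(x)K(x,y)$ with the same propagation, so $fu \in \mathscr{X}$; density of $\mathscr{W}(M)$ in $C_{w}(M)$ and $\mathscr{X}$ in $C^{\ast}(M)$ extends the conclusion to arbitrary $f \in C_{w}(M)$, and scalarity of $f$ makes $\mu(f)$ commute with $\epsilon$. For (ii), $F_{D} = \chi_{0}(D)$ with $\chi_{0}(x) = x(1+x^{2})^{-1/2}$ odd and $D$ odd, so $F_{D}$ anticommutes with $\epsilon$; membership in $\mathbb{B}(C^{\ast}(M))$ follows from Proposition \ref{prp:list}~(iv), since $F_{D} \in D^{\ast}(M)$. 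For (iii), $D$ is self-adjoint and $\chi_{0}$ is real. For (iv), $F_{D}^{2} - 1 = -(1+D^{2})^{-1}$ lies in $C^{\ast}(M)$ by Proposition \ref{prp:list}~(v) applied to $(1+x^{2})^{-1} \in C_{0}(\mathbb{R})$.

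The substantive step is (v). I first reduce to $f \in \mathscr{W}(M)$ by uniform approximation, using that $C^{\ast}(M)$ is norm-closed. For such $f$, the bounded-gradient condition makes $[f, D] = -c(df)$ a bounded bundle endomorphism, and I exploit the integral representation
\begin{equation*}
F_{D} = \frac{2}{\pi}\int_{0}^{\infty} D(1+t^{2}+D^{2})^{-1}\,dt
\end{equation*}
to expand the commutator under the integral. Setting $R_{t} = (1+t^{2}+D^{2})^{-1}$ and combining $[f, R_{t}] = -R_{t}[f,D^{2}]R_{t}$, $[f, D^{2}] = [f,D]D + D[f,D]$, and $D^{2}R_{t} = 1 - (1+t^{2})R_{t}$, a short rearrangement should yield
\begin{equation*}
[f, DR_{t}] = -(DR_{t})[f,D](DR_{t}) + (1+t^{2})R_{t}[f,D]R_{t}.
\end{equation*}
Both $R_{t}$ and $DR_{t}$ lie in $C^{\ast}(M)$, being functional calculus applied to $C_{0}(\mathbb{R})$-functions of $D$ (Proposition \ref{prp:list}~(v)); composition with the bounded bundle endomorphism $[f,D]$ preserves $C^{\ast}(M)$; and the estimates $\|R_{t}\| \leq (1+t^{2})^{-1}$, $\|DR_{t}\| \leq (2\sqrt{1+t^{2}})^{-1}$ give $\|[f, DR_{t}]\| = O((1+t^{2})^{-1})$, so the integral converges absolutely in operator norm to an element of the closed subspace $C^{\ast}(M)$.

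The main obstacle lies in (v), specifically ensuring every intermediate operator appearing in the integral formula remains in $C^{\ast}(M)$. The entire argument rests on the bounded-gradient hypothesis built into $\mathscr{W}(M)$: without it, $[f, D]$ would be unbounded, the rearrangement of $[f, DR_{t}]$ would not admit the clean form above, and the $t$-integral would not converge. This is also the structural reason the construction fails for $C_{b}(M)$ and why $C_{w}(M)$ is the natural algebra to use.
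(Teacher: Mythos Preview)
Your proof is correct and follows essentially the same approach as the paper: both verify the easy Kasparov axioms directly and then establish $[\mu(f),F_{D}]\in C^{\ast}(M)$ via the Baaj--Julg integral representation of $D(1+D^{2})^{-1/2}$, expanding the commutator under the integral and using the resolvent bounds $\|R_{t}\|\le(1+t^{2})^{-1}$, $\|DR_{t}\|\le\tfrac{1}{2}(1+t^{2})^{-1/2}$ together with $[f,D]=-c(\mathrm{grad}\,f)\in D^{\ast}(M)$ to get absolute convergence in the ideal $C^{\ast}(M)$. Your formula and the paper's differ only by the substitution $\lambda=t^{2}$ and a sign convention for $[f,D]$ versus $[D,f]$.
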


\begin{proof}
Our proof is similar to the Baaj-Julg picture of Kasparov modules 
\cite[Proposition 2.2]{MR715325}. 
Firstly, we obtain $F_{D} \in \mathbb{B}(C^{\ast}(M))$, 
since $F_{D}$ is a self-adjoint bounded operator 
on $L^{2}(S)$ and one has $F_{D}u \in C^{\ast}(M)$ 
for any $u \in C^{\ast}(M)$. 
Because of  
$1 - F_{D}^{2} = 1 - D^{2}(1+D^{2})^{-1} = (1+D^{2})^{-1} \in C^{\ast}(M) = \mathbb{K}(C^{\ast}(M))$ 
and $F_{D}^{\ast} = F_{D}$, 
it suffices to show $[\mu(f), F_{D}] \in C^{\ast}(M)$. 

Now, the following integral formula 
\begin{align*} 
 [ \mu (f) ,  F_{D}] 
&= \frac{1}{\pi}\int_{0}^{\infty}\lambda^{-1/2}
	(1+\lambda )(1 + D^{2} + \lambda )^{-1}[f,D](1 + D^{2} + \lambda )^{-1} d\lambda \\
 & \ \ \ \  + \frac{1}{\pi}\int_{0}^{\infty}\lambda^{-1/2} D(1 + D^{2} + \lambda )^{-1}
			[D,f]D(1 + D^{2} + \lambda )^{-1} d\lambda 				
\end{align*}
is uniformly integrable for any $f \in \mathscr{W}(M)$ 
by $\| (1 + D^{2} + \lambda )^{-1} \| \leq (1+\lambda )^{-1}$ and 
$\| D(1 + D^{2} + \lambda )^{-1} \| \leq (1+\lambda )^{-1/2}$ for any $\lambda \geq 0$, 
and $[\mu (f) , D] = -c (\mathrm{grad} (f)) \in D^{\ast}(M)$ for any $f \in \mathscr{W}(M)$. 
So we obtain $[\mu (f) , F_{D}] \in C^{\ast}(M)$ for any $f \in \mathscr{W}(M)$ 
by $D(1 + D^{2} + \lambda )^{-1} \in C^{\ast}(M)$ for any $\lambda \geq 0$. 
Thus we obtain $[\mu (f) , F_{D}] \in C^{\ast}(M)$ for any $f \in C_{\mathrm{w}}(M)$, 
since we have $\| [\mu (f) , F_{D}] \| \leq 2 \| f \|$ for any $f \in \mathscr{W}(M)$ 
and $\mathscr{W}(M)$ is dense in $C_{\mathrm{w}}(M)$. 
This implies $(C^{\ast}(M) , \mu , F_{D})$ is a Kasparov $(C_{\mathrm{w}}(M) , C^{\ast}(M))$-module. 
\end{proof}

\begin{rmk}
Set $[D] = [C^{\ast}(M), \mu , F_{D}] \in KK(C_{\mathrm{w}}(M) , C^{\ast}(M))$. 
Let $\chi$ be a chopping function. 
Then one has $\chi (D) - F_{D} \in C^{\ast}(M)$ 
by $\chi - \chi_{0} \in C_{0}(M)$. 
Therefore, we obtain $[D] = [C^{\ast}(M), \mu , \chi (D)]$, 
that is, $[D]$ is independent of the choice of a chopping function $\chi$. 
\end{rmk}

Any $\phi \in GL_{l}(C_{\mathrm{w}}(M))$ determines $[\phi ] \in K_{1}(C_{\mathrm{w}}(M))$. 
Due to the Kasparov product 
\[ \hat{\otimes}_{C_{\mathrm{w}}(M)} : K_{1}(C_{\mathrm{w}}(M)) \times KK(C_{\mathrm{w}}(M),C^{\ast}(M)) \to K_{1}(C^{\ast}(M)), \] 
we get the index class in $K_{1}(C^{\ast}(M))$ as follows. 

\begin{dfn}
\label{dfn:indclass}
For any $\phi \in GL_{l}(C_{\mathrm{w}}(M))$, set 
\[ \mathrm{Ind}(\phi , D) = [\phi] \hat{\otimes}_{C_{\mathrm{w}}(M)} [D] \in K_{1}(C^{\ast}(M)). \] 
\end{dfn}

\subsection{The operator on $N$}

Roughly speaking, our main theorem is 
Connes' pairing of the Roe cocycle with $\mathrm{Ind}(\phi , D) \in K_{1}(C^{\ast}(M))$
is calculated by the Fredholm index of a Toeplitz operator on a hypersurface $N$. 
In this subsection, we define its operator. 

Let $M$ be a partitioned manifold 
partitioned by $(M^{+}, M^{-}, N)$. 
Let $S = S^{+} \oplus S^{-}$, $c$ and $D$ are same in Subsection \ref{sub:index_class}. 
Let $\nu \in C^{\infty}(TN)$ be the outward pointing normal unit vector field on $N = \partial M^{-}$. 

Set $S_{N} = S^{+}|_{N}$ and define 
$c_{N} : C^{\infty}(TN) \to C^{\infty}(\mathrm{End}(S_{N}))$ by $c_{N}(X) = c(\nu)c(X)$. 
Then $S_{N}$ can be equipped with a Clifford bundle structure with the Clifford action $c_{N}$. 
Denote by $D_{N}$ the Dirac operator of $S_{N}$. 
We denote the restriction of $\phi \in GL_{l}(C_{\mathrm{w}}(M))$ to $N$ 
by the same letter $\phi$. 
Let $T_{\phi}$ be the Toeplitz operator with symbol $\phi$. 
This Toeplitz operator $T_{\phi}$ is the operator on $N$ in our main theorem.

\subsection{The index theorem}

We recall that we can take Connes' paring of the Roe cocycle with 
$\mathrm{Ind}(\phi , D) \in K_{1}(C^{\ast}(M))$. 
Our main theorem gives the result of its paring. 

\begin{thm}
\label{thm}
Let $M$ be a partitioned manifold partitioned by $(M^{+}, M^{-}, N)$. 
Let $S \to M$ be a graded Clifford bundle with the grading $\epsilon$ 
and denote by $D$ the graded Dirac operator of $S$. 
We denote the restriction of $\phi \in GL_{l}(C_{\mathrm{w}}(M))$ 
to $N$ by the same letter $\phi$. 
Then the following formula holds: 
\[ \langle \mathrm{Ind}(\phi , D) , \zeta \rangle = -\frac{1}{8\pi i}\ind (T_{\phi}). \] 
\end{thm}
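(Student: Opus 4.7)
The plan is to combine Proposition \ref{prp:paind} with an explicit representative of $\mathrm{Ind}(\phi, D)$ in $GL_l(C^{\ast}(M)^{+})$ and then to evaluate the Fredholm index $\ind(\Pi u \Pi)$ by reducing it to the Toeplitz index on $N$. Concretely, I would first use the Cuntz picture of $[D]$ to produce an element $u \in GL_l(C^{\ast}(M)^{+})$ representing the Kasparov product. A natural candidate is $u = \phi P + (1 - P)$, where $P$ is a spectral projection of $D$ onto its non-negative part (or an operator-theoretic substitute built from the chopping function $\chi_{0}$); this is invertible modulo $C^{\ast}(M)$ because $[\phi, P] \in C^{\ast}(M)$ for $\phi \in C_{w}(M)$, by essentially the same calculation used to place $[D]$ in $KK(C_{w}(M), C^{\ast}(M))$. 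Standard odd/even Kasparov-product theory then identifies $[u] = \mathrm{Ind}(\phi, D)$, and by Proposition \ref{prp:paind} the theorem reduces to verifying $\ind(\Pi u \Pi) = \ind(T_{\phi})$.

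The main computation is the case $M = \mathbb{R} \times N$ with $\phi = 1 \otimes \psi$ for $\psi \in C^{\infty}(N; GL_{l}(\mathbb{C}))$. On the cylinder, the Dirac operator factors as $D = c(\partial_{t})(\partial_{t} + \widetilde{D}_{N})$ for an operator $\widetilde{D}_{N}$ conjugate to $D_{N}$, so its positive spectral projection $P$ is built from the Hilbert transform $H_{t} = \mathrm{sgn}(-i\partial_{t})$ in the $\mathbb{R}$-factor together with the spectral projection $P_{N}$ of $\widetilde{D}_{N}$. Since $\psi$ depends only on $N$ while $\Pi$ depends only on $t$, the operator $\Pi u \Pi$ admits a tractable decomposition: $\Pi H_{t} \Pi$ is, on $L^{2}(\mathbb{R}_{+})$, unitarily equivalent to the shift on Hardy space, and an explicit homotopy of Fredholm operators deforms $\Pi u \Pi$ to an operator whose index is $\ind(T_{\psi})$. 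This yields $\ind(\Pi u \Pi) = \ind(T_{\psi})$, with the correct sign.

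For a general partitioned manifold, I would follow Higson's reduction \cite{MR1113688}. The Roe cocycle $\zeta$ is localized in a tubular neighborhood $U \cong (-\varepsilon, \varepsilon) \times N$ of $N$: by excision for the Roe algebra and Proposition \ref{prp:loccpt}(ii), the pairing $\langle \mathrm{Ind}(\phi, D), \zeta \rangle$ is unchanged if one modifies $D$ and $\phi$ outside $U$. I would then glue on cylindrical ends to identify a collar of $N$ with a corresponding collar in $\mathbb{R} \times N$, extend $\phi$ to an element of $C_{w}(\mathbb{R} \times N)$, and invoke the cylinder case. The right-hand side $\ind(T_{\phi})$ depends only on $\phi|_{N}$ and $D_{N}$, so it is automatically preserved.

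The hardest point is the cylinder computation: extracting precisely the constant $-1/(8\pi i)$ with the correct sign requires careful tracking of the Clifford grading conventions, the chopping-function normalization inside the representative $u$, and the identification of $\Pi H_{t} \Pi$ with the Hardy shift. A secondary difficulty in the reduction step is that $C_{w}$ is not automatically preserved under cut-and-paste operations, so one must verify that $\phi$ admits a cylindrical extension still lying in $C_{w}(\mathbb{R} \times N)$; this is where the defining bound $\| grad(f) \| < \infty$ (rather than vanishing at infinity, as in $C_{h}(M)$) becomes essential.
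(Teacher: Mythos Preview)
Your overall three-step strategy (explicit representative for the Kasparov product, cylinder computation via the Hilbert transform, Higson-style reduction to the cylinder) matches the paper's. The gap is in the first step: the candidate $u=\phi P+(1-P)$, with $P$ the non-negative spectral projection of $D$, does not represent $\mathrm{Ind}(\phi,D)\in K_1(C^{\ast}(M))$. First, $u-1=(\phi-1)P$ does not lie in $M_l(C^{\ast}(M))$: one has $P=(1+\chi(D))/2\in D^{\ast}(M)\setminus C^{\ast}(M)$, and $(\phi-1)P$ fails to be locally compact (for $f\in C_0(M)$ one has $(\phi-1)Pf\sim (\phi-1)fP$ with $(\phi-1)f\in C_0(M)$, but $gP$ is not compact for general $g\in C_0(M)$). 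Being merely ``invertible modulo $C^{\ast}(M)$'' places $[u]$ in $K_1(D^{\ast}(M)/C^{\ast}(M))$, whose boundary map lands in $K_0(C^{\ast}(M))$---the wrong group. Second, and more structurally, the formula $\phi P+(1-P)$ is the representative for the \emph{odd} product $K_1\times KK^1\to K_0$; here $[D]\in KK^0(C_w(M),C^{\ast}(M))$ is even (the Clifford bundle is $\mathbb{Z}_2$-graded), the product lands in $K_1(C^{\ast}(M))$, and the grading $\epsilon$ must enter the representative.

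The paper instead passes to the Cuntz (quasihomomorphism) picture of $[D]$ and obtains the representative
$u_{\chi,\phi}=\mathcal{D}_{\chi}\begin{bmatrix}\phi&0\\0&1\end{bmatrix}\mathcal{D}_{\chi}$
with $\mathcal{D}_{\chi}=\chi(D)+\epsilon\,\eta(D)$ and $\eta=(1-\chi^{2})^{1/2}$; one checks
$u_{\chi,\phi}-\begin{bmatrix}1&0\\0&\phi\end{bmatrix}\in M_l(C^{\ast}(M))$,
so that $u_{\chi,\phi}\begin{bmatrix}1&0\\0&\phi^{-1}\end{bmatrix}$ genuinely lies in $GL_l(C^{\ast}(M)^{+})$. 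On the cylinder the paper then runs a homotopy replacing $D_N$ by its sign $F$, Fourier-transforms $\partial_t\mapsto it$ and $\Pi\mapsto \hat P=(1-H)/2$, and decomposes along the explicit eigenbasis $a_n(t)=(t-i)^n(t+i)^{-n-1}$ of the Hilbert transform to isolate a single copy of $T_{\phi}$. Your cylinder sketch, built around compressing $H_t$ by $\Pi$, is tailored to the odd formula and would need to be reworked once the correct representative is in hand.
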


If a function $\phi \in C^{\infty}(M;GL_{l}(\mathbb{C}))$ 
satisfies $\| \phi \| < \infty$, 
$\| \mathrm{grad} (\phi ) \| < \infty$ and $\| \phi^{-1} \| < \infty$, 
one has $\phi \in GL_{l}(C_{\mathrm{w}}(M))$ 
since the gradient of $\phi^{-1}$ 
is also bounded. 
The index theorem for Toeplitz operators (see Proposition \ref{prp:Toeplitzindex}) 
implies the following: 

\begin{cor}
\label{cor:mcor}
Let $M$ be a partitioned manifold partitioned by $(M^{+}, M^{-}, N)$,  
and $\Pi$ the characteristic function of $M^{+}$. 
Let $S \to M$ be a graded Clifford bundle with the grading $\epsilon$ and 
denote by $D$ the graded Dirac operator of $S$. 
Assume that $\phi \in C^{\infty}(M ; GL_{l}(\mathbb{C}))$ satisfies $\| \phi \| < \infty$, 
$\| \mathrm{grad} (\phi ) \| < \infty$ and $\| \phi^{-1} \| < \infty$. 
Then one has 
\begin{align*}
& \ind \left( \Pi (D+ \epsilon)^{-1}
\begin{bmatrix} \phi & 0 \\ 0 & 1 \end{bmatrix}
(D+\epsilon ) \Pi : \Pi (L^{2}(S)) \to \Pi (L^{2}(S)) \right)  \\
=& 
\int_{S^{\ast}N} 
\pi^{\ast}\mathrm{Td}(TN \otimes \mathbb{C})ch(\mathcal{S}^{+})\pi^{\ast}ch(\phi). 
\end{align*}
\end{cor}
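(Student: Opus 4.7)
The plan is to derive Corollary~\ref{cor:mcor} as a direct consequence of Theorem~\ref{thm} combined with the index formula for Toeplitz operators (Proposition~\ref{prp:Toeplitzindex}). The only substantive task is to reinterpret the Fredholm index appearing on the left-hand side of the corollary as $-8\pi i \langle \mathrm{Ind}(\phi,D),\zeta\rangle$.

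First, under the hypotheses, $\phi \in GL_l(C_w(M))$: its entries and those of $\phi^{-1}$ are smooth and bounded, and $\mathrm{grad}(\phi^{-1}) = -\phi^{-1}\,\mathrm{grad}(\phi)\,\phi^{-1}$ is bounded as well, so both lie in $M_l(\mathscr{W}(M))$ by Definition~\ref{dfn:algebra}. Theorem~\ref{thm} therefore yields $\langle \mathrm{Ind}(\phi,D),\zeta\rangle = -\tfrac{1}{8\pi i}\,\ind(T_{\phi|_N})$, and Proposition~\ref{prp:Toeplitzindex} converts $\ind(T_{\phi|_N})$ into the topological pairing on the right-hand side of the corollary.

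The substantive content is the identification of the Fredholm index on the left-hand side with $-8\pi i \langle \mathrm{Ind}(\phi,D),\zeta\rangle$. The bounded operator
\[
u_\phi \;:=\; (D+\epsilon)^{-1}\begin{bmatrix}\phi & 0\\ 0 & 1\end{bmatrix}(D+\epsilon) \;=\; \begin{bmatrix}\phi & 0\\ 0 & 1\end{bmatrix} + (D+\epsilon)^{-1}\left[\begin{bmatrix}\phi & 0\\ 0 & 1\end{bmatrix},\,D\right]
\]
(the equality uses that $\mathrm{diag}(\phi,1)$ commutes with $\epsilon$, and the second summand is a bounded zeroth-order pseudodifferential operator because $(D+\epsilon)^{-1}$ is order $-1$ and $[\mathrm{diag}(\phi,1),D]$ is order $\le 1$) must be shown to represent $\mathrm{Ind}(\phi,D) = [\phi]\hat{\otimes}_{C_w(M)}[D]$ in $K_1(C^\ast(M))$. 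Following the strategy announced in the introduction, I would compute the Kasparov product via the Cuntz picture of $[D] = [C^\ast(M),\mu,F_D]$ with $F_D = D(1+D^2)^{-1/2}$; the bounded invertible $(D+\epsilon)^{-1}$ serves as a Cayley-type parametrix implementing the quasi-homomorphism associated to $[D]$, while the asymmetric block $\mathrm{diag}(\phi,1)$ encodes the splitting of the graded module in the product formula $K_1\otimes KK^0\to K_1$. Once this identification is established, Proposition~\ref{prp:paind} applied to $u_\phi$---after, if needed, passing to a representative in $GL_l(\mathscr{A})$ via the isomorphism $K_1(\mathscr{A}) \cong K_1(C^\ast(M))$ of Proposition~\ref{prp:induiso}---yields $\ind(\Pi u_\phi \Pi) = -8\pi i \langle \mathrm{Ind}(\phi,D),\zeta\rangle$, completing the chain.

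The main obstacle is the Kasparov-product identification itself. A subtle point in carrying it out is that, since $\phi$ is not assumed to vanish at infinity, one has $u_\phi - 1 \notin M_l(C^\ast(M))$ in general, and the literal application of Proposition~\ref{prp:paind} requires a homotopy or compact perturbation putting $u_\phi$ into $GL_l(\mathscr{A})$. The Cuntz-picture computation of $[\phi]\hat{\otimes}[D]$ supplies precisely this, and once it is in place, the remaining deductions from Theorem~\ref{thm} and Proposition~\ref{prp:Toeplitzindex} are routine.
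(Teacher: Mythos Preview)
Your plan hinges on identifying $\ind(\Pi u_\phi\Pi)$ with $-8\pi i\langle\mathrm{Ind}(\phi,D),\zeta\rangle$ for general $\phi\in GL_l(\mathscr{W}(M))$, but the route you sketch does not supply this. The Cuntz-picture computation (Section~4.1) produces the representative $\mathcal{D}_\chi\,\mathrm{diag}(\phi,1)\,\mathcal{D}_\chi\cdot\mathrm{diag}(1,\phi^{-1})$ with $\mathcal{D}_\chi=\chi(D)+\epsilon\eta(D)$, \emph{not} $(D+\epsilon)^{-1}\mathrm{diag}(\phi,1)(D+\epsilon)$. Bridging the two is precisely the content of the homotopy $t\mapsto\rho_t$ in Proposition~\ref{prp:homotopy-Kasp}, and the continuity of that homotopy at $t=0$ requires the extra hypothesis $[|D|,\phi]\in\mathcal{L}(L^2(S))$, i.e.\ $\phi\in GL_l(\mathscr{W}_1(M))$. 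Corollary~\ref{cor:mcor} assumes only $\phi\in GL_l(\mathscr{W}(M))$, so your identification is not available in the stated generality, and the claim that ``the Cuntz-picture computation supplies precisely this'' is not correct.

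The paper avoids this obstacle by \emph{not} deriving the corollary from Theorem~\ref{thm}. Instead it proves the two results in parallel: on the cylinder $\mathbb{R}\times N$ one computes $\ind(\Pi\varrho(\phi)\Pi)=\ind(T_\phi)$ directly (Section~\ref{sec:cyl}), and the general case is reduced to the cylinder by the cobordism invariance and the Higson-type comparison established specifically for $\varrho(\phi)$ (Lemma~\ref{cobor} and Corollary~\ref{ourHcor}). Proposition~\ref{prp:Toeplitzindex} then converts $\ind(T_\phi)$ into the cohomological pairing. The identification you want to use enters only on the cylinder, where $\phi=1\otimes\psi$ automatically lies in $\mathscr{W}_1$, and serves there to link the corollary to the theorem rather than the other way around.
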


The proof of Theorem \ref{thm} and Corollary \ref{cor:mcor} 
is provided in Section \ref{sec:cyl} and \ref{sec:gen}.

%
\section{Suspensions and extensions}
%

\subsection{A relationship with Roe's odd index}
\label{sec:rel}

In this subsection, 
we give a formal discussion about  
a relationship with Roe's odd index. 
Firstly, we recall the definition of 
Roe's odd index class $\text{odd-ind}(D)$ \cite[Definition 2.7]{MR996446}. 
Let $M$ be a complete Riemannian manifold, $S \to M$ 
a Clifford bundle, 
$D$ the Dirac operator of $S$ and $\chi$ a chopping function. 
Then we have $\chi (D) \in D^{\ast}(M)$ and 
$q (\chi (D))$ is independent of a choice of $\chi$, 
where $q : D^{\ast}(M) \to D^{\ast}(M)/C^{\ast}(M)$ is a quotient map. 
Moreover, we have $[q ((\chi (D) + 1)/2)] \in K_{0}(D^{\ast}(M)/C^{\ast}(M))$ 
by $\chi^{2} - 1 \in C_{0}(\mathbb{R} ; \mathbb{R})$. 
Let $\delta : K_{0}(D^{\ast}(M)/C^{\ast}(M)) \to K_{1}(C^{\ast}(M))$ be a 
connecting homomorphism of the six-term exact sequence in operator $K$-theory. 
Set $\text{odd-ind}(D) = \delta ([q ((\chi (D) + 1)/2)]) \in K_{1}(C^{\ast}(M))$. 
Remark that we have
$\text{odd-ind}(D) = [(D-i)(D+i)^{-1}]$ if we choose 
$\displaystyle \chi (x) = \frac{1}{\pi}\mathrm{Arg}\left(-\frac{x-i}{x+i}\right)$, 
where we choose the principal value of the argument is $(-\pi , \pi ]$. 
Note that the map defining the odd index class 
is called the assembly map $A : K^{1}(C_{0}(M)) \to K_{1}(C^{\ast}(M))$. 

Secondly, we reconstruct this odd index in terms of $KK$-theory. 
Define $c_{\cdot} : \mathbb{C} \to C_{\mathrm{w}}(M)$ by 
$c_{z}(x) = z$ for $z \in \mathbb{C}$ and $x \in M$. 
Then we have $c_{\cdot} \in KK (\mathbb{C}, C_{\mathrm{w}}(M))$ since this map $c_{\cdot}$ 
is a $\ast$-homomorphism. 
On the other hand, we have 
$[C^{\ast}(M) \oplus C^{\ast}(M), \mu \oplus \mu , \chi (D) \oplus (-\chi (D))] 
\in KK^{1}(C_{\mathrm{w}}(M), C^{\ast}(M))$ 
since 
$\chi_{0}(x) = x(x^{2}+1)^{-1/2}$ is a chopping function 
and we have $\chi - \chi_{0} \in C_{0}(\mathbb{R})$. 
We denote by $[D]$ this $KK$-element. 
Then we obtain $c_{\cdot} \hat{\otimes}_{C_{\mathrm{w}}(M)} [D] = \text{odd-ind}(D)$. 

Finally, we may suppose our Kasparov product is a counterpart
of Roe's odd index as follows. 
We composite the suspension isomorphism 
$KK(\mathbb{C} , C_{\mathrm{w}}(M)) \to KK^{1}(\mathbb{C},  C_{\mathrm{w}}(M) \otimes C_{0}(\mathbb{R}))$ 
and the induced homomorphism by 
an inclusion $C_{\mathrm{w}}(M) \otimes C_{0}(\mathbb{R}) \to C_{\mathrm{w}}(M) \otimes C(S^{1}) \to C_{\mathrm{w}}(M \times S^{1})$. 
Thus we get a homomorphism 
\[ \sigma : KK(\mathbb{C}, C_{\mathrm{w}}(M)) \to KK^{1}(\mathbb{C} , C_{\mathrm{w}}(M \times S^{1})). \] 

On the other hand, there is a homomorphism 
$KK^{1}(C_{\mathrm{w}}(M), C^{\ast}(M)) 
	\to KK^{1}(C_{\mathrm{w}}(M) , \\ C^{\ast}(M \times S^{1}))$
since $KK^{1}$-group is stably isomorphic. 
Let $D_{S^{1}}$ be a Dirac operator on $S^{1}$. 
$D_{S^{1}}$ determines $[D_{S^{1}}] \in KK^{1}(C(S^{1}), \mathbb{C})$. 
By the composition of the Kasparov product $[D_{S^{1}}] \otimes_{\mathbb{C}} -$ and 
the induced map of this $\ast$-homomorphism 
$C_{\mathrm{w}}(M \times S^{1}) \ni f \mapsto f|_{M \times \{ 1 \}} \otimes 1 \in C_{\mathrm{w}}(M) \otimes C(S^{1})$, 
we get a homomorphism 
\[\tau : KK^{1}(C_{\mathrm{w}}(M), C^{\ast}(M))
	\to KK(C_{\mathrm{w}}(M \times S^{1}) , C^{\ast}(M \times S^{1})). \] 
Consequently, by using homomorphisms $\sigma$ and $\tau$, 
we may suppose our Kasparov product is a counterpart of Roe's odd index.

\subsection{Wrong way functoriality}
\label{subsec:i!}

In this subsection, 
we see a correspondence between an index theorem for partitioned manifolds  
with Connes' wrong way functoriality. 
For the simplicity, we assume $M= \mathbb{R} \times N$ with $N$ closed. 
Let $i : \{ \mathrm{pt} \} \to \mathbb{R}$ be an inclusion map 
defined by $i(\mathrm{pt}) = 0$, and $p : \mathbb{R} \to \{ \mathrm{pt} \}$ a constant map. 
Due to Connes (see, for instance, \cite{MR679730,MR775126}), 
they define wrong way functoriality  
$i ! \in KK^{1}(\mathbb{C}, C_{0}(\mathbb{R}))$, 
$(i \times \mathrm{id}_{N})! \in KK^{1}(C(N),C_{0}(M))$ 
and $p! \in KK^{1}(C_{0}(\mathbb{R}),\mathbb{C})$, respectively. 
We note the following: 
\[
i! \otimes_{C_{0}(\mathbb{R})} p! 
= (p \circ i)! 
= 1_{\mathbb{C}} \in KK^{0}(\mathbb{C}, \mathbb{C}). 
\]

Let $D_{N}$ be the Dirac operator on $N$ and 
$D_{\mathbb{R}}$ the Dirac operator on $\mathbb{R}$ 
defined by a spin structure of $\mathbb{R}$. 
These Dirac operators define elements in $K$-homology, 
that is, they define $[D_{N}] \in KK^{\ast}(C(N),\mathbb{C})$ 
and $[D_{\mathbb{R}}] = p! \in KK^{1}(C_{0}(\mathbb{R}),\mathbb{C})$, respectively. 
Moreover, $D_{N}$ and $D_{\mathbb{R}}$ determine the Dirac operator $D_{M}$
on $M = \mathbb{R} \times N$ satisfies 
$[D_{M}] = [D_{\mathbb{R}}] \otimes_{\mathbb{C}} [D_{N}] \in KK^{\ast + 1}(C_{0}(M),\mathbb{C})$. 

Firstly, we assume $\ast = 0$. 
Let $[[E]] \in KK^{0}(C_{0}(M),C_{0}(M))$ 
be a $KK$-element defined by a vector bundle $E \to M$ 
by using the inclusion map $KK^{0}(\mathbb{C},C_{0}(M)) \to KK^{0}(C_{0}(M),C_{0}(M))$.  
Then we have 
\begin{align*}
&\ (i \times \mathrm{id}_{N})! \otimes_{C_{0}(M)} ([[E]] \otimes_{C_{0}(M)} [D_{M}]) \\
=&\ [[E|_{N}]] \otimes_{C(N)} (i \times \mathrm{id}_{N})! 
	\otimes_{C_{0}(M)} ([D_{\mathbb{R}}] \otimes_{\mathbb{C}} [D_{N}] ) \\
=&\ [[E|_{N}]] \otimes_{C(N)} [D_{N}]. 
\end{align*}
Therefore, by using the map 
$q_{\ast} : KK^{0}(C(N),\mathbb{C}) \to KK^{0}(\mathbb{C},\mathbb{C}) \cong \mathbb{Z}$ 
which is the homomorphism 
induced by the mapping $q$ from $N$ to a point, 
we have $q_{\ast}(i! \otimes_{C_{0}(M)} ([[E]] \otimes_{C_{0}(M)} [D_{M}]))$ 
is equal to the Fredholm index of the Dirac operator on $N$ twisted by $E|_{N}$. 
This is a similar formula to the Roe-Higson index theorem. 
Combine the Roe-Higson index theorem, 
this implies the composition of the assembly map $A$ 
with Connes' pairing of $\zeta$ 
is equal to $q_{\ast}(i! \otimes_{C_{0}(M)} -)$. 

On the other hand, we assume $\ast = 1$. 
Take $\phi \in GL_{l}(C_{0}(M))$, 
then it defines an element $[[\phi]] \in KK^{1}(C_{0}(M),C_{0}(M))$ 
by using the inclusion map $KK^{1}(\mathbb{C},C_{0}(M)) \to KK^{1}(C_{0}(M),C_{0}(M))$. 
The similar argument in $\ast = 0$ implies 
\[ 
q_{\ast}(i! \otimes_{C_{0}(M)} ([[\phi ]] \otimes_{C_{0}(M)} [D_{M}])) 
= \ind (T_{\phi |_{N}}).  
\]

However, since $\phi - 1$ vanishes at infinity, 
$\phi|_{N}$ is homotopic to a constant function in $GL_{l}(C(N))$. 
Therefore the right hand side is always $0$.

\subsection{The Roe cocycle and an extension}
\label{subsec:ext}

In this subsection, 
we see a relationship between the Roe cocycle $\zeta$ 
and an extension. 
Let $M$ be a partitioned manifold and 
$\Pi$ the characteristic function of $M^{+}$. 
Set $H = \Pi (L^{2}(S))$. 
Let $q : \mathcal{L}(H) \to \mathcal{Q}(H)$ be the quotient map to the Calkin algebra. 
Define $\sigma : C^{\ast}(M) \to \mathcal{L}(H)$ by $\sigma (A) = \Pi A \Pi$ and 
$\tau : C^{\ast}(M) \to \mathcal{Q}(H)$ by $\tau = q \circ \sigma$. 
Set $E = \{ (A,T) \in C^{\ast}(M) \oplus \mathcal{L}(H) \,;\, \tau (A) = q(T) \}$. 
Then we get an extension $\tau$ of $C^{\ast}(M)$: 
\[  0 \to \mathcal{K}(H) \hookrightarrow E \to C^{\ast}(M) \to 0. \]
This extension $\tau$ corresponds to 
the Fredholm module $(L^{2}(S) , \Lambda)$ 
on $\mathscr{X}$ and 
the Connes-Chern character of 
$(L^{2}(S), \Lambda)$ equals the Roe cocycle. 

By the definition of a pairing 
$\langle \cdot , \cdot \rangle_{\mathrm{ind}} 
: K_{1}(C^{\ast}(M)) \times \mathrm{Ext}(C^{\ast}(M)) \to \mathbb{Z}$ 
and Proposition \ref{prp:paind}, 
we obtain $\langle [u] , \zeta \rangle = \langle [u] , [\tau ] \rangle_{\mathrm{ind}} = \ind (\Pi u \Pi)$ 
up to a certain constant multiple for any $[u] \in K_{1}(C^{\ast}(M))$. 

Moreover, these are equal to 
the connecting homomorphism of this extension: 
$\partial : K_{1}(C^{\ast}(M)) \to K_{0}(\mathcal{K}(H)) \cong \mathbb{Z}$. 
In fact, for any unitary 
$u \in U(C^{\ast}(M)) = \{ u \in U(C^{\ast}(M)^{+}) \,;\, u - 1 \in C^{\ast}(M) \}$, 
denote by $v(u)$ the partial isometry part of the polar decomposition of $\sigma (u)$. 
Then we have $\tau (u) = q (v(u))$ since $\sigma (u)$ is an essential unitary operator on $H$. 
Therefore $(u, v(u)) \in E$ is a partial isometry lift of $u$. 
So we obtain $\partial ([u]) = [\Pi - v(u)^{\ast}v(u)] - [\Pi - v(u)v(u)^{\ast}] 
	\in K_{0}(\mathcal{K}(H))$. 
By the identification $K_{0}(\mathcal{K}(H)) \cong \mathbb{Z}$, 
we have $\partial ([u]) = \ind (v(u)) = \ind (\sigma (u))$. 
Therefore, we obtain 
$\langle [u] , \zeta \rangle = \langle [u] , [\tau ] \rangle_{\mathrm{ind}} 
	= \partial ([u]) = \ind (\Pi u \Pi)$ up to a certain constant multiple.

%
\section{Calculation of the Kasparov product in the index class}
%

\subsection{Explicit formula of the index class}

In this subsection, we represent the index class by an element in $GL_{l}(C^{\ast}(M))$. 
For this purpose, we present $[D]$ by the Cuntz picture of $KK(C_{\mathrm{w}}(M),C^{\ast}(M))$ and then
we calculate Kasparov product $[\phi] \hat{\otimes}_{C_{\mathrm{w}}(M)}[D]$. 
Set 
\[
C^{\ast}_{b}(M) = \left\{ u + \begin{bmatrix} f & 0 \\ 0 & g \end{bmatrix} 
	\,;\, u \in C^{\ast}(M), f,g \in C_{b}(M) \right\}. 
\] 
Then $C^{\ast}_{b}(M)$ is a $C^{\ast}$-subalgebra of $D^{\ast}(M)$ 
and contains $C^{\ast}(M)$ as an essential ideal. 
Let $\chi \in C(\mathbb{R} ; [-1,1])$ be a chopping function.
Set $\eta (x) = (1-\chi (x)^{2})^{1/2} \in C_{0}(\mathbb{R})$. 
Then $\eta$ is a positive even function 
and we have $\eta (D) \in C^{\ast}(M)$. 

\begin{prp}
Let $\iota : C^{\ast}_{b}(M) \to M_{\infty}(C^{\ast}_{b}(M))$ 
be the standard inclusion and $\mathcal{K}$ the $C^{\ast}$-algebra of all compact operators 
on a countably infinite dimensional Hilbert space. 
Set 
$\mathcal{D}_{\chi} = \chi(D) + \epsilon \eta (D) \in D^{\ast}(M)$, 
\[ \psi_{\chi , +}(f) = 
\iota \left( \mathcal{D}_{\chi}\begin{bmatrix} f & 0 \\ 0 & 0 \end{bmatrix}\mathcal{D}_{\chi} \right) 
\text{~and~~} 
\psi_{-}(f) = \iota \left( \begin{bmatrix} 0 & 0 \\ 0 & f \end{bmatrix} \right) . 
\]
Then 
\[ 
(\psi_{+}, \psi_{-}) : 
C_{\mathrm{w}}(M) \to \mathbb{B}(\mathbb{H}_{C^{\ast}(M)}) \triangleright  C^{\ast}(M) \otimes \mathcal{K}
\]  
is a prequasihomomorphism from $C_{\mathrm{w}}(M)$ to $C^{\ast}(M) \otimes \mathcal{K}$ 
in the sense of \cite[Definition 2.1]{MR733641} 
and one has $[D] = [\psi_{+}, \psi_{-}]$ in $KK(C_{\mathrm{w}}(M), C^{\ast}(M))$. 
We note that $(\psi_{+}, \psi_{-})$ is a quasi-homomorphism 
in the sense of \cite[Definition 3.3.1]{MR2464268}. 
Here, we omit the subscript $\chi$ for the simplicity. 
\end{prp}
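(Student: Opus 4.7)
The plan is to verify the prequasihomomorphism axioms directly and then identify the resulting $KK$-class with $[D]$ via the standard correspondence between the Kasparov and Cuntz pictures. I would first collect the algebraic properties of $\mathcal{D}_{\chi}$. Since $D$ is odd with respect to the grading $\epsilon$, functional calculus yields $\epsilon \chi(D) = -\chi(D) \epsilon$ (as $\chi$ is odd) and $\epsilon \eta(D) = \eta(D) \epsilon$ (as $\eta$ is even). Self-adjointness $\mathcal{D}_{\chi}^{\ast} = \mathcal{D}_{\chi}$ is immediate, and the cross terms in the expansion of $\mathcal{D}_{\chi}^{2}$ cancel, leaving $\mathcal{D}_{\chi}^{2} = \chi(D)^{2} + \eta(D)^{2} = 1$. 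Moreover $\mathcal{D}_{\chi} \in D^{\ast}(M)$ and $[\mathcal{D}_{\chi}, \mu(f)] \in C^{\ast}(M)$ for every $f \in C_{w}(M)$: $\mu(f)$ commutes with $\epsilon$, while $\chi(D) - F_{D} = (\chi - \chi_{0})(D)$ and $\eta(D)$ both lie in $C^{\ast}(M)$, and $[\mu(f), F_{D}] \in C^{\ast}(M)$ by the preceding proposition.

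Next I would check that $\psi_{\pm}$ are $\ast$-homomorphisms. The case of $\psi_{-}$ is tautological; for $\psi_{+}$, multiplicativity uses $\mathcal{D}_{\chi}^{2} = 1$ together with the fact that $\begin{bmatrix} 1 & 0 \\ 0 & 0 \end{bmatrix}$ commutes with $\mu(f) = \begin{bmatrix} f & 0 \\ 0 & f \end{bmatrix}$. The main computation is verifying $\psi_{+}(f) - \psi_{-}(f) \in C^{\ast}(M) \otimes \mathcal{K} = \mathbb{K}(\mathbb{H}_{C^{\ast}(M)})$. Writing $\begin{bmatrix} 1 & 0 \\ 0 & 0 \end{bmatrix} = (1+\epsilon)/2$ and expanding $\mathcal{D}_{\chi} \epsilon \mathcal{D}_{\chi}$ by the (anti)commutation relations together with $\chi^{2} + \eta^{2} = 1$, one obtains
\[ \mathcal{D}_{\chi} \begin{bmatrix} 1 & 0 \\ 0 & 0 \end{bmatrix} \mathcal{D}_{\chi} = \begin{bmatrix} 0 & 0 \\ 0 & 1 \end{bmatrix} + \eta(D) \mathcal{D}_{\chi}. \]
Combining this with $\mathcal{D}_{\chi} \mu(f) = \mu(f) \mathcal{D}_{\chi} + [\mathcal{D}_{\chi}, \mu(f)]$ and the fact that $\mu(f)$ commutes with $\begin{bmatrix} 1 & 0 \\ 0 & 0 \end{bmatrix}$ gives
\[ \psi_{+}(f) - \psi_{-}(f) = \eta(D) \mathcal{D}_{\chi} \mu(f) - \mathcal{D}_{\chi} \begin{bmatrix} 1 & 0 \\ 0 & 0 \end{bmatrix} [\mathcal{D}_{\chi}, \mu(f)], \]
and both summands lie in $C^{\ast}(M)$ by the first paragraph, so after applying $\iota$ the whole difference lies in $C^{\ast}(M) \otimes \mathcal{K}$.

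For the final identification $[\psi_{+}, \psi_{-}] = [D]$ in $KK(C_{w}(M), C^{\ast}(M))$, I would invoke the standard translation from the Kasparov picture to the Cuntz quasi-homomorphism picture (see \cite{MR733641, MR2464268}). The compact perturbation $\mathcal{D}_{\chi} - F_{D} = (\chi - \chi_{0})(D) + \epsilon \eta(D) \in C^{\ast}(M)$ shows that $[D] = [C^{\ast}(M), \mu, \mathcal{D}_{\chi}]$, and since $\mathcal{D}_{\chi}$ is now an exact symmetry, the canonical quasi-homomorphism it produces coincides with $(\psi_{+}, \psi_{-})$. The hardest step is this last identification: $\mathcal{D}_{\chi}$ is neither even nor odd with respect to $\epsilon$, so the passage between the two pictures does not make direct use of the $\epsilon$-grading on the Hilbert module, and careful bookkeeping is needed to match them; the algebraic verification of the prequasihomomorphism axiom in the second paragraph, though routine, is the main technical ingredient.
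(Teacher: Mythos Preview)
Your direct verification that $(\psi_{+},\psi_{-})$ is a prequasihomomorphism is correct and in fact cleaner than the paper's derivation: the identity $\mathcal{D}_{\chi}\begin{bmatrix}1&0\\0&0\end{bmatrix}\mathcal{D}_{\chi} = \begin{bmatrix}0&0\\0&1\end{bmatrix}+\eta(D)\mathcal{D}_{\chi}$ is right, and your expression for $\psi_{+}(f)-\psi_{-}(f)$ lands in $C^{\ast}(M)$ for the reasons you give.

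The gap is in the final identification. You write ``the compact perturbation $\mathcal{D}_{\chi}-F_{D}\in C^{\ast}(M)$ shows that $[D]=[C^{\ast}(M),\mu,\mathcal{D}_{\chi}]$''---but this triple is \emph{not} a Kasparov module, since $\mathcal{D}_{\chi}$ is not odd (the summand $\epsilon\eta(D)$ is even). Compact perturbation invariance only applies to perturbations that preserve the parity of the operator. Consequently the ``standard translation'' from Kasparov to Cuntz picture, which presupposes an odd operator decomposing as $\begin{bmatrix}0&F_{-}\\F_{+}&0\end{bmatrix}$ relative to the grading, does not apply, and you yourself flag this without resolving it.

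The paper handles exactly this point by an explicit chain of manipulations rather than a black-box citation. It first adds the degenerate module $(C^{\ast}(M)^{op},0,\chi(D))$, so that the perturbation $\begin{bmatrix}0&\epsilon\eta(D)\\ \epsilon\eta(D)&0\end{bmatrix}$ is now a genuinely \emph{odd} compact perturbation on the doubled module $C^{\ast}(M)\oplus C^{\ast}(M)^{op}$ (with grading $\epsilon\oplus(-\epsilon)$). It then passes to the trivially-graded picture, where the operator becomes $\begin{bmatrix}0&\mathcal{D}_{\chi}\\\mathcal{D}_{\chi}&0\end{bmatrix}$ on $C^{\ast}(M)\oplus C^{\ast}(M)$, conjugates by the unitary $\mathcal{D}_{\chi}\oplus 1$ to normalise the operator to $\begin{bmatrix}0&1\\1&0\end{bmatrix}$, and stabilises. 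At that point the quasihomomorphism $(\psi_{+},\psi_{-})$ drops out, together with the equality $[D]=[\psi_{+},\psi_{-}]$. The doubling step is the missing idea in your sketch: it is what allows the even perturbation $\epsilon\eta(D)$ to be absorbed legitimately.
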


\begin{proof}
We assume $C^{\ast}(M)^{\mathrm{op}}$ is equipped with the interchanged grading of $C^{\ast}(M)$. 
Then $(C^{\ast}(M)^{\mathrm{op}} , 0 , \chi (D))$ is a degenerate Kasparov $(C_{\mathrm{w}}(M), C^{\ast}(M))$-module. 
So we obtain 
\[ 
[D] = \left[ C^{\ast}(M) \oplus C^{\ast}(M)^{\mathrm{op}} , 
\begin{bmatrix} \mu & 0 \\ 0 & 0 \end{bmatrix}, 
\begin{bmatrix} \chi (D) & 0 \\ 0 & \chi (D) \end{bmatrix}
\right] . 
\]

Since the following  difference  
\begin{equation*} 
\begin{bmatrix} \chi (D) & \epsilon \eta (D) \\ \epsilon \eta (D) & \chi (D) \end{bmatrix}
- 
\begin{bmatrix} \chi (D) & 0 \\ 0 & \chi (D) \end{bmatrix} 
= 
\begin{bmatrix} 0 & \epsilon \eta (D) \\ \epsilon \eta (D) & 0 \end{bmatrix} 
\in M_{2}(C^{\ast}(M)) 
\end{equation*}
is an $C^{\ast}(M)$-compact operator, 
we obtain $[D] = [C^{\ast}(M) \oplus C^{\ast}(M)^{\mathrm{op}} , \mu \oplus 0 , G]$, 
where $G$ is the operator of the first term of the above difference. 
%

The even grading of $C^{\ast}(M)$ is defined by 
the decomposition of $S^{+} \oplus S^{-}$, so we have 
\[
[D] = \left[ E = C^{\ast}(M)^{\mathrm{tri}} \oplus C^{\ast}(M)^{\mathrm{tri}}, 
\begin{bmatrix} \mu & 0 \\ 0 & 0 \end{bmatrix} \oplus 
	\begin{bmatrix} 0 & 0 \\ 0 & \mu \end{bmatrix} , 
\begin{bmatrix} 0 & \mathcal{D} \\ \mathcal{D} & 0 \end{bmatrix} \right] 
\]
under the canonical isomorphism (see \cite[p.119]{MR1656031}): 
\[ 
	KK^{0}(C_{\mathrm{w}}(M) , C^{\ast}(M)) 
	\cong KK^{0}(C_{\mathrm{w}}(M),C^{\ast}(M)^{\mathrm{tri}}),  
\]
where $^{\mathrm{tri}}$ means the trivially grading. 
Now, we conjugate by $\mathcal{D} \oplus 1 \in \mathbb{B}(E)$. 
Then we obtain 
\[
[D] = \left[ E, 
\begin{bmatrix} \mathcal{D}(\mu \oplus 0)\mathcal{D} & 0 \\ 0 & 0 \oplus \mu  \end{bmatrix}, 
\begin{bmatrix} 0 & 1 \\ 1 & 0 \end{bmatrix} \right] . 
\]

By adding a degenerate $(C_{\mathrm{w}}(M),C^{\ast}(M)^{\mathrm{tri}})$-module 
$\left( \mathbb{H}_{C^{\ast}(M)}\oplus \mathbb{H}_{C^{\ast}(M)} , 0 , 
\begin{bmatrix} 0 & 1 \\ 1 & 0 \end{bmatrix} \right)$, we obtain 
\begin{equation*} 
[D] 
= 
\left[ (C^{\ast}(M)^{\mathrm{tri}} \oplus \mathbb{H}_{C^{\ast}(M)})^{2} , 
\begin{bmatrix} 
(\mathcal{D}(\mu \oplus 0)\mathcal{D}) \oplus 0 & 0 \\ 0 & (0 \oplus \mu ) \oplus 0 \end{bmatrix}, 
\begin{bmatrix} 0 & 1 \\ 1 & 0 \end{bmatrix} \right] , 
\end{equation*}
where $\mathbb{H}_{C^{\ast}(M)}$ is a countably generated Hilbert space over $C^{\ast}(M)^{\mathrm{tri}}$. 
We define a unitary operator 
$W : C^{\ast}(M)^{\mathrm{tri}} \oplus \mathbb{H}_{C^{\ast}(M)} \to \mathbb{H}_{C^{\ast}(M)}$ by 
$W(a_{0}, (a_{i})_{i=1}^{\infty}) = (a_{i})_{i=0}^{\infty}$ and conjugate by $W \oplus W$. 
So we obtain 
\[
[D] 
= 
\left[
\mathbb{H}_{C^{\ast}(M)} \oplus \mathbb{H}_{C^{\ast}(M)}, 
\begin{bmatrix} \psi_{+} & 0 \\ 0 & \psi_{-} \end{bmatrix}, 
\begin{bmatrix} 0 & 1 \\ 1 & 0 \end{bmatrix} 
\right] . 
\]
We can show $\psi_{+}(f) \in M_{\infty}(C^{\ast}_{b}(M))$ by using 
\[
\left[ 
\begin{bmatrix} \psi_{+} & 0 \\ 0 & \psi_{-} \end{bmatrix}, 
\begin{bmatrix} 0 & 1 \\ 1 & 0 \end{bmatrix} 
\right] 
\in \mathbb{K}(\hat{\mathbb{H}}_{C^{\ast}(M)^{\mathrm{tri}}}). 
\] 
Therefore, 
\[ 
(\psi_{+},\psi_{-}) : 
C_{\mathrm{w}}(M) \to \mathbb{B}(\mathbb{H}_{C^{\ast}(M)}) \triangleright  C^{\ast}(M) \otimes \mathcal{K}
\] 
is a prequasihomomorphism from $C_{\mathrm{w}}(M)$ to $C^{\ast}(M) \otimes \mathcal{K}$ 
and we obtain $[D] = [\psi_{+},\psi_{-}]$. 
\end{proof}

\begin{rmk}
By definition, one has 
\[
\mathcal{D}\begin{bmatrix} f & 0 \\ 0 & 0 \end{bmatrix}\mathcal{D}
-
\begin{bmatrix} 0 & 0 \\ 0 & f \end{bmatrix} 
= 
\mathcal{D}
\begin{bmatrix} f\eta (D)^{+} & [f , \chi (D)^{-}] \\ 0 & \eta (D)^{-}f \end{bmatrix}
\in C^{\ast}(M) 
\]
for any $f \in C_{\mathrm{w}}(M)$. 
We get another proof of $\psi_{+}(f) \in M_{\infty}(C^{\ast}_{b}(M))$. 
\end{rmk}

The Cuntz picture of Kasparov modules  
suits the Kasparov product with an element in $K_{1}$-group \cite[Remark 1, Theorem 3.3]{MR733641}. 
See also \cite[p.60]{MR2464268}, which contains an explicit formula. 

\begin{prp}
For any $\phi \in GL_{l}(C_{\mathrm{w}}(M))$, one has 
\[
\mathrm{Ind}(\phi , D) = \left[ 
\mathcal{D}
\begin{bmatrix} \phi & 0 \\ 0 & 1 \end{bmatrix}
\mathcal{D}
\begin{bmatrix} 1 & 0 \\ 0 & \phi^{-1} \end{bmatrix}
 \right] \in K_{1}(C^{\ast}(M)). 
\]
\end{prp}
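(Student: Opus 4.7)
The plan is to invoke the explicit formula for the Kasparov product of a $K_{1}$-class with a $KK$-class presented as a quasihomomorphism, applied to the representation $[D] = [\psi_{+}, \psi_{-}]$ from the preceding proposition. The entire computation rests on a single algebraic fact: $\mathcal{D}$ is an involution. Indeed, since $\chi$ is an odd chopping function, $\chi(D)$ anticommutes with $\epsilon$, while $\eta(x) = (1 - \chi(x)^{2})^{1/2}$ is even, so $\eta(D)$ commutes with $\epsilon$; expanding $(\chi(D) + \epsilon\eta(D))^{2}$ the cross-terms cancel and $\chi^{2} + \eta^{2} \equiv 1$ gives $\mathcal{D}^{2} = 1$.

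Applying Cuntz's Kasparov product formula \cite[Theorem 3.3, Remark 1]{MR733641} (see also \cite[p.60]{MR2464268}) in the non-unital setting requires passing to the unital extensions
\[
\tilde{\psi}_{\pm}(\phi) := \psi_{\pm}(\phi) + (1 - \psi_{\pm}(1)),
\]
which are genuine unital $\ast$-homomorphisms because $\psi_{\pm}(1)$ are projections and $\psi_{\pm}(f) = \psi_{\pm}(f)\psi_{\pm}(1) = \psi_{\pm}(1)\psi_{\pm}(f)$. The Cuntz formula then says $[\phi]\,\hat{\otimes}_{C_{w}(M)} [\psi_{+}, \psi_{-}] = \bigl[\tilde{\psi}_{+}(\phi)\,\tilde{\psi}_{-}(\phi^{-1})\bigr]$. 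Using $1 = \mathcal{D}\begin{bmatrix}1 & 0 \\ 0 & 1\end{bmatrix}\mathcal{D}$ (a consequence of $\mathcal{D}^{2} = 1$), one reads off
\[
\tilde{\psi}_{+}(\phi) = \mathcal{D}\begin{bmatrix} \phi & 0 \\ 0 & 1 \end{bmatrix}\mathcal{D}, \qquad \tilde{\psi}_{-}(\phi^{-1}) = \begin{bmatrix} 1 & 0 \\ 0 & \phi^{-1} \end{bmatrix},
\]
and their product is precisely the element displayed in the statement.

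To verify that $u := \tilde{\psi}_{+}(\phi)\tilde{\psi}_{-}(\phi^{-1})$ lies in $GL_{2l}(C^{\ast}(M))$, one uses $\tilde{\psi}_{+}(\phi)\tilde{\psi}_{+}(\phi^{-1}) = 1$ to rewrite
\[
u - 1 = \tilde{\psi}_{+}(\phi)\bigl( \tilde{\psi}_{-}(\phi^{-1}) - \tilde{\psi}_{+}(\phi^{-1}) \bigr) = -\tilde{\psi}_{+}(\phi)\,(\psi_{+} - \psi_{-})(\phi^{-1} - 1).
\]
The quasihomomorphism property $(\psi_{+} - \psi_{-})(f) \in M_{2l}(C^{\ast}(M))$ for $f \in C_{w}(M)$, established in the preceding proposition, then forces $u - 1 \in M_{2l}(C^{\ast}(M))$, so $u \in GL_{2l}(C^{\ast}(M))$ as required.

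The main obstacle is the bookkeeping around the non-unital quasihomomorphism: the naive product $\psi_{+}(\phi)\psi_{-}(\phi^{-1})$ is not invertible, and if one applies Cuntz's formula incorrectly the resulting element fails to lie in the correct unitization. Once the replacement $\psi_{\pm}(\phi) \mapsto \tilde{\psi}_{\pm}(\phi)$ is in place, however, the involutive identity $\mathcal{D}^{2} = 1$ immediately collapses the product into the compact form in the statement, and the computation of $u - 1$ above confirms the class lives in $K_{1}(C^{\ast}(M))$.
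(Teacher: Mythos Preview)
Your proof is correct and follows essentially the same route as the paper's. Your unital extensions $\tilde{\psi}_{\pm}(\phi) = \psi_{\pm}(\phi) + (1-\psi_{\pm}(1))$ coincide with the paper's $\psi_{\pm}(\phi-1)+1$, and both invoke the same Cuntz product formula; you add some detail the paper leaves implicit (the verification that $\mathcal{D}^{2}=1$ and that $u-1 \in M_{2l}(C^{\ast}(M))$), but the argument is the same.
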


\begin{proof}
Firstly, we obtain 
\[ 
\psi_{+}(\phi - 1)+1 = j \left( \mathcal{D}
\begin{bmatrix} \phi & 0 \\ 0 & 1 \end{bmatrix}
\mathcal{D} \right) 
\text{~and~~}
\psi_{-}(\phi - 1)+1 = j \left(
\begin{bmatrix} 1 & 0 \\ 0 & \phi \end{bmatrix} \right) ,
\]
where $j : GL_{l}(C^{\ast}_{b}(M)) \to GL_{\infty}(C^{\ast}_{b}(M))$ 
is the standard inclusion. Thus we get 
\begin{equation*}
\mathrm{Ind}(\phi , D) 
= \left[ 
\mathcal{D}
\begin{bmatrix} \phi & 0 \\ 0 & 1 \end{bmatrix}
\mathcal{D}
\begin{bmatrix} 1 & 0 \\ 0 & \phi^{-1} \end{bmatrix} \right]
\in K_{1}(C^{\ast}(M)). 
\end{equation*}
\end{proof}

The last of this subsection, we back to Connes' pairing in our main theorem. 

\begin{rmk}
\label{rmk:firstind}
By Proposition \ref{prp:paind}, one has 
\[ 
\langle \mathrm{Ind}(\phi , D) , \zeta \rangle 
= -\frac{1}{8 \pi i}\ind \left( \Pi 
\mathcal{D}
\begin{bmatrix} \phi & 0 \\ 0 & 1 \end{bmatrix}
\mathcal{D}
\begin{bmatrix} 1 & 0 \\ 0 & \phi^{-1} \end{bmatrix} 
\Pi : \Pi (L^{2}(S))^{l} \to \Pi (L^{2}(S))^{l} \right) . 
 \]
On the other hand, 
$\Pi u \Pi$ is Fredholm for any $u \in GL_{l}(C^{\ast}_{b}(M))$ 
because of $[f,\Pi] = 0$ for any $f \in C_{b}(M)$. 
This implies 
\begin{equation*}
\langle \mathrm{Ind}(\phi , D) , \zeta \rangle 
= - \frac{1}{8\pi i }
\ind \left(\Pi \mathcal{D}
\begin{bmatrix} \phi & 0 \\ 0 & 1 \end{bmatrix}
\mathcal{D}\Pi\right). 
\end{equation*}
\end{rmk}

In order to use bellow sections, we fix notation here. 
Set  
$u_{\chi , \phi} = \mathcal{D}_{\chi}\begin{bmatrix} \phi & 0 \\ 0 & 1 \end{bmatrix}\mathcal{D}_{\chi}$ 
and 
$v_{\chi , \phi} = u_{\chi , \phi} - \begin{bmatrix} 1 & 0 \\ 0 & \phi \end{bmatrix}$. 
Then we obtain 
$v_{\chi , \phi} = \mathcal{D}_{\chi}\begin{bmatrix} (\phi - 1)\eta (D)^{+} & [\phi , \chi (D)^{-}]
	\\ 0 & \eta (D)^{-} (\phi - 1) \end{bmatrix}$.

\subsection{Another formula in the special case}
\label{subsec:another}

By Remark \ref{rmk:firstind}, our main theorem is 
the coincidence of two Fredholm indices: 
\[
\ind \left(\Pi u_{\chi , \phi}\Pi\right)
= 
\ind (T_{\phi}). 
\]
Both sides of this equation do not change a homotopy of $\phi$. 
Therefore, it suffices to show the case when $\phi \in GL_{l}(\mathscr{W}(M))$. 
In this case, $\phi : M \to GL_{l}(\mathbb{C})$ is a smooth function 
such that $\| \phi \| < \infty$, 
$\| \mathrm{grad} (\phi ) \| < \infty$ and $\| \phi^{-1} \| < \infty$. 
Moreover, we also assume that $\phi$ satisfies $[|D|, \phi] \in \mathcal{L}(L^{2}(S))$. 
This condition is a technical assumption in this subsection. 
For example, if $S \to M$ has bounded geometry and 
all derivatives of $\phi$ are bounded, then $\phi$ 
satisfies this technical assumption. 
Set $\mathscr{W}_{1}(M) = \{ f \in \mathscr{W}(M) ; [|D| , f ] \in \mathcal{L}(L^{2}(S)) \}$. 
In this subsection, we use $\chi_{0}(x) = x(1+x^{2})^{-1/2}$ as a chopping function, 
that is, we use $\mathcal{D} = \mathcal{D}_{\chi_{0}}$. 

In order to prove our main theorem, 
we perturb the operator $\mathcal{D}
\begin{bmatrix} \phi & 0 \\ 0 & 1 \end{bmatrix}
\mathcal{D}$ 
by a homotopy. 
Firstly, for any $t \in [0,1]$, 
set $F_{t} = t + (1-t)(1+D^{2})^{-1/2} \in D^{\ast}(M)$. 
For any $t \in (0,1]$ and $x \in \mathbb{R}$, set 
\[
f_{t}(x) = \frac{1}{t + (1-t)(1+x^{2})^{-1/2}}. 
\]
Then we obtain $f_{t}(D) \in D^{\ast}(M)$ since $f_{t} - 1/t \in C_{0}(M)$.  
Thus 
$F_{t}$ has a bounded inverse 
$F_{t}^{-1} = f_{t}(D)$ for any $t \in (0,1]$. 

Secondly, because of 
\begin{align*}
(D+\epsilon )^{-1}
\begin{bmatrix} f & 0 \\ 0 & 0 \end{bmatrix}
(D + \epsilon ) \sigma
- 
\begin{bmatrix} 0 & 0 \\ 0 & f \end{bmatrix} \sigma
= (D + \epsilon )^{-1}\begin{bmatrix}
f & -c(\mathrm{grad}(f))^{-} \\ 0 & f
\end{bmatrix} \sigma 
\end{align*}
for any $f \in M_{l}(\mathscr{W}(M))$ and $\sigma \in C^{\infty}_{c}(S)$, we obtain 
\[ 
\left\| (D+\epsilon )^{-1}
\begin{bmatrix} f & 0 \\ 0 & 0 \end{bmatrix}
(D + \epsilon ) \sigma \right\|_{L^{2}}
\leq (2\| f \| + \| \mathrm{grad} (f) \| )\| \sigma \|_{L^{2}}. 
 \]
This implies 
\[ 
\rho (f) = (D+\epsilon )^{-1}
\begin{bmatrix} f & 0 \\ 0 & 0 \end{bmatrix}
(D + \epsilon ) \in \mathcal{L}(L^{2}(S))
\] 
since $C^{\infty}_{c}(S)$ is dense in $L^{2}(S)$. 
Moreover, we obtain $\rho (f) \in C^{\ast}_{b}(M)$ 
by $(D+ \epsilon)^{-1} \in C^{\ast}(M)$ and 
\[ 
\begin{bmatrix}
f & -c(\mathrm{grad}(f))^{-} \\ 0 & f
\end{bmatrix} \in D^{\ast}(M). 
\] 

Finally, set 
$\rho_{0}(f) =  \mathcal{D}
\begin{bmatrix} f & 0 \\ 0 & 0 \end{bmatrix}
\mathcal{D}$ 
and 
$\rho_{t}(f) = F_{t}^{-1} \rho (f) F_{t}$ for any 
$t \in (0,1]$ and $f \in \mathscr{W}(M)$. 
Formally, we set $F_{0}^{-1} = (1+D^{2})^{1/2}$. Then we obtain 
$\rho_{t}(f) = F_{t}^{-1} \rho (f) F_{t} \in \mathcal{L}(L^{2}(S))$ for any 
$t \in [0,1]$ and $f \in \mathscr{W}(M)$. 
We note that we have 
\[ 
\rho_{0}(f) = \mathcal{D}
\begin{bmatrix} f & 0 \\ 0 & 0 \end{bmatrix}
\mathcal{D} 
\text{~and~}
\rho_{1}(f) = \rho (f). 
\] 
This family of bounded operator $t \mapsto \rho_{t}(f)$ 
is continuous in $C^{\ast}_{b}(M)$ for $f \in \mathscr{W}_{1}(M)$. 

\begin{prp}
\label{prp:homotopy-Kasp}
For any $t \in [0,1]$ and $f \in M_{l}(\mathscr{W}_{1}(M))$, 
one has 
$\rho_{t}(f) \in M_{l}(C^{\ast}_{b}(M))$. 
Moreover, $[0,1] \ni t \mapsto \rho_{t}(f) \in M_{l}(C^{\ast}_{b}(M)) \subset M_{l}(\mathcal{L}(L^{2}(S)))$ 
is continuous. 
\end{prp}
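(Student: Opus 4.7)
The plan is to replace the awkward pair $(F_t, F_t^{-1})$ by a pair $(G_t, G_t^{-1})$ with $G_t := t(1+D^2)^{1/2} + (1-t)$, for which $\|G_t^{-1}\| \leq 1$ uniformly in $t \in [0,1]$ (by functional calculus, since $t(1+x^2)^{1/2}+(1-t)\geq 1$).

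For the membership claim, for $t\in(0,1]$, $F_t = t + (1-t)F_0$ lies in $C^{\ast}_{b}(M)$ (since $F_0 = (1+D^2)^{-1/2} \in C^{\ast}(M)$ by Proposition~\ref{prp:list}(v)), and the scalar function $1/F_t - 1/t$ lies in $C_{0}(\mathbb{R})$, so $F_t^{-1} \in C^{\ast}_{b}(M)$ as well. Together with the decomposition $\rho(f) = \begin{bmatrix} 0 & 0 \\ 0 & f \end{bmatrix} + (D+\epsilon)^{-1}\begin{bmatrix} f & -c(grad(f))^{-} \\ 0 & f \end{bmatrix} \in C^{\ast}_{b}(M)$ established just before the proposition, and the fact that $C^{\ast}_{b}(M)$ is a $\ast$-subalgebra of $D^{\ast}(M)$, we obtain $\rho_t(f) = F_t^{-1}\rho(f)F_t \in M_l(C^{\ast}_{b}(M))$; the case $t=0$ is covered by the quasihomomorphism picture of the previous subsection.

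The continuity rests on the algebraic identity $(D+\epsilon)F_t = t(D+\epsilon) + (1-t)\mathcal{D} = \mathcal{D}G_t$, together with $F_t = F_0 G_t$, both of which follow from $\mathcal{D} = (D+\epsilon)F_0$ (valid because $\chi = \chi_{0}$ gives $\mathcal{D}^2 = 1$). Substituting into $\rho_t(f) = F_t^{-1}(D+\epsilon)^{-1}M_f(D+\epsilon)F_t$ with $M_f = \begin{bmatrix} f & 0 \\ 0 & 0 \end{bmatrix}$, and using that the relevant operators (functions of $D$ and $\epsilon$) commute pairwise in the way needed, a short calculation on the dense domain $\mathrm{Dom}((1+D^2)^{1/2})$ gives
\[
\rho_t(f) = \mathcal{D}\, G_t^{-1} M_f G_t\, \mathcal{D} = G_t^{-1}\rho_0(f) G_t,
\]
which then extends to all of $L^{2}(S)^{l}$ since the left-hand side is bounded. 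Hence
\[
\rho_t(f) - \rho_0(f) = G_t^{-1}[\rho_0(f), G_t] = t\, G_t^{-1}[\rho_0(f), F_0^{-1}] = t\, \mathcal{D}\, G_t^{-1}[M_f, F_0^{-1}]\, \mathcal{D}.
\]
Because $(1+D^2)^{1/2} - |D|$ is a bounded function of $D$, the hypothesis $f \in M_l(\mathscr{W}_{1}(M))$ is equivalent to $[F_0^{-1}, f]$ extending to a bounded operator on $L^{2}(S)^{l}$, and together with $\|G_t^{-1}\| \leq 1$ this yields $\|\rho_t(f) - \rho_0(f)\| \leq t\|[M_f, F_0^{-1}]\| \to 0$ as $t \to 0$. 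Continuity at an interior point $t_0 \in (0,1]$ follows routinely from norm-continuity of $F_t$ and $F_t^{-1}$ in $t$, using the resolvent identity $F_t^{-1}-F_{t_0}^{-1} = (t_0-t)F_t^{-1}(1-F_0)F_{t_0}^{-1}$.

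The main subtlety is that both $F_0^{-1}$ and $G_t$ (for $t>0$) are unbounded operators, so the identities above must be obtained on the dense invariant domain $\mathrm{Dom}((1+D^2)^{1/2})$ and then extended by continuity. This is precisely where the restriction to $\mathscr{W}_{1}(M)$ is essential: it guarantees that $M_f$ preserves $\mathrm{Dom}((1+D^2)^{1/2})$ (since $[|D|,f]$ is bounded on $L^{2}(S)$), so the commutator $[M_f, F_0^{-1}]$ extends to a bounded operator on $L^{2}(S)^{l}$, without which the $O(t)$ estimate driving continuity at $t=0$ would fail.
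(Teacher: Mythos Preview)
Your argument is correct, and it is organised somewhat differently from the paper's own proof. The membership statement $\rho_{t}(f)\in M_{l}(C^{\ast}_{b}(M))$ and the continuity on $(0,1]$ you handle essentially as the paper does, via $F_{t},F_{t}^{-1}\in C^{\ast}_{b}(M)$ and the resolvent identity. The genuine difference is at $t=0$. The paper expands $\rho_{t}(f)-\rho_{0}(f)$ directly, splits it into two explicit terms, and after some manipulation reduces the delicate second term to the boundedness of $(1+D^{2})^{1/2}f(1+D^{2})^{-1/2}$ and $(1+D^{2})^{1/2}[f,D(1+D^{2})^{-1/2}]$, hence to $[(1+D^{2})^{1/2},f]$ bounded, hence to $[|D|,f]$ bounded. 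Your route is more structural: the factorisation $F_{t}=F_{0}G_{t}$ with $G_{t}=tF_{0}^{-1}+(1-t)$ and $\|G_{t}^{-1}\|\le 1$ gives the identity $\rho_{t}(f)=G_{t}^{-1}\rho_{0}(f)G_{t}$ and hence the clean commutator formula
\[
\rho_{t}(f)-\rho_{0}(f)=t\,\mathcal{D}\,G_{t}^{-1}\,[M_{f},F_{0}^{-1}]\,\mathcal{D},
\]
from which continuity at $0$ (in fact an $O(t)$ bound) is immediate once $[M_{f},F_{0}^{-1}]$ is bounded. Both approaches invoke the $\mathscr{W}_{1}$ hypothesis at exactly the same place, namely the boundedness of $[F_{0}^{-1},f]=[(1+D^{2})^{1/2},f]$; your packaging simply isolates this single commutator from the outset rather than arriving at it after the two-term expansion. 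The payoff of your version is a shorter computation and an explicit rate; the paper's version has the minor advantage of never writing down the unbounded $G_{t}$ and so sidesteps the domain bookkeeping you (correctly) flag in your final paragraph.
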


\begin{proof}
It suffices to show the case when $l=1$. 

Firstly we show $\rho_{t}(f) \in C^{\ast}_{b}(M)$. 
When $t=0,1$, we already proved. We assume $t \in (0,1)$. 
We have 
\begin{align*}
&\rho_{t}(f) - \begin{bmatrix} 0 & 0 \\ 0 & f \end{bmatrix} \\
=& F_{t}^{-1}(D+\epsilon )^{-1}
\begin{bmatrix}
tf + (1-t)f(1+D^{2})^{-1/2} & tc(\mathrm{grad} (f))^{-} + (1-t)[f,D^{-}(1+D^{2})^{-1/2}] \\
0 & tf + (1-t)(1+D^{2})^{-1/2}f
\end{bmatrix}. 
\end{align*}
Because of 
$F_{t}^{-1} \in D^{\ast}(M)$, $(D+\epsilon )^{-1} \in C^{\ast}(M)$ and 
\[
\begin{bmatrix}
tf + (1-t)f(1+D^{2})^{-1/2} & tc(\mathrm{grad} (f))^{-} + (1-t)[f,D^{-}(1+D^{2})^{-1/2}] \\
0 & tf + (1-t)(1+D^{2})^{-1/2}f
\end{bmatrix}  \in D^{\ast}(M), 
\]
we obtain $\rho_{t}(f) \in C^{\ast}_{b}(M)$. 
 
Next we show continuity of $t \mapsto \rho_{t}(f)$. 
$F_{t}^{-1}$, $\rho (f)$ and $F_{t}$ are bounded operators for any $t \in (0,1]$, 
and $[0,1] \ni t \mapsto F_{t} \in \mathcal{L}(L^{2}(M))$ is continuous. 
Thus $t \mapsto \rho_{t}(f)$ is continuous on $(0,1]$. 
The rest of proof is continuity at $t=0$. 
First, we show $\| (D+\epsilon)^{-1}F_{t}^{-1} \| \leq 2$ for any $t \in [0,1]$. 
Set 
\begin{align*}
g_{t}(x) &= \frac{x}{(1+x^{2})(t + (1-t)(1+x^{2})^{-1/2})} 
\text{~and~} \\ 
h_{t}(x) &= \frac{1}{(1+x^{2})(t + (1-t)(1+x^{2})^{-1/2})}. 
\end{align*}
Then we have 
\[ 
|g_{t}(x)| = \frac{1}{t(|x| + 1/|x|) + (1-t)\sqrt{1 + 1/x^{2}}} \leq \frac{1}{2t + 1-t} \leq 1
\]
and 
$|h_{t}(x)| \leq 1$. 
Thus we obtain $\| (D+\epsilon)^{-1}F_{t}^{-1} \| \leq 2$ by  
\[
(D+\epsilon)^{-1}F_{t}^{-1} = D(1+D^{2})^{-1}F_{t}^{-1} + \epsilon (1+D^{2})^{-1}F_{t}^{-1} 
= g_{t}(D) + \epsilon h_{t}(D). 
\]
By using $\| (D+\epsilon)^{-1}F_{t}^{-1} \| \leq 2$, we can prove continuity at $t = 0$. 
For any $t > 0$, a difference $\rho_{t}(f) - \rho_{0}(f)$ equals 
\begin{align*}
& (D+\epsilon )^{-1} F_{t}^{-1}
	\begin{bmatrix}
	tf - tf(1+D^{2})^{-1/2} & tc(\mathrm{grad} (f))^{-} - t[f,D^{-}(1+D^{2})^{-1/2}] \\
	0 & tf - t(1+D^{2})^{-1/2}f
	\end{bmatrix} \\
& \ \ \ \ + \{ (D+\epsilon)^{-1}F_{t}^{-1} - \mathcal{D} \}
\begin{bmatrix}
f(1+D^{2})^{-1/2} & [f,D^{-}(1+D^{2})^{-1/2}] \\
0 & (1+D^{2})^{-1/2}f
\end{bmatrix}.
\end{align*}
The first term converges to $0$ with the operator norm as $t \to 0$. 

We show the second term converges to $0$ with the operator norm as $t \to 0$. 
Due to  
$
\mathcal{D} - (D+\epsilon )F_{t} = t(D + \epsilon)\{ 1 - (1+D^{2})^{-1/2} \}
$, 
the second term is equal to
\[ 
t(D+\epsilon )^{-1}F_{t}^{-1}\{ (1+D^{2})^{-1/2} - 1 \}  (1+D^{2})^{1/2}
\begin{bmatrix}
	f(1+D^{2})^{-1/2} &  [f,D^{-}(1+D^{2})^{-1/2}] \\
	0 & (1+D^{2})^{-1/2}f
	\end{bmatrix}. 
\]
Therefore, if $(1+D^{2})^{1/2}f(1+D^{2})^{-1/2}$ and $(1+D^{2})^{1/2}[f,D(1+D^{2})^{-1/2}]$ 
are bounded,  
the second term converges to $0$ with the operator norm as $t \to 0$. 
We show that $(1+D^{2})^{1/2}f(1+D^{2})^{-1/2}$ and $(1+D^{2})^{1/2}[f,D(1+D^{2})^{-1/2}]$ are bounded. 
By using the following equalities  
\[ (D^{2} + 1)^{1/2}f(D^{2} + 1)^{-1/2} = [(D^{2} + 1)^{1/2}, f](D^{2} + 1)^{-1/2} + f \text{~and} \] 
\[ (D^{2} + 1)^{1/2}[f , D(D^{2} + 1)^{-1/2}] 
	= [(D^{2} + 1)^{1/2} , f]D(D^{2}+1)^{-1/2} + [f,D], \] 
it suffices to show that $[(D^{2} + 1)^{1/2} , f]$ is a bounded operator. 
Because of $\alpha (x) = \sqrt{x^{2} + 1} - |x| \in C_{0}(\mathbb{R})$, 
we have $\alpha (D) \in \mathcal{L}(L^{2}(S))$. 
This implies $[(D^{2} + 1)^{1/2} , f]$ is bounded 
if and only if $[|D| , f]$ is bounded. 
We note that boundness of $[|D| , f]$ 
is required the definition of the algebra $\mathscr{W}_{1}(M)$. 
Hence $(D^{2} + 1)^{1/2}f(D^{2} + 1)^{-1/2}$ and $(D^{2} + 1)^{1/2}[f , D(D^{2} + 1)^{-1/2}]$ are bounded. 
Thus the second term converges to $0$ as $t \to 0$. 
Therefore $t \mapsto \rho_{t}(f)$ is continuous. 
\end{proof}

Due to Proposition \ref{prp:homotopy-Kasp}, the following maps   
\[ 
\Pi \{\rho_{t}(\phi - 1 )+ 1\} \Pi : \Pi (L^{2}(S))^{l} \to \Pi (L^{2}(S))^{l}
\]
determines a continuous family of Fredholm operators for any $\phi \in GL_{l}(\mathscr{W}_{1}(M))$. 
Therefore, we obtain 
\[
\langle \mathrm{Ind}(\phi , D) , \zeta \rangle 
= 
-\frac{1}{8\pi i}
\ind \left(\Pi (D+ \epsilon)^{-1}
\begin{bmatrix} \phi & 0 \\ 0 & 1 \end{bmatrix}
(D+\epsilon )\Pi\right) 
\]
for any $\phi \in GL_{l}(\mathscr{W}_{1}(M))$. 

\begin{rmk}
In the definition of $\rho_{t}$, we do not use the assumption $[|D| , f] \in \mathcal{L}(L^{2}(S))$. 
In particular, one has $\rho (f) \in C^{\ast}_{b}(M)$ for $f \in \mathscr{W}(M)$.  
Set $\varrho (\phi) = \rho (\phi - 1) + 1$ for any $\phi \in GL_{l}(\mathscr{W}(M))$. 
Then the operator $\Pi \varrho (\phi) \Pi$ is Fredholm 
for all $\phi \in GL_{l}(\mathscr{W}(M))$. 
\end{rmk}

%
\section{The case for $\mathbb{R} \times N$}
\label{sec:cyl}
%

Let $N$ be a closed manifold. 
In this section, we prove Theorem \ref{thm} in the case that $M = \mathbb{R} \times N$. 
Recall that $\mathbb{R} \times N$ is partitioned by 
$(\mathbb{R}_{+} \times N , \mathbb{R}_{-} \times N , \{ 0 \} \times N)$. 
Let $S_{N} \to N$ be a Clifford bundle, 
$c_{N}$ the Clifford action on $S_{N}$ and $D_{N}$ the Dirac operator on $S_{N}$. 
Given $\phi \in C^{\infty}(N ; GL_{l}(\mathbb{C}))$, 
we define the map $\tilde{\phi} : \mathbb{R} \times N \to GL_{l}(\mathbb{C})$ 
by $\tilde{\phi} (t,x) = \phi (x)$. 
We often denote $\tilde{\phi}$ by $\phi$ in the sequel. 
Note that we have $\phi \in GL_{l}(\mathscr{W}_{1}(\mathbb{R} \times N))$. 

Let $p : \mathbb{R} \times N \to N$ be the projection to $N$. 
Set $S = p^{\ast}S_{N} \oplus p^{\ast}S_{N}$ and 
$\epsilon = 1 \oplus (-1)$, where $\epsilon$ is the grading operator on $S$. 
Then we define a Clifford action $c : C^{\infty}(TM) \to C^{\infty}(\mathrm{End}(S))$ by
\[
c(\der / \der t) = 
\begin{bmatrix}
0 & 1 \\ -1 & 0
\end{bmatrix}, \ 
c(X) = 
\begin{bmatrix}
0 & c_{N}(X) \\ c_{N}(X) & 0
\end{bmatrix} \text{~for~all~}X \in C^{\infty}(TN).   
\]
Here $\der / \der t$ is a coordinate unit vector field on $\mathbb{R}$. 
Then $S \to M$ is a Clifford bundle 
and the Dirac operator $D$ of $S$ is given by 
\[
D = 
\begin{bmatrix}
0 & \der / \der t + D_{N} \ \\ -\der / \der t + D_{N} & 0
\end{bmatrix}. 
\]
Denote by $H_{+}$ the subspace of $L^{2}(S_{N})$ 
which is generated by non-negative eigenvectors of $D_{N}$. 
Also denote by $H_{-}$ the orthogonal complement of $H_{+}$ in $L^{2}(S)$. 
Set $F = 2P-1$, where $P$ is the projection to $H_{+}$. 

Due to Subsection \ref{subsec:another}, 
it suffices to show 
\[
\ind \left(\Pi (D+ \epsilon)^{-1}
\begin{bmatrix} \phi & 0 \\ 0 & 1 \end{bmatrix}
(D+\epsilon )\Pi\right) 
= 
\ind (T_{\phi}). 
\]
For this purpose, we perturb the operator $\Pi \varrho (\phi) \Pi$ by a homotopy. 
We firstly estimate the supuremum of some functions to prove a continuity of the homotopy.  

\begin{lem}
\label{lem:sup}
\begin{enumerate}[$(i)$]
\item Set 
	\[ f_{s}(x) = \frac{x}{x^{2} + (1-s)^{2}} \text{ and } 
		g_{s}(x) = \frac{1}{x^{2} + (1-s)^{2}}  \]
	for all $s \in [0,1]$ and $x \in \mathbb{R} \setminus (-s, s)$. 
	Then one has $\sup_{x} |f_{s}(x)| \leq 2$ and $\sup_{x} |g_{s}(x)| \leq 2$ for all $s \in [0,1]$. 
\item Set
	\[ \mu_{\lambda , s}(x) = \frac{1}{x^{2} + \{ (1-s)\lambda + s\sgn (\lambda ) \}^{2} + (1-s)^{2}}	\]
	and $\nu_{\lambda , s}(x) = x\mu_{\lambda , s}(x)$ 
	for all $\lambda \in \mathbb{R}$, $s \in [0,1)$ and $x \in \mathbb{R}$, 
	where $\sgn (\lambda)$ is $1$ if $\lambda \geq 0$ or $-1$ if $\lambda < 0$. 
	Then one has 
	\[ \sup_{x} |\mu_{\lambda , s}(x)| \leq \frac{1}{(1-s)^{2}(\lambda^{2} + 1)}
	\text{ and }
	\sup_{x} |\nu_{\lambda , s}(x)| \leq \frac{1}{2(1-s)\sqrt{\lambda^{2} + 1}} \]
	for all $\lambda \in \mathbb{R}$, $s \in [0,1)$. 
\end{enumerate}
\end{lem}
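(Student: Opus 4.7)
The plan is to reduce both parts to elementary one-variable calculus on $x$, treating $s$ and $\lambda$ as parameters, and exploiting the positive sign structure of the denominators so that the suprema can be read off from AM-GM or from monotonicity.

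For part (i), I would first handle $g_s$, which is the simpler of the two. Since $g_s(x) = 1/(x^2 + (1-s)^2)$ is even and strictly decreasing in $|x|$, its supremum over the constraint region $|x| \geq s$ is attained at $|x| = s$, giving the bound $1/(s^2 + (1-s)^2)$. A short estimate shows $s^2 + (1-s)^2 \geq 1/2$ on $[0,1]$ (minimum at $s = 1/2$), yielding $\sup |g_s| \leq 2$. For $f_s$, I would analyze two regimes separately: when $s \in [0,1/2]$, the unconstrained critical point $|x| = 1-s$ lies in the allowed region $|x|\geq s$, and direct substitution gives $|f_s| \leq 1/(2(1-s)) \leq 1$; when $s \in (1/2,1]$, that critical point is excluded, so $|f_s|$ is monotone decreasing on $|x| \geq s$ and the supremum is attained at $|x| = s$, giving $s/(s^2 + (1-s)^2) \leq 2s \leq 2$.

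For part (ii), the natural abbreviation is $A = A_{\lambda,s} := (1-s)\lambda + s\,\sgn(\lambda)$, so that the denominators read $x^2 + A^2 + (1-s)^2$. The key observation to extract is the sign inequality
\[
A^2 \geq (1-s)^2 \lambda^2,
\]
which I would verify by splitting on the sign of $\lambda$: when $\lambda \geq 0$ one has $A = (1-s)\lambda + s \geq (1-s)\lambda \geq 0$, and when $\lambda < 0$ one has $A = (1-s)\lambda - s \leq (1-s)\lambda \leq 0$, so in both cases $|A| \geq (1-s)|\lambda|$. Armed with this, the bound on $\mu_{\lambda,s}$ is immediate: since the denominator is minimized at $x = 0$,
\[
\sup_x |\mu_{\lambda,s}(x)| = \frac{1}{A^2 + (1-s)^2} \leq \frac{1}{(1-s)^2\lambda^2 + (1-s)^2} = \frac{1}{(1-s)^2(\lambda^2 + 1)}.
\]
For $\nu_{\lambda,s}$, the AM-GM inequality $x^2 + C \geq 2|x|\sqrt{C}$ with $C = A^2 + (1-s)^2$ gives $|\nu_{\lambda,s}(x)| \leq 1/(2\sqrt{C})$, and the same inequality on $A$ then yields the claimed bound.

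There is no real obstacle here; the lemma is a collection of elementary estimates, and the only care needed is in bookkeeping the two regimes of $s$ in part (i) and the two signs of $\lambda$ in part (ii). The single nontrivial input is the inequality $A^2 \geq (1-s)^2\lambda^2$, which reflects the fact that the sign perturbation $s\,\sgn(\lambda)$ is chosen precisely so as not to cancel the leading term $(1-s)\lambda$.
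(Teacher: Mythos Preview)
Your proof is correct and follows essentially the same route as the paper's own argument: the same two-regime split on $s$ for $f_s$, the boundary evaluation at $|x|=s$ for $g_s$, the sign-splitting inequality $|A|\geq (1-s)|\lambda|$ for part (ii), and the critical-point (equivalently AM--GM) bound for $\nu_{\lambda,s}$. The only cosmetic difference is that the paper states the $\nu$ bound by evaluating at the maximizer $x=\sqrt{A^2+(1-s)^2}$ rather than invoking AM--GM explicitly.
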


\begin{proof}

\noindent
(i) 
For $0 \leq s \leq 1/2$, we have $|f_{s}(x)| \leq f_{s}(1-s) \leq 1$. 
For $1/2 \leq s \leq 1$, we have $|f_{s}(x)| \leq f_{s}(s) \leq 2$. 
This implies $\sup_{x} |f_{s}(x)| \leq 2$. 
On the other hand, we have $|g_{s}(x)| \leq g_{s}(s) \leq 2$. 

\noindent
(ii) 
For $\lambda \geq 0$, we have $(1-s)\lambda + s \sgn (\lambda) \geq (1-s)\lambda \geq 0$. 
On the other hand, for $\lambda < 0$, we have $(1-s)\lambda + s \sgn (\lambda) \leq (1-s)\lambda < 0$. 
So we obtain $|\mu_{\lambda , s}(x)| \leq h_{\lambda , s}(0) \leq 1/(1-s)^{2}(\lambda^{2} + 1)$. 

On the other hand, we obtain 
\[ |\nu_{\lambda , s}(x)| \leq \nu_{\lambda , s}
	\left(\sqrt{\{ (1-s)\lambda + s\sgn (\lambda ) \}^{2} + (1-s)^{2}}\right)
	\leq \frac{1}{2(1-s)\sqrt{\lambda^{2} + 1}}. 
\]
\end{proof}

\begin{prp}
\label{prp:homotopy}
Set 
\[
D_{s} = 
\begin{bmatrix}
0 & \der / \der t + (1-s)D_{N} + sF \\ -\der / \der t + (1-s)D_{N} + sF & 0
\end{bmatrix} 
\]
for all $s \in [0,1]$ 
and 
\[
u_{\phi, s} = (D_{s} + (1-s)\epsilon )^{-1}
\begin{bmatrix} \phi & 0 \\ 0 & 1 \end{bmatrix}
(D_{s} + (1-s)\epsilon ). 
\]
Then the map $[0,1] \ni s \mapsto u_{\phi , s} \in \mathcal{L}(L^{2}(S)^{l})$ is continuous. 
\end{prp}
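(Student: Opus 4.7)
The plan is to derive an explicit $2 \times 2$ block formula for $u_{\phi, s}$ in terms of $Q_{s} := T_{s}^{2}$, where $T_{s} := D_{s} + (1-s)\epsilon$, and then verify norm continuity of each block via the spectral bounds of Lemma~\ref{lem:sup}. Setting $B_{s} := (1-s)D_{N} + sF$ and $X_{s} := \partial_{t} + B_{s}$, the commutativity of $B_{s}$ with $D_{N}$, $F$, and $\partial_{t}$ allows one to square the block form of $T_{s}$ and obtain $T_{s}^{2} = Q_{s}\cdot I$ with $Q_{s} := -\partial_{t}^{2} + B_{s}^{2} + (1-s)^{2}$. The inequality $b_{s}(\lambda)^{2} \geq s^{2}$ (with $\sgn(0) = 1$) gives $Q_{s} \geq 1/2$ uniformly in $s$, so $T_{s}^{-1} = Q_{s}^{-1}T_{s}$ is bounded by $\sqrt{2}$. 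Since $\Phi := \begin{bmatrix}\phi & 0 \\ 0 & 1\end{bmatrix}$ commutes with $\epsilon$, one has $u_{\phi, s} - \Phi = T_{s}^{-1}[\Phi, D_{s}]$; expanding the commutator using $[\phi, \partial_{t}] = 0$ and then using $X_{s}^{*}X_{s} = X_{s}X_{s}^{*} = Q_{s} - (1-s)^{2}$ yields
\[
u_{\phi, s} = I + \begin{bmatrix}(1-s)^{2}Q_{s}^{-1}(\phi - 1) & (1-s)Q_{s}^{-1}(\phi - 1)X_{s}^{*} \\ (1-s)Q_{s}^{-1}X_{s}(\phi - 1) & Q_{s}^{-1}X_{s}(\phi - 1)X_{s}^{*}\end{bmatrix}.
\]

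The three entries with an explicit $(1-s)$ factor are uniformly bounded thanks to $\|Q_{s}^{-1/2}\| \leq \sqrt{2}$ and $\|Q_{s}^{-1/2}X_{s}^{*}\| \leq 1$, both of which come from Lemma~\ref{lem:sup} applied on each $\lambda$-eigenspace of $D_{N}$ after a Fourier transform in the $\mathbb{R}$-direction; they vanish in norm as $s \to 1$ and are continuous for $s \in [0, 1)$ by a resolvent-identity argument. For the $(2,2)$ entry I would use $[X_{s}, \phi] = [B_{s}, \phi]$ to rewrite
\[
Q_{s}^{-1}X_{s}(\phi - 1)X_{s}^{*} = (\phi - 1) - Q_{s}^{-1}[B_{s}^{2}, \phi] - (1-s)^{2}Q_{s}^{-1}(\phi - 1) + Q_{s}^{-1}[B_{s}, \phi]X_{s}^{*},
\]
with $[B_{s}^{2}, \phi] = B_{s}[B_{s}, \phi] + [B_{s}, \phi]B_{s}$. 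The hypothesis $\phi \in \mathscr{W}_{1}(M)$ makes $[B_{s}, \phi] = (1-s)[D_{N}, \phi] + s[F, \phi]$ a norm-continuous family of bounded operators, and Lemma~\ref{lem:sup}(i) applied to $f_{s}(x) = x/(x^{2} + (1-s)^{2})$ evaluated at $x = b_{s}(\lambda)$ gives $\|B_{s}Q_{s}^{-1}\| \leq 2$---the lemma's hypothesis $|x| \geq s$ matching exactly with $|b_{s}(\lambda)| \geq s$.

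The main obstacle will be continuity at $s = 1$. Although the individual operators $Q_{s}^{-1}$, $Q_{s}^{-1}X_{s}^{*}$, and $Q_{s}^{-1}B_{s}$ fail to converge in norm to their values at $s = 1$ (the scalar multipliers differ by $O(1)$ on the spectral regime $(1-s)|\lambda| \sim 1$), the cancellations $[F^{2}, \phi] = 0$ and $[B_{s}, \phi] \to [F, \phi]$ in norm force every combination actually appearing in the $(2,2)$ entry to converge. Pushing these cancellations through the scalar bounds of Lemma~\ref{lem:sup} entrywise---together with the vanishing of the $(1-s)$-weighted entries---yields norm continuity of $s \mapsto u_{\phi, s}$ on all of $[0, 1]$.
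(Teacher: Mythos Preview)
Your block decomposition is correct and the uniform bounds you extract from Lemma~\ref{lem:sup} are exactly right (including the nice observation that $|b_s(\lambda)|\ge s$ matches the domain restriction in Lemma~\ref{lem:sup}(i)). The overall shape of the argument---reduce to scalar multipliers in the joint $(\xi,\lambda)$-spectrum---is the same as the paper's. The difficulty is precisely where you locate it, at $s=1$, and here your sketch has a genuine gap.

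After your simplification the $(2,2)$ entry reduces (up to harmless terms) to
\[
-\,Q_s^{-1}B_s[B_s,\phi]\;-\;Q_s^{-1}[B_s,\phi]\,\partial_t,
\]
and subtracting the value at $s=1$ and using $[B_s,\phi]\to[F,\phi]$ in norm leaves exactly
\[
-\bigl(Q_s^{-1}X_s-Q_1^{-1}X_1\bigr)[F,\phi].
\]
The algebraic identity $[F^2,\phi]=0$ has already been consumed at this point; it produces no further cancellation. The difference $Q_s^{-1}X_s-Q_1^{-1}X_1$ does \emph{not} tend to $0$ in norm (its multiplier is $O(1)$ in the regime $(1-s)|\lambda|\sim 1$, as you yourself note), and $[F,\phi]$ is only compact on $L^2(S_N)$, hence $1\otimes[F,\phi]$ is not compact on $L^2(S)$. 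So neither a cancellation nor a compactness argument closes this term.

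What the paper actually uses here is the pseudodifferential fact that $D_N[\phi,F]$ is bounded (order~$0$). In the paper's organisation one writes $u_{\phi,s}=\mathrm{diag}(1,\phi)+T_s^{-1}C_s$ with $C_s$ a \emph{bounded} matrix, then expands $T_s^{-1}-T_1^{-1}$ by the resolvent identity and commutes the unbounded piece $\bigl[\begin{smallmatrix}0&D_N\\ D_N&0\end{smallmatrix}\bigr]$ through $T_1^{-1}$ onto $[\phi,F]$, producing $D_N[\phi,F]$. That step---not the identity $[F^2,\phi]=0$---is what kills the problematic spectral regime. Your scheme can be repaired the same way: insert $(1+D_N^2)^{-1/2}(1+D_N^2)^{1/2}$ next to $[F,\phi]$, use that $(1+D_N^2)^{1/2}[F,\phi]$ is bounded, and then check via Lemma~\ref{lem:sup}(ii) that $(Q_s^{-1}X_s-Q_1^{-1}X_1)(1+D_N^2)^{-1/2}\to 0$ in norm. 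Without invoking this regularising property of $[\phi,F]$, the continuity at $s=1$ is not established.
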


\begin{proof}

It suffices to show the case when $l=1$. 
Since we have $(\der / \der t)^{\ast} = -\der / \der t$ and $D_{N}$ is a Dirac operator on $N$, 
$D_{s}$ is a self-adjoint closed operator densely defined on $\mathrm{domain} (D_{s}) = \mathrm{domain} (D)$. 

Next we show $\sigma (D_{s}) \cap (-s, s) = \emptyset$ for all $s \in (0,1]$. 
Set 
\[
T_{s} = 
\begin{bmatrix}
0 & \der / \der t + (1-s)D_{N} \\ -\der / \der t + (1-s)D_{N} & 0
\end{bmatrix} \text{ and } 
J = 
\begin{bmatrix}
0 & F \\ F & 0
\end{bmatrix}. 
\]
These operators $T_{s}$ and $J$ are self-adjoint and we have $D_{s} = T_{s} + sJ$ 
and $T_{s}J+JT_{s} = 2(1-s)D_{N}F \geq 0$ on $\mathrm{domain} (D)$. 
So for any $\sigma \in \mathrm{domain} (D)$, we obtain 
\[
\| D_{s}\sigma \|_{L^{2}}^{2} 
= \|T_{s}\sigma \|_{L^{2}}^{2} + s^{2}\|J\sigma \|_{L^{2}}^{2} 
	+ s \langle (T_{s}J+JT_{s})\sigma , \sigma \rangle_{L^{2}}
\geq s^{2}\| J\sigma \|_{L^{2}}^{2} = s^{2}\| \sigma \|_{L^{2}}^{2}. 
\]
This implies $\sigma (D_{s}) \cap (-s, s) \neq \emptyset$. 
In particular, $D_{1}$ has a bounded inverse. 

On the other hand, when $s \in [0, 1)$, 
we have $(D_{s} + (1-s)\epsilon )^{-1} \in \mathcal{L}(L^{2}(S))$ since
$(D_{s} + (1-s)\epsilon )^{2} = D_{s}^{2} + (1-s)^{2}$ is invertible. 
Therefore $u_{\phi , s}$ is well defined as a closed operator on $L^{2}(S)$ 
with $\mathrm{domain} (u_{\phi, s}) = \mathrm{domain} (D)$ for all $s \in [0,1]$. 
Thus we obtain $u_{\phi , s} \in \mathcal{L}(L^{2}(S))$ by 
\[ 
u_{\phi , s} = \begin{bmatrix} 1 & 0 \\ 0 & \phi \end{bmatrix} + (D_{s} + (1-s)\epsilon )^{-1}
\begin{bmatrix}
(1-s)(\phi - 1) & -(1-s)c_{N}(\mathrm{grad} (\phi ) ) + s[\phi , F] \\ 0 & (1-s)(\phi - 1) 
\end{bmatrix}. 
\]

Next we show continuity of $[0,1] \ni s \mapsto u_{\phi , s} \in \mathcal{L}(L^{2}(S))$. 
First, because of 
$ 
(D_{s} + (1-s)\epsilon )^{-1} = f_{s}(D_{s}) + (1-s)\epsilon g_{s}(D_{s})
$,   
we have 
\begin{equation} 
\label{eq:bdd}
\| (D_{s} + (1-s)\epsilon )^{-1} \| \leq \sup_{x} |f_{s}(x)| + (1-s)\sup_{x} |g_{s}(x)| \leq 4 \tag{$\ast$}
\end{equation}
by Lemma \ref{lem:sup}. 
Therefore $\{ \| (D_{s} + (1-s)\epsilon )^{-1} \| \}_{s \in [0,1]}$ is a bounded set. 

Next, for any $s,s' \in [0,1]$, 
a difference $u_{\phi , s} - u_{\phi , s'}$ equals 
\begin{align*}
& (D_{s} + (1-s)\epsilon )^{-1}
\begin{bmatrix}
(s'-s)(\phi - 1) & (s-s')c_{N}(\mathrm{grad} (\phi ) ) + (s-s')[\phi , F] \\ 0 & (s'-s)(\phi - 1) 
\end{bmatrix} \\
&~~~~~+ 
\{ (D_{s} + (1-s)\epsilon )^{-1} - (D_{s'} + (1-s')\epsilon )^{-1}\}
\begin{bmatrix}
(1-s')(\phi - 1) & -(1-s')c_{N}(\mathrm{grad} (\phi ) ) + s'[\phi , F] \\ 0 & (1-s')(\phi - 1) 
\end{bmatrix} \\
=:& ~ \alpha_{s,s'} + \beta_{s,s'}.
\end{align*}
The first term $\alpha_{s,s'}$ converges to 0 with the operator norm as $s \to s'$. 

The rest of proof is the second term $\beta_{s,s'}$ converges to 0. 
Firstly, we assume $s' = 1$. Then we obtain 
\begin{equation*}
\beta_{s,1} 
= \{ (D_{s} + (1-s)\epsilon )^{-1} - D_{1}^{-1} \} 
\begin{bmatrix}
0 & [\phi , F] \\ 0 & 0 
\end{bmatrix} 
\end{equation*}
and 
\begin{align*}
& (D_{s} + (1-s)\epsilon )^{-1} - D_{1}^{-1} \\
=& ~ (s-1)(D_{s} + (1-s)\epsilon )^{-1}D_{1}^{-1}
\begin{bmatrix}
0 & D_{N} \\ D_{N} & 0
\end{bmatrix}
 + 
(1-s)(D_{s} + (1-s)\epsilon )^{-1}(J - \epsilon )D_{1}^{-1} 
\end{align*}
since $D_{N}$ commutes $F$ and $\der / \der t$ on $\mathrm{domain} (D)$, respectively. 
Therefore, we have 
\begin{align*}
\beta_{s,1} 
=& ~ (s-1)(D_{s} + (1-s)\epsilon )^{-1}D_{1}^{-1}
\begin{bmatrix}
0 & 0 \\ 0 & D_{N}[\phi , F]
\end{bmatrix} \\
 &~~~~~~~~~~~~~~~~+ 
(1-s)(D_{s} + (1-s)\epsilon )^{-1}(J - \epsilon )D_{1}^{-1}
\begin{bmatrix}
0 & [\phi , F] \\ 0 & 0 
\end{bmatrix} 
\end{align*} 
and thus $\beta_{s,1}$ 
converges to 0 with the operator norm as $s \to 1$ since $D_{N}[\phi , F]$ 
is a pseudo-differential operator of order $0$ on $N$ and $\| (D_{s} + (1-s)\epsilon )^{-1}\|$, 
$\|J\|$, $\|\epsilon\|$ and $\|D_{1}^{-1}\|$ 
are uniformly bounded. 

We assume $0 \leq s' < 1$. Since an operator  
\[
\begin{bmatrix}
(1-s')(\phi - 1) & -(1-s')c_{N}(\mathrm{grad} (\phi ) ) + s'[\phi , F] \\ 0 & (1-s')(\phi - 1) 
\end{bmatrix}
\]
is bounded, it suffices to show 
\[ \| (D_{s} + (1-s)\epsilon )^{-1} - (D_{s'} + (1-s')\epsilon )^{-1} \| \to 0 \]
as $s \to s'$. 
We have 
\begin{align*} 
& (D_{s} + (1-s)\epsilon )^{-1} - (D_{s'} + (1-s')\epsilon )^{-1} \\
=& (s-s')(D_{s} + (1-s)\epsilon )^{-1}
	\begin{bmatrix}
	0 & D_{N} \\ D_{N} & 0 
	\end{bmatrix}
	(D_{s'} + (1-s')\epsilon )^{-1} \\ 
	&~~~~~~~~~~~+ 
	(s'-s)(D_{s} + (1-s)\epsilon )^{-1}(J - \epsilon )(D_{s'} + (1-s')\epsilon )^{-1}
\end{align*}
and the second term converges to $0$ with the operator norm as $s \to s'$ by (\ref{eq:bdd}). 
So it suffices to show that an operator $U$ defined by 
\begin{align*}
U
&=  \begin{bmatrix}
	0 & D_{N} \\ D_{N} & 0 
	\end{bmatrix}
	(D_{s'} + (1-s')\epsilon )^{-1} \\
&= \begin{bmatrix}
	D_{N}A_{s'}^{-1}(\der /\der t + (1-s')D_{N} + s'F) & -(1-s')D_{N}A_{s'}^{-1} \\
	(1-s')D_{N}A_{s'}^{-1} & D_{N}A_{s'}^{-1}(-\der /\der t + (1-s')D_{N} + s'F)
	\end{bmatrix}
\end{align*}
is a bounded operator on $L^{2}(S) = L^{2}(\mathbb{R})^{2} \otimes L^{2}(S_{N})$, where set 
\[
A_{s'} = -\der^{2} / \der t^{2} + \{ (1-s')D_{N} + s'F \}^{2} + (1-s')^{2}. 
\] 
Now, if $D_{N}A_{s'}^{-1}$, $iD_{N}A_{s'}^{-1}\der / \der t$ and $D_{N}A_{s'}^{-1}D_{N}$ 
are bounded, then $U$ is also bounded. 
We show $D_{N}A_{s'}^{-1}D_{N}$ is bounded. 
Denote by $E_{\lambda}$ the $\lambda$-eigenspace of $D_{N}$. 
Then $D_{N}A_{s'}^{-1}D_{N}$ acts as 
$
\lambda^{2} 
	\{ -\der^{2} / \der t^{2} + \left( (1-s')\lambda + s' \sgn (\lambda ) \right)^{2} + (1-s')^{2} \}^{-1} 
$
on $L^{2}(\mathbb{R}) \otimes E_{\lambda}$. 
This operator equals to $\lambda^{2} \mu_{\lambda , s'}(i\der / \der t)$ and 
we have $\| \lambda^{2} \mu_{\lambda , s'}(i\der / \der t) \| \leq 1/(1-s')^{2}$ 
by Lemma \ref{lem:sup}. 
Therefore we obtain $\| D_{N}A_{s'}^{-1}D_{N}\| \leq 1/(1-s')^{2}$. 
Similarly, we can show $\| D_{N}A_{s'}^{-1}\| \leq 1/(1-s')^{2}$ (use $\mu_{\lambda , s'}$) and 
$\| iD_{N}A_{s'}^{-1}\der / \der t\| \leq 1/2(1-s')$ (use $\nu_{\lambda , s'}$). 
Thus $U$ is bounded. 
Therefore we obtain 
\[ \| (D_{s} + (1-s)\epsilon )^{-1} - (D_{s'} + (1-s')\epsilon )^{-1} \| \to 0 \]
as $s \to s'$ as required. 

\end{proof}

By Proposition \ref{prp:homotopy}, 
$\Pi u_{\phi , s} \Pi$ is a continuous path in $\mathcal{L}(\Pi (L^{2}(S))^{l})$. 
In fact, this continuous path is a desired homotopy of Fredholm operators. 

\begin{prp}
\label{prp:inRoe}
Set 
\[ 
v_{\phi , s} = 
u_{\phi , s} - \begin{bmatrix} 1 & 0 \\ 0 & \phi \end{bmatrix} 
\]
for all $s \in [0,1]$. One has $[\Pi , v_{\phi , s}]  \sim 0$. 
Therefore 
$
\Pi u_{\phi, s} \Pi : \Pi (L^{2}(S)) \to \Pi (L^{2}(S))
$
is a Fredholm operator. 
\end{prp}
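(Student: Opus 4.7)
The plan is to reduce compactness of $[\Pi, v_{\phi,s}]$ to a Sobolev--trace statement, and then to use invertibility of $u_{\phi,s}$ to deduce Fredholmness of $\Pi u_{\phi,s}\Pi$. Set $B_s = D_s + (1-s)\epsilon$. From the explicit expression for $u_{\phi,s}$ appearing in the proof of Proposition \ref{prp:homotopy}, one has $v_{\phi,s} = B_s^{-1} M_{\phi,s}$, where
\[
M_{\phi,s} = \begin{bmatrix} (1-s)(\phi - 1) & -(1-s)c_N(grad(\phi)) + s[\phi, F] \\ 0 & (1-s)(\phi - 1) \end{bmatrix}.
\]
Because $\phi$ depends only on the $N$-coordinate and $c_N(grad(\phi)), F$ act only in the $N$-direction, the multiplier $M_{\phi,s}$ commutes with $\Pi$. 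Hence $[\Pi, v_{\phi,s}] = [\Pi, B_s^{-1}]M_{\phi,s}$, and the problem reduces to showing that $[\Pi, B_s^{-1}]$ is compact.

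For this I would apply the standard identity $[\Pi, B_s^{-1}] = -B_s^{-1}[\Pi, B_s]B_s^{-1}$. The only terms in $B_s$ that fail to commute with $\Pi$ are the $\pm \partial / \partial t$ entries, so $[\Pi, B_s]$ factors as $\gamma^{\ast} J \gamma$, where $\gamma$ is the trace along $N = \{0\}\times N$ and $J$ is a constant off-diagonal matrix coming from the Dirac block structure. Since $B_s^{-1}$ is bounded from $L^2(S)$ into the Sobolev space $H^1(S)$, the trace theorem gives that $\gamma B_s^{-1} : L^2(S) \to H^{1/2}(S_N)^2$ is bounded. Composing with the compact embedding $H^{1/2}(S_N) \hookrightarrow L^2(S_N)$, valid because $N$ is compact, shows that $\gamma B_s^{-1} : L^2(S) \to L^2(S_N)^2$ is compact. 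Therefore $[\Pi, B_s^{-1}] = -B_s^{-1} \gamma^{\ast} J (\gamma B_s^{-1})$ is compact, and so is $[\Pi, v_{\phi,s}]$.

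For the Fredholm assertion, I would observe from the product formula that $u_{\phi,s}$ is invertible, with inverse $(D_s + (1-s)\epsilon)^{-1}\begin{bmatrix}\phi^{-1} & 0 \\ 0 & 1\end{bmatrix}(D_s + (1-s)\epsilon)$, and the same argument applied to $\phi^{-1}$ gives $[\Pi, u_{\phi,s}^{-1}] \sim 0$. Since $\begin{bmatrix} 1 & 0 \\ 0 & \phi \end{bmatrix}$ commutes with $\Pi$, one also has $[\Pi, u_{\phi,s}] = [\Pi, v_{\phi,s}] \sim 0$. The standard parametrix computation $\Pi u_{\phi,s} \Pi \cdot \Pi u_{\phi,s}^{-1}\Pi = \Pi + \Pi u_{\phi,s}[\Pi, u_{\phi,s}^{-1}]\Pi \equiv \Pi$ modulo compacts on $\Pi(L^2(S))^l$, together with the analogous identity in the reverse order, then shows that $\Pi u_{\phi,s}\Pi$ is Fredholm.

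The main technical obstacle is handling rigorously the distributional commutator $[\Pi, \partial/\partial t] = \delta_0$; the essential input is the compactness of $N$, which makes the Sobolev embedding $H^{1/2}(N) \hookrightarrow L^2(N)$ compact and thereby converts the boundedness of the trace map into compactness of its composition with the resolvent.
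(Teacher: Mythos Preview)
Your argument is essentially correct and reaches the same conclusion as the paper, but the route is genuinely different in one technical respect, and there is a small gap at the endpoint $s=1$.

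\textbf{Comparison.} Both proofs reduce to showing $[\Pi,B_s^{-1}]\sim 0$ and observe that the matrix $M_{\phi,s}$ commutes with $\Pi$. The paper does not work with the distributional commutator $[\Pi,\partial/\partial t]=\delta_0$ directly. Instead it first proves that $g\,B_s^{-1}\sim 0$ for any $g\in C_0(\mathbb{R})$ (by elliptic regularity of $T_s$ on $\mathrm{Supp}(g)\times N$ and the Rellich lemma), then replaces $\Pi$ by a smooth $\varphi$ agreeing with $\Pi$ off a compact set; the commutator $[\varphi,B_s^{-1}]=B_s^{-1}c(\mathrm{grad}\,\varphi)B_s^{-1}$ now involves only a compactly supported coefficient, and the previous step applies. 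Your trace-theorem argument is a legitimate alternative: it is more direct, but it forces you to justify the identity $[\Pi,B_s^{-1}]=-B_s^{-1}\gamma^\ast J\gamma B_s^{-1}$ rigorously (an integration-by-parts computation on $\mathrm{dom}(B_s)$), whereas the paper's smoothing trick sidesteps any distributional manipulation. Both arguments ultimately rest on compactness of $N$, in your case via $H^{1/2}(S_N)\hookrightarrow L^2(S_N)$, in the paper's case via Rellich on $\mathrm{Supp}(g)\times N$.

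\textbf{The gap at $s=1$.} Your claim that $B_s^{-1}:L^2(S)\to H^1(S)$ fails at $s=1$: there $D_1^2=-\partial_t^2+1$ acts as the identity in the $N$-direction, so $D_1^{-1}$ maps $L^2$ only into $H^1(\mathbb{R})\otimes L^2(S_N)$, not into $H^1(M)$. Consequently $\gamma B_1^{-1}$ lands in $L^2(S_N)$ rather than $H^{1/2}(S_N)$, and the compact embedding you invoke is unavailable. The paper handles this by proving the statement only for $s\in[0,1)$ and then invoking the norm continuity of $s\mapsto v_{\phi,s}$ (Proposition~\ref{prp:homotopy}) together with closedness of the compact operators to pass to $s=1$. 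You should add the same limiting step.

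Your Fredholm argument via the explicit parametrix $\Pi u_{\phi,s}^{-1}\Pi$ is correct and slightly more detailed than the paper's, which simply records $\Pi v_{\phi,s}\sim v_{\phi,s}\Pi$ and leaves the Fredholm conclusion to the reader.
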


\begin{proof}
It suffices to show the case when $l=1$. 
Due to Proposition \ref{prp:homotopy} and closedness of $\mathcal{K}(L^{2}(S))$, 
we may assume $s \in [0,1 )$. 

First, we show $g (D_{s} + (1-s)\epsilon )^{-1} \sim 0$ 
for any $g \in C_{0}(\mathbb{R})$. 
Since $C_{c}^{\infty}(\mathbb{R})$ is dense in $C_{0}(\mathbb{R})$, 
it suffices to show the case when $g \in C_{c}^{\infty}(\mathbb{R})$. 
Because $T_{s}$ (see in the proof of Proposition \ref{prp:homotopy}) 
is a first order elliptic differential operator and 
$g$ commutes with a operator on $N$, 
we have 
\[
\| g (D_{s} + (1-s)\epsilon )^{-1}u \|_{H^{1}} 
\leq C ( \| g(D_{s} + (1-s)\epsilon )^{-1}u \|_{L^{2}} + \| T_{s}g(D_{s} + (1-s)\epsilon )^{-1} u \| ) 
\leq C' \| u \|_{L^{2}}
\]
for any $u \in L^{2}(S)$. 
Here, 
$\| \cdot \|_{H^{1}}$ is the Sobolev first norm on a compact set $\mathrm{Supp}(g) \times N$. 
By the Rellich lemma, we have $g (D_{s} + (1-s)\epsilon )^{-1} \sim 0$. 
Thus we also have 
$(D_{s} + (1-s)\epsilon )^{-1}g = (\bar{g}(D_{s} + (1-s)\epsilon )^{-1})^{\ast} \sim 0$. 

Second, we show 
$[ \varphi ,  (D_{s} + (1-s)\epsilon )^{-1}] \sim 0$ for any 
$\varphi \in C^{\infty}(\mathbb{R})$ satisfying $\varphi = \Pi$ 
on the complement of a compact set in $M$. 
Since $\varphi$ commutes with a operator on $N$, 
we have 
\[ [ \varphi ,  (D_{s} + (1-s)\epsilon )^{-1}]
	 = (D_{s} + (1-s)\epsilon )^{-1} 
	\begin{bmatrix} 0 & \varphi' \\ -\varphi' & 0 \end{bmatrix} (D_{s} + (1-s)\epsilon )^{-1}
	\sim 0.  \]

By a similar proof in the proof of Proposition \ref{prp:loccpt} (ii), 
we have $[ \Pi ,  (D_{s} + (1-s)\epsilon )^{-1}] \sim 0$. 
This proves $\Pi v_{\phi , s} \sim v_{\phi , s} \Pi$ and 
thus $
\Pi u_{\phi, s} \Pi : \Pi (L^{2}(S)) \to \Pi (L^{2}(S))
$
is a Fredholm operator. 

%
\end{proof}

Due to Propositions \ref{prp:homotopy} and \ref{prp:inRoe},  
\[ 
\ind (\Pi \varrho(\phi ) \Pi : \Pi (L^{2}(S)) \to \Pi (L^{2}(S)))
\]
is equal to $\ind (\Pi u_{\phi , 1} \Pi )$. 
Let $H : L^{2}(\mathbb{R}) \to L^{2}(\mathbb{R})$ be the Hilbert transformation:  
\[
Hf(t) = -\frac{i}{\pi}\mathrm{p.v.}\int_{\mathbb{R}}\frac{f(y)}{t-y}dy.
\]
Then the eigenvalues of $H$ are only $1$ and $-1$ by $H^{2} = 1$ and $H \neq \pm 1$. 
Let $\mathscr{H}_{-}$ be the $(-1)$-eigenspace of $H$ and 
$\hat{P} : L^{2}(\mathbb{R}) \to \mathscr{H}_{-}$ the projection to $\mathscr{H}_{-}$. 

\begin{prp}
Set $\mathscr{T}_{\phi} = (-it+F)^{-1}\phi (-it+F)$. 
Then $\hat{P}\mathscr{T}_{\phi}\hat{P}^{\ast}$ is a Fredholm operator and 
one has 
\[
\ind (\Pi \varrho(\phi) \Pi : \Pi (L^{2}(S)) \to \Pi (L^{2}(S)))
	= \ind (\hat{P}\mathscr{T}_{\phi}\hat{P}^{\ast} 
	: X \to X), 
\]
where $X = \mathscr{H}_{-}\otimes L^{2}(S_{N})$. 
\end{prp}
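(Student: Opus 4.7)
The plan is to reduce $\Pi\varrho(\phi)\Pi$ to the claimed operator by two successive unitary identifications: first exploit the block structure of $D_1$ to simplify $\Pi u_{\phi,1}\Pi$ algebraically, then apply the Fourier transform in the $\mathbb{R}$-direction to convert $\Pi$ and $A^{\ast}$ into $\hat{P}$ and $-it+F$. Combined with the equality $\ind(\Pi\varrho(\phi)\Pi) = \ind(\Pi u_{\phi,1}\Pi)$ already provided by Propositions \ref{prp:homotopy} and \ref{prp:inRoe}, this yields the claimed index formula; Fredholmness of $\hat{P}\mathscr{T}_{\phi}\hat{P}^{\ast}$ will then follow automatically from unitary equivalence.

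For the algebraic simplification, set $A = \der/\der t + F$ so that $A^{\ast} = -\der/\der t + F$ and $D_1 = \begin{bmatrix} 0 & A^{\ast} \\ A & 0 \end{bmatrix}$. Since $F$ is independent of $t$ and $F^2 = 1$, one finds $A^{\ast}A = AA^{\ast} = 1 - \der^2/\der t^2$, which is bounded below by $1$ and hence invertible. Thus $A$ has a bounded inverse, $D_1^{-1} = \begin{bmatrix} 0 & A^{-1} \\ (A^{\ast})^{-1} & 0 \end{bmatrix}$, and a direct multiplication yields
\[ u_{\phi,1} = D_1^{-1}\begin{bmatrix} \phi & 0 \\ 0 & 1 \end{bmatrix}D_1 = \begin{bmatrix} 1 & 0 \\ 0 & (A^{\ast})^{-1}\phi A^{\ast} \end{bmatrix}. \]
Since $\Pi = \chi_{[0,\infty)}(t)\otimes 1$ acts diagonally on $S = p^{\ast}S_N \oplus p^{\ast}S_N$, the operator $\Pi u_{\phi,1}\Pi$ splits as the identity on $\Pi(L^2(p^{\ast}S_N))^l$ (index zero) direct sum with $\Pi(A^{\ast})^{-1}\phi A^{\ast}\Pi$. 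Hence $\ind(\Pi\varrho(\phi)\Pi) = \ind(\Pi(A^{\ast})^{-1}\phi A^{\ast}\Pi)$.

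Next, apply the unitary $\mathcal{F}_t\otimes 1$, where $\mathcal{F}_t$ is the Fourier transform on $L^2(\mathbb{R})$, and relabel the dual variable $\xi$ as $t$. Then $\der/\der t$ becomes multiplication by $it$, so $A^{\ast} \mapsto -it+F$; since $\phi$ is pulled back from $N$ it is unaffected, hence $(A^{\ast})^{-1}\phi A^{\ast}$ is conjugated into $\mathscr{T}_{\phi}$. For the projection, the classical identities $\mathcal{F}_t H \mathcal{F}_t^{-1} = \sgn$ and $\widehat{\chi_{[0,\infty)}} = \pi\delta - i\,\mathrm{p.v.}(1/\xi)$ (matched to the factor $i/\pi$ in the definition of $H$) give $\mathcal{F}_t\Pi\mathcal{F}_t^{-1} = (I-H)/2 = \hat{P}$. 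Unitary conjugation therefore sends $\Pi(A^{\ast})^{-1}\phi A^{\ast}\Pi$ to $\hat{P}\mathscr{T}_{\phi}\hat{P}^{\ast}$ acting on $X = \mathscr{H}_{-}\otimes L^2(S_N)$, and the index equality follows.

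The main obstacle is the final identification $\mathcal{F}_t\Pi\mathcal{F}_t^{-1} = \hat{P}$: one must carefully track the normalization conventions of the Fourier transform and the $i/\pi$ factor in the Hilbert transform so as to confirm that the $(-1)$-eigenspace of $H$ corresponds precisely to the Fourier image of $L^2(\mathbb{R}_{\geq 0})$ (rather than $L^2(\mathbb{R}_{\leq 0})$, which would reverse the sign). Once that sign is pinned down, the remaining manipulations are routine.
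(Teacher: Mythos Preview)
Your proof is correct and follows essentially the same route as the paper: reduce to $\Pi u_{\phi,1}\Pi$ via Propositions \ref{prp:homotopy} and \ref{prp:inRoe}, compute $u_{\phi,1}$ from the block form of $D_{1}^{-1}$ to isolate $(A^{\ast})^{-1}\phi A^{\ast}$ in the lower-right corner, and then apply the Fourier transform in the $\mathbb{R}$-direction to convert $\Pi$ into $\hat{P}$ and $A^{\ast}$ into $-it+F$. The paper asserts $\mathscr{F}\Pi\mathscr{F}^{-1}=(1-H)/2$ without further comment, so your explicit attention to the sign convention is if anything more careful than the original.
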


\begin{proof}
Due to Propositions \ref{prp:homotopy} and \ref{prp:inRoe}, we have 
\begin{equation*}
\ind (\Pi \varrho (\phi) \Pi : \Pi (L^{2}(S)) \to \Pi (L^{2}(S))) 
= \ind (\Pi u_{\phi , 1} \Pi : \Pi (L^{2}(S)) \to \Pi (L^{2}(S))). 
\end{equation*}
Because of 
\begin{equation*}
u_{\phi , 1} 
= 
\begin{bmatrix}
1 & 0 \\ 0 & (\der / \der t + F)^{-1}\phi (\der / \der t + F) 
\end{bmatrix}, 
\end{equation*}
the quantity $\ind (\Pi \varrho (\phi) \Pi : \Pi (L^{2}(S)) \to \Pi (L^{2}(S)))$ 
equals 
\begin{equation*}
\ind (\Pi (\der / \der t + F)^{-1}\phi (\der / \der t + F) \Pi 
	: \Pi (L^{2}(\mathbb{R})) \otimes L^{2}(S_{N}) \to \Pi (L^{2}(\mathbb{R})) \otimes L^{2}(S_{N})). 
\end{equation*}

Let $\mathscr{F} : L^{2}(\mathbb{R}) \to L^{2}(\mathbb{R})$ be the Fourier transformation: 
\[
\mathscr{F}[f](\xi ) = \int_{\mathbb{R}}e^{-ix\xi}f(x)dx.
\]
Then, we have $\mathscr{F}^{-1} \Pi \mathscr{F} = (1-H)/2 = \hat{P}$ 
and $\mathscr{F}^{-1} \der / \der t \mathscr{F} = -it$. 
This implies  
\begin{equation*}
\ind (\Pi \varrho (\phi) \Pi : \Pi (L^{2}(S)) \to \Pi (L^{2}(S))) 
= \ind (\hat{P}\mathscr{T}_{\phi}\hat{P}^{\ast} 
	: X \to X).
\end{equation*}
\end{proof}

Thus it suffices to calculate $\ind (\hat{P}\mathscr{T}_{\phi}\hat{P}^{\ast} )$ 
in order to prove the main theorem. 
For this purpose, we use eigenfunctions of the Hilbert transformation. 

\begin{lem}
\cite[Theorem 1]{MR1277773} 
Define $a_{n} \in L^{2}(\mathbb{R})$ by 
\[ a_{n}(t) = \frac{(t-i)^{n}}{(t+i)^{n+1}} \]
for all $n \in \mathbb{Z}$. 
Then $\{a_{n}/\sqrt{\pi} \}$ is an orthonormal basis of $L^{2}(\mathbb{R})$ 
and 
\begin{equation*}
Ha_{n} = 
\begin{cases}
a_{n} & \text{if~} n < 0 \\
-a_{n} & \text{if~} n \geq 0
\end{cases}.
\end{equation*}
This implies $\mathscr{H}_{-} = \mathrm{Span}_{\mathbb{C}}\{ a_{n} \}_{n \geq 0}$. 
\end{lem}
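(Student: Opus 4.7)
The plan is to verify the three assertions (orthonormality, basis property, eigenvalue formula) in sequence, each by a short contour-integral or change-of-variables argument.

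\textbf{Orthonormality.} For real $t$ one has $\overline{a_m(t)} = (t+i)^m/(t-i)^{m+1}$, so
\[
a_n(t)\overline{a_m(t)} = \frac{(t-i)^{n-m-1}}{(t+i)^{n-m+1}}.
\]
When $n=m$ this equals $1/(t^2+1)$, whose integral is $\pi$, yielding $\|a_n\|_{L^2}^2 = \pi$. When $n>m$ the only poles lie at $t=-i$, so closing the contour in the upper half plane gives $0$; when $n<m$ the only poles lie at $t=i$ and one closes in the lower half plane.

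\textbf{Basis property.} I would use the Cayley transform $w=(t-i)/(t+i)$, which sends $\mathbb{R}$ bijectively onto $S^1\setminus\{1\}$. Since $t+i=2i/(1-w)$, one has $a_n(t)=w^n(1-w)/(2i)$, and a direct computation with $w=e^{i\theta}$ gives $dt=d\theta/(2\sin^2(\theta/2))$. It follows that the map $U:L^2(\mathbb{R})\to L^2(S^1,d\theta)$ defined by $(Uf)(\theta)=\sqrt{2}\,(1-e^{i\theta})^{-1}f(t(\theta))$ is unitary and sends $a_n$ to $-ie^{in\theta}/\sqrt{2}$. Since $\{e^{in\theta}\}_{n\in\mathbb{Z}}$ is an orthogonal basis of $L^2(S^1,d\theta)$, completeness of $\{a_n/\sqrt{\pi}\}$ in $L^2(\mathbb{R})$ follows.

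\textbf{Eigenvalues.} I would evaluate $Ha_n(\xi)$ by residues. For $n\geq 0$, $a_n(\eta)$ is holomorphic on the closed upper half plane (its only pole being at $\eta=-i$). Closing the contour for $a_n(\eta)/(\xi-\eta)$ in the upper half plane and indenting below the real pole at $\eta=\xi$, the resulting closed contour encloses no singularity, so the principal value integral equals $i\pi a_n(\xi)$ (minus the small semicircle contribution), giving $Ha_n(\xi)=(i/\pi)\cdot i\pi a_n(\xi)=-a_n(\xi)$. For $n<0$, the only pole of $a_n$ is at $\eta=i$, so closing in the lower half plane analogously yields $Ha_n=a_n$. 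Since $\mathscr{H}_-$ is the $(-1)$-eigenspace of $H$ and $\{a_n\}_{n\in\mathbb{Z}}$ diagonalizes $H$, one concludes $\mathscr{H}_-=\overline{\mathrm{Span}_{\mathbb{C}}\{a_n:n\geq 0\}}$.

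The most delicate step is the basis property, where one must compute the Jacobian carefully and choose the intertwining multiplier $(1-e^{i\theta})^{-1}$ so that the $a_n$ map to pure trigonometric monomials on $S^1$; the residue computations are otherwise routine.
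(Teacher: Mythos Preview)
The paper does not supply its own proof of this lemma; it simply cites the result from an external reference. Your approach---residues for orthonormality, the Cayley transform $w=(t-i)/(t+i)$ to carry completeness back to the trigonometric system on $S^{1}$, and a contour argument for the eigenvalues---is exactly the standard route and is the natural way to establish the statement from scratch. The orthonormality and basis portions are correct as written.

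There is, however, a sign slip in the eigenvalue step. You write that after ``closing \ldots\ in the upper half plane and indenting below the real pole at $\eta=\xi$, the resulting closed contour encloses no singularity.'' But if the small semicircle lies in the \emph{lower} half plane while the large one lies in the upper half plane, the pole at $\eta=\xi$ is \emph{inside} the contour, not outside it. If instead one indents above (so that the contour really encloses nothing), the clockwise half-residue at $\eta=\xi$ contributes
\[
-\pi i\cdot\mathrm{Res}_{\eta=\xi}\Bigl(\frac{a_{n}(\eta)}{\xi-\eta}\Bigr)
=-\pi i\cdot\bigl(-a_{n}(\xi)\bigr)=\pi i\,a_{n}(\xi),
\]
so the principal-value integral equals $-\pi i\,a_{n}(\xi)$ and hence $Ha_{n}=(i/\pi)(-\pi i\,a_{n})=+a_{n}$ for $n\geq 0$, not $-a_{n}$. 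One checks this directly for $n=0$: by partial fractions
\[
\mathrm{p.v.}\int_{\mathbb{R}}\frac{d\eta}{(\eta+i)(\xi-\eta)}
=\frac{1}{\xi+i}\Bigl(\mathrm{p.v.}\int\frac{d\eta}{\eta+i}+\mathrm{p.v.}\int\frac{d\eta}{\xi-\eta}\Bigr)
=\frac{-i\pi}{\xi+i},
\]
so $Ha_{0}=a_{0}$. Equivalently, with the paper's convention the Fourier multiplier of $H$ is $\sgn(\xi)$, and $\widehat{a_{0}}$ is supported on $\{\xi>0\}$. Your method is right, but the contour bookkeeping needs to be redone; in fact your sign error appears to cancel a sign inconsistency between the paper's definition of $H$ and the eigenvalue assignment in the lemma as stated.
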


\begin{prp}
One has $\ind (\hat{P}\mathscr{T}_{\phi}\hat{P}^{\ast}) = \ind (T_{\phi})$. 
Therefore Theorem \ref{thm} in the case when $M = \mathbb{R} \times N$ holds. 
\end{prp}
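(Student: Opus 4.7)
The plan is to diagonalize $\hat{P}\mathscr{T}_\phi\hat{P}^*$ into a $2\times 2$ block operator with respect to the spectral decomposition $L^2(S_N)^l = H_+^l \oplus H_-^l$, and then to compute its kernel and cokernel directly by expanding in the basis $\{a_n\}$ of $\mathscr{H}_-$ and exploiting the pointwise invertibility of $\phi$.

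First I would write $\phi$ in block form as $\begin{bmatrix}\alpha & \beta \\ \gamma & \delta\end{bmatrix}$, where $\alpha = P\phi P$ acts as $T_\phi$ on $H_+^l$ and $\beta, \gamma$ are compact (since $[\phi,P]$ is pseudodifferential of order $-1$). Because $F$ acts as $\pm 1$ on $H_\pm^l$, the operator $-it+F$ splits as multiplication by $-it\pm 1$ on $L^2(\mathbb{R})\otimes H_\pm^l$, and a direct block multiplication then gives
\[
\mathscr{T}_\phi \;=\; \begin{bmatrix} 1\otimes\alpha & \omega\otimes\beta \\ \omega^{-1}\otimes\gamma & 1\otimes\delta \end{bmatrix},\qquad \omega(t) := (-it+1)^{-1}(-it-1) = \frac{t-i}{t+i}.
\]
The key observation is that $\omega\cdot a_n = a_{n+1}$, so the compressions satisfy $\hat{P}\omega\hat{P}^* = S$ (the unilateral shift $a_n\mapsto a_{n+1}$ on $\mathscr{H}_-$) and $\hat{P}\omega^{-1}\hat{P}^* = S^*$.

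Compressing to $\mathscr{H}_-$ therefore yields
\[
A := \hat{P}\mathscr{T}_\phi\hat{P}^* = \begin{bmatrix} 1\otimes\alpha & S\otimes\beta \\ S^*\otimes\gamma & 1\otimes\delta \end{bmatrix}
\]
on $X = (\mathscr{H}_-\otimes H_+^l)\oplus(\mathscr{H}_-\otimes H_-^l)$. Expanding $u = \sum_{n\ge 0} a_n\otimes u_n$ with $u_n \in H_+^l$ and $v = \sum_{n\ge 0} a_n\otimes v_n$ with $v_n \in H_-^l$, the system $A(u,v)=0$ unpacks to
\[
\alpha u_0 = 0,\quad \alpha u_n + \beta v_{n-1} = 0\ \ (n\ge 1),\quad \gamma u_{n+1} + \delta v_n = 0\ \ (n\ge 0).
\]
For $n\ge 1$ the middle relation is the $H_+^l$-component of $\phi\binom{u_n}{v_{n-1}}=0$, while the third relation (re-indexed by $m=n+1$) is the $H_-^l$-component of $\phi\binom{u_m}{v_{m-1}}=0$ for $m\ge 1$. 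Combining these, $\phi\binom{u_n}{v_{n-1}}=0$ for every $n\ge 1$, and invertibility of $\phi$ forces $u_n=0$ for $n\ge 1$ and $v_n=0$ for all $n\ge 0$. The residual equation $\alpha u_0=0$ then identifies $\ker A \cong \ker T_\phi$ via $u_0\mapsto a_0\otimes u_0$.

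Finally, since $A^*$ has the same block form built from $\phi^*$ (which is again pointwise invertible), the identical argument applied to $A^*$ gives $\ker A^*\cong \ker T_{\phi^*}=\ker T_\phi^*$, whence $\ind(A) = \dim\ker T_\phi - \dim\ker T_\phi^* = \ind(T_\phi)$; combined with the preceding propositions this will prove Theorem~\ref{thm} for $M=\mathbb{R}\times N$. The main hurdle is the explicit block decomposition of $\mathscr{T}_\phi$: one must check that the off-diagonal entries pick up precisely one factor of $\omega^{\pm 1}$ and that the Cayley factor $\omega$ acts as a shift on the basis $\{a_n\}$. Once that bookkeeping is in place, the kernel recursion collapses in a single step via the invertibility of $\phi$.
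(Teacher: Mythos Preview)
Your proof is correct and rests on the same two observations as the paper's: the block form of $\mathscr{T}_\phi$ with respect to $H_+^l\oplus H_-^l$, and the fact that multiplication by $\omega=(t-i)/(t+i)$ acts as the unilateral shift on the basis $\{a_n\}_{n\ge 0}$ of $\mathscr{H}_-$. The only difference is packaging: instead of solving the kernel recursion for $A$ and $A^*$ separately, the paper sets $X_0=\mathbb{C}a_0\otimes H_+^l$ and $X_1=\bigl(\mathrm{Span}\{a_n\}_{n\ge 1}\otimes H_+^l\bigr)\oplus\bigl(\mathscr{H}_-\otimes H_-^l\bigr)$, notes that $\mathscr{T}_\phi$ already maps $X_1$ into $X_1$ \emph{before} compressing by $\hat P$ (so $\hat P\mathscr{T}_\phi\hat P^*|_{X_1}$ is genuinely invertible with inverse $\mathscr{T}_{\phi^{-1}}|_{X_1}$), and reads off that the $X_0$-block is $\mathrm{id}\otimes T_\phi$, obtaining the index identity in one stroke.
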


\begin{proof}
Set $X_{0} = \mathbb{C}\{ a_{0} \} \otimes H_{+}$ and 
$X_{1} = (\mathrm{Span}_{\mathbb{C}} \{ a_{n}\}_{n \geq 1} \otimes H_{+}) 
	\oplus (\mathscr{H}_{-} \otimes H_{-})$. 
We note that we have $X_{0} \oplus X_{1} = \mathscr{H}_{-}\otimes L^{2}(S_{N}) = X$. 
Let $p : \mathscr{H}_{-} \to \mathbb{C}\{ a_{0} \}$ be the projection to $\mathbb{C}\{ a_{0} \}$. 
Then $p_{0} = p \otimes P : X \to X_{0}$ is the projection to $X_{0}$ and 
$p_{1} = \mathrm{id}_{X} - p_{0} : X \to X_{1}$ is the projection to $X_{1}$. 

By the decomposition of $L^{2}(S_{N}) = H_{+} \oplus H_{-}$, we have 
\begin{equation*}
\mathscr{T}_{\phi} 
= 
\begin{bmatrix}
\mathrm{id}_{L^{2}(\mathbb{R})} \otimes P\phi P^{\ast} & \frac{t-i}{t+i} \otimes P\phi (1-P)^{\ast} \\
\frac{t+i}{t-i} \otimes (1-P)\phi P^{\ast} & \mathrm{id}_{L^{2}(\mathbb{R})} \otimes (1-P)\phi (1-P)^{\ast}
\end{bmatrix}.
\end{equation*}
So we obtain 
$
\hat{P}\mathscr{T}_{\phi}\hat{P}^{\ast}p_{0}^{\ast} 
= 
p^{\ast} \otimes P\phi P^{\ast}
= 
\mathrm{id}_{\mathbb{C}\{ a_{0} \}} \otimes T_{\phi} 
$
and 
\[
\mathscr{T}_{\phi}\hat{P}^{\ast}p_{1}^{\ast}
= 
\begin{bmatrix}
(\hat{P} - p)^{\ast} \otimes P\phi P^{\ast} & \frac{t-i}{t+i}\hat{P}^{\ast} \otimes P\phi (1-P)^{\ast} \\
\frac{t+i}{t-i}(\hat{P} - p)^{\ast} \otimes (1-P)\phi P^{\ast} 
	& \hat{P}^{\ast} \otimes (1-P)\phi (1-P)^{\ast}
\end{bmatrix}. 
\]
This implies  
$\mathrm{Image}(\hat{P}\mathscr{T}_{\phi}\hat{P}^{\ast}p_{0}^{\ast}) \subset X_{0}$, 
$\mathrm{Image}(\mathscr{T}_{\phi}\hat{P}^{\ast}p_{1}^{\ast}) \subset X_{1}$
and 
\[
(\hat{P}\mathscr{T}_{\phi^{-1}}\hat{P}^{\ast}p_{1}^{\ast})(\hat{P}\mathscr{T}_{\phi}\hat{P}^{\ast}p_{1}^{\ast})
= 
\hat{P} \mathscr{T}_{\phi^{-1}}\mathscr{T}_{\phi}\hat{P}^{\ast}p_{1}^{\ast} 
= 
\mathrm{id}_{X_{1}}. 
\]
So $\hat{P}\mathscr{T}_{\phi}\hat{P}^{\ast}$ forms a direct sum of 
an invertible part $\hat{P}\mathscr{T}_{\phi}\hat{P}^{\ast}p_{1}^{\ast}$ and 
another part $\hat{P}\mathscr{T}_{\phi}\hat{P}^{\ast}p_{0}^{\ast}$: 
\[
\hat{P}\mathscr{T}_{\phi}\hat{P}^{\ast}
= 
\begin{bmatrix}
\hat{P}\mathscr{T}_{\phi}\hat{P}^{\ast}p_{0}^{\ast} & 0 \\
0 & \hat{P}\mathscr{T}_{\phi}\hat{P}^{\ast}p_{1}^{\ast} 
\end{bmatrix} \text{~on~} X_{0} \oplus X_{1}. 
\]
This proves
$\ind (\hat{P}\mathscr{T}_{\phi}\hat{P}^{\ast}) 
	= \ind (\hat{P}\mathscr{T}_{\phi}\hat{P}^{\ast}p_{0}^{\ast}) 
	= \ind ( T_{\phi})$. 
\end{proof}

We note that we also get 
\[
\ind \left(\Pi u_{\chi, \phi} \Pi\right) 
= 
\ind (T_{\phi}). 
\]

%
\section{The general case}
\label{sec:gen}
%

In this section we reduce the proof for the general partitioned manifold to 
that of $\mathbb{R} \times N$. 
Our argument is similar to Higson's argument in 
\cite{MR1113688}. 
By above sections, 
it suffices to show the case when $\phi \in GL_{l}(\mathscr{W}(M))$. 
Firstly, we shall show a cobordism invariance. 
See also 
\cite[Lemma 1.4]{MR1113688}. 

\begin{lem}
\label{cobor}
Let $(M^{+}, M^{-}, N)$ and $(M^{+}{}', M^{-}{}' , N')$ be two partitions of $M$. 
Assume that these two partitions are cobordant, that is, 
symmetric differences $M^{\pm} \triangle M^{\mp}{}'$ are compact. 
Let $\Pi$ and $\Pi'$ be the characteristic function of $M^{+}$ and $M^{+}{}'$, respectively. 
Take $\phi \in GL_{l}(\mathscr{W}(M))$. 
Then one has $\ind (\Pi u_{\chi , \phi} \Pi) = \ind (\Pi' u_{\chi , \phi} \Pi')$ 
and $\ind (\Pi \varrho (\phi) \Pi) = \ind (\Pi' \varrho (\phi) \Pi')$. 
\end{lem}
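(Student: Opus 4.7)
My plan is to compare the two Fredholm indices by extending each operator to an invertible-modulo-compacts operator on the whole of $L^{2}(S)^{l}$, chosen so that the two extensions differ by a compact operator; the equality of indices then follows from stability under compact perturbation.

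First, I would exploit the compactness of the symmetric differences to pick $f_{0} \in C_{c}(M)$ with $f_{0}\equiv 1$ on $K := M^{+}\triangle M^{+}{}'$, so that $\Pi - \Pi' = (\Pi-\Pi')f_{0} = f_{0}(\Pi-\Pi')$. Proposition \ref{prp:list} (ii) then gives $rf_{0} \sim 0$ and $f_{0}r \sim 0$ for every $r \in C^{\ast}(M)$, and consequently both $r(\Pi-\Pi')$ and $(\Pi-\Pi')r$ are compact whenever $r \in M_{l}(C^{\ast}(M))$.

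Next, I would observe that both operators of interest admit a common decomposition $u = B + r$ with $B = \mathrm{diag}(1,\phi)$ a bounded invertible matrix multiplication operator (so $B^{-1}$ is bounded since $\phi \in GL_{l}(\mathscr{W}(M))$) and $r \in M_{l}(C^{\ast}(M))$. For $u = u_{\chi,\phi}$ this is the explicit formula for $v_{\chi,\phi}$ from the end of Section 4; for $u = \varrho(\phi)$ it follows from the identity $\rho(f)-\mathrm{diag}(0,f) \in M_{l}(C^{\ast}(M))$ established in Subsection \ref{subsec:another}. The crucial algebraic point is that, as a scalar multiplication operator, $B$ commutes with $\Pi$ and $\Pi'$.

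Then I would introduce the enveloping operators
\[
T_{\Pi} := \Pi u\Pi + (1-\Pi) B (1-\Pi), \qquad T_{\Pi'} := \Pi' u \Pi' + (1-\Pi') B (1-\Pi')
\]
on $L^{2}(S)^{l}$. Because $[B,\Pi]=0$ and $B$ is invertible, $T_{\Pi}$ is block-diagonal with respect to the splitting $L^{2}(S)^{l} = \Pi L^{2}(S)^{l} \oplus (1-\Pi) L^{2}(S)^{l}$, with the second block invertible (inverse $(1-\Pi)B^{-1}(1-\Pi)$). Hence $T_{\Pi}$ is Fredholm and $\ind(T_{\Pi}) = \ind(\Pi u \Pi)$, and similarly for $T_{\Pi'}$. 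Using $\Pi^{2}=\Pi$ and $[B,\Pi]=0$ a brief calculation collapses $T_{\Pi}$ to $B + \Pi r \Pi$, and likewise $T_{\Pi'} = B + \Pi' r \Pi'$, so
\[
T_{\Pi} - T_{\Pi'} = \Pi r \Pi - \Pi' r \Pi' = (\Pi-\Pi')r\Pi + \Pi' r(\Pi-\Pi'),
\]
which is compact by the first step. Stability of the Fredholm index under compact perturbation then yields $\ind(\Pi u \Pi) = \ind(T_{\Pi}) = \ind(T_{\Pi'}) = \ind(\Pi' u \Pi')$, and the identical argument applies to $u = \varrho(\phi)$.

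The only delicate point will be verifying that both $u_{\chi,\phi}$ and $\varrho(\phi)$ genuinely decompose as $B + r$ with $r \in M_{l}(C^{\ast}(M))$; once that is in hand, the rest is straightforward algebra in the pair $C^{\ast}(M) \triangleleft C^{\ast}_{b}(M)$, together with the ideal property supplied by Proposition \ref{prp:list} (ii).
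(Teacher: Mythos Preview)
Your argument is correct and follows essentially the same strategy as the paper: both extend $\Pi u \Pi$ to an operator on all of $L^{2}(S)^{l}$ by inserting $B = \mathrm{diag}(1,\phi)$ on the complementary range, reduce to the form $B + (\text{something in }M_{l}(C^{\ast}(M))$ involving $\Pi)$, and then use that $(\Pi - \Pi')f_{0}$ kills Roe-algebra elements to conclude by compact perturbation. The only cosmetic difference is that the paper first drops one factor of $\Pi$ (via $[u_{\chi,\phi},\Pi]\sim 0$) to arrive at $B + \Pi v_{\chi,\phi}$, whereas you keep $B + \Pi r \Pi$ and handle the two-sided difference directly; both routes are equally short.
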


\begin{proof}
It suffices to show the case when $l=1$. 
Since we have $[\phi , \Pi ] = 0$ and $[u_{\chi , \phi} , \Pi ] \sim 0$, 
we obtain 
\begin{align*}
 & \ind (\Pi u_{\chi , \phi} \Pi : \Pi (L^{2}(S)) \to \Pi (L^{2}(S))) \\
=& \ind \left((1- \Pi ) 
	\begin{bmatrix} 1 & 0 \\ 0 & \phi \end{bmatrix} + \Pi u_{\chi , \phi} : L^{2}(S) \to L^{2}(S) \right) \\
=& \ind \left(
	\begin{bmatrix} 1 & 0 \\ 0 & \phi \end{bmatrix} + \Pi v_{\chi , \phi} : L^{2}(S) \to L^{2}(S) \right).  
\end{align*}
Therefore, it suffices to show 
$\Pi v_{\chi , \phi} \sim \Pi' v_{\chi , \phi}$. 
Now, since $M^{\pm} \triangle M^{\mp}{}'$ are compact, 
there exists $f \in C_{0}(M)$ such that $\Pi - \Pi' = (\Pi - \Pi')f$. 
So we obtain $\Pi v_{\chi , \phi} - \Pi' v_{\chi , \phi} = (\Pi - \Pi') fv_{\chi , \phi} \sim 0$. 
By the similar argument, we can prove 
$\ind (\Pi \varrho (\phi) \Pi) = \ind (\Pi' \varrho (\phi) \Pi')$. 
\end{proof}

Secondly, we shall prove an analogue of Higson's Lemma  
\cite[Lemma 3.1]{MR1113688}. 

\begin{lem}
\label{lem:Higson}
Let $M_{1}$ and $M_{2}$ be two partitioned manifolds and
$S_{j} \to M_{j}$ a Hermitian vector bundle.
Let $\Pi_{j}$ be the characteristic function of $M_{j}^{+}$. 
We assume that there exists an isometry $\gamma : M_{2}^{+} \to M_{1}^{+}$ which lifts
an isomorphism $\gamma^{\ast} : S_{1}|_{M_{1}^{+}} \to S_{2}|_{M_{2}^{+}}$. 
We denote the Hilbert space isometry defined by $\gamma^{\ast}$ 
by the same letter $\gamma^{\ast} : \Pi_{1}(L^{2}(S_{1})) \to \Pi_{2}(L^{2}(S_{2}))$. 
Take $u_{j} \in GL_{l}(C^{\ast}_{b}(M_{j}))$ such that 
$\gamma^{\ast}u_{1}\Pi_{1} \sim \Pi_{2}u_{2}\gamma^{\ast}$. 
Then one has $\ind (\Pi_{1}u_{1}\Pi_{1}) = \ind (\Pi_{2}u_{2}\Pi_{2})$. 

Similarly, if there exists an isometry $\gamma : M_{2}^{-} \to M_{1}^{-}$ which lifts 
an isomorphism $\gamma^{\ast} : S_{1}|_{M_{1}^{-}} \to S_{2}|_{M_{2}^{-}}$ and 
$\gamma^{\ast}u_{1}\Pi_{1} \sim \Pi_{2}u_{2}\gamma^{\ast}$, 
then one has $\ind (\Pi_{1}u_{1}\Pi_{1}) = \ind (\Pi_{2}u_{2}\Pi_{2})$.
\end{lem}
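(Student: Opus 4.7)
The plan is to transport the Fredholm problem on $M_1$ to one on $M_2$ via the unitary induced by $\gamma$ and then conclude via stability of the index under unitary conjugation and compact perturbation. Set $A_j := \Pi_j u_j \Pi_j$ acting on $H_j := \Pi_j(L^2(S_j))^l$. The bundle isomorphism $\gamma^{\ast}$ covering the isometric diffeomorphism $\gamma\colon M_2^{+}\to M_1^{+}$ gives a unitary $V\colon H_1 \to H_2$, which I extend to an operator $V\colon L^2(S_1)^l \to L^2(S_2)^l$ by $V = V\Pi_1$, so that $V^{\ast}V = \Pi_1$ and $VV^{\ast} = \Pi_2$.

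The first step is to check that each $A_j$ is Fredholm. Writing $u_j = w_j + \mathrm{diag}(f_j, g_j)$ with $w_j \in M_{2l}(C^{\ast}(M_j))$ and $f_j, g_j \in M_l(C_b(M_j))$, Proposition~\ref{prp:loccpt}(i) applied to $w_j$ together with the commutativity of $\Pi_j$ with multiplication operators gives $[\Pi_j, u_j] \sim 0$. Then $\Pi_j u_j \Pi_j \cdot \Pi_j u_j^{-1} \Pi_j \sim \Pi_j u_j u_j^{-1} \Pi_j = \Pi_j$ on $H_j$, and the reverse composition is handled the same way, so $A_j$ is Fredholm.

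Next I translate the hypothesis $\gamma^{\ast} u_1 \Pi_1 \sim \Pi_2 u_2 \gamma^{\ast}$ into the intertwining relation $VA_1 \sim A_2 V$. Using $V = V\Pi_1 = \Pi_2 V$ and $[\Pi_j, u_j]\sim 0$, I compute
\[ V A_1 = V\Pi_1 u_1 \Pi_1 \sim \gamma^{\ast} u_1 \Pi_1 \sim \Pi_2 u_2 \gamma^{\ast} = \Pi_2 u_2 \Pi_2 V = A_2 V. \]
Multiplying on the right by $V^{\ast}\colon H_2 \to H_1$ and using $VV^{\ast} = \mathrm{id}_{H_2}$ then yields $V A_1 V^{\ast} \sim A_2$ on $H_2$. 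Stability of the Fredholm index under compact perturbation and unitary conjugation gives $\ind A_1 = \ind(V A_1 V^{\ast}) = \ind A_2$.

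For the second assertion, the same argument applied to the "minus" side produces a unitary intertwining on $(1-\Pi_j)(L^2(S_j))^l$ and hence the equality of $\ind((1-\Pi_j) u_j (1-\Pi_j))$. Because $[\Pi_j, u_j] \sim 0$ and $u_j$ is invertible, its matrix decomposition with respect to $\Pi_j \oplus (1-\Pi_j)$ is essentially block-diagonal with invertible total, so the indices of the two diagonal blocks sum to zero; the previous paragraph then transfers to the plus side. The only even mildly delicate step is verifying that the partial-isometry extension $V = V\Pi_1$ is compatible with the stated intertwining modulo compacts, which is exactly what the commutator identities $[\Pi_j, u_j]\sim 0$ deliver.
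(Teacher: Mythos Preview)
Your argument is correct and follows essentially the same route as the paper. The paper extends the partial isometry $\gamma^{\ast}$ to a global invertible operator $V=\gamma^{\ast}\Pi_{1}+v(1-\Pi_{1})$ by choosing an arbitrary invertible $v$ on the complementary subspaces and then compares the operators $(1-\Pi_{j})+\Pi_{j}u_{j}\Pi_{j}$ on all of $L^{2}(S_{j})$; you instead conjugate directly on $H_{j}$ by the unitary $V\colon H_{1}\to H_{2}$ and invoke compact-perturbation stability there. These are two packagings of the same idea, and your version has the minor advantage of avoiding the auxiliary choice of $v$. A small remark: with your convention $V=V\Pi_{1}=\Pi_{2}V$, the step $V\Pi_{1}u_{1}\Pi_{1}=\gamma^{\ast}u_{1}\Pi_{1}$ and $\Pi_{2}u_{2}\gamma^{\ast}=A_{2}V$ are in fact equalities, not merely equivalences modulo compacts; the only genuine ``$\sim$'' is the hypothesis itself.

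For the second assertion the paper says nothing beyond ``similarly''. Your reduction via the index additivity $\ind(\Pi_{j}u_{j}\Pi_{j})+\ind((1-\Pi_{j})u_{j}(1-\Pi_{j}))=\ind(u_{j})=0$, obtained from $[\Pi_{j},u_{j}]\sim 0$, is a clean way to carry the minus-side isometry back to the plus-side index. Note that when $\gamma$ lives on $M^{-}$, the natural intertwining hypothesis is $\gamma^{\ast}u_{1}(1-\Pi_{1})\sim(1-\Pi_{2})u_{2}\gamma^{\ast}$; you have implicitly used this (the version literally written with $\Pi_{j}$ is automatic, since both sides are then compact), which is the intended reading.
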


\begin{proof}
It suffices to show the case when $l=1$. 
Let $v : (1-\Pi_{1})(L^{2}(S_{1})) \to (1-\Pi_{2})(L^{2}(S_{2}))$ 
be any invertible operator. 
Then $V = \gamma^{\ast}\Pi_{1} + v(1-\Pi_{1}) : L^{2}(S_{1}) \to L^{2}(S_{2})$ is also invertible operator. 
Hence we obtain 
\begin{align*}
& \; V((1-\Pi_{1}) + \Pi_{1}u_{1}\Pi_{1}) - ((1-\Pi_{2}) + \Pi_{2}u_{2}\Pi_{2})V \\
=& \; \gamma^{\ast}\Pi_{1}u_{1}\Pi_{1} - \Pi_{2}u_{2}\Pi_{2}\gamma^{\ast} 
\sim  \gamma^{\ast}u_{1}\Pi_{1} - \Pi_{2}u_{2}\gamma^{\ast} \sim 0. 
\end{align*}
Therefore, we obtain $\ind (\Pi_{1}u_{1}\Pi_{1}) = \ind (\Pi_{2}u_{2}\Pi_{2})$ 
since $V$ is an invertible operator and one has 
$\ind (\Pi_{j}u_{j}\Pi_{j}) = \ind ((1-\Pi_{j}) + \Pi_{j}u_{j}\Pi_{j})$ 
for $j=1,2$. 
\end{proof}

Applying Lemma \ref{lem:Higson}, we prove the following: 

\begin{cor}
\label{ourH}
Let $M_{1}$ and $M_{2}$ be two partitioned manifolds. 
Let $S_{j} \to M_{j}$ be a graded Clifford bundle 
with the grading $\epsilon_{j}$, 
and denote by $D_{j}$ the graded Dirac operator of $S_{j}$. 
We assume that there exists an isometry $\gamma : M_{2}^{+} \to M_{1}^{+}$ 
which lifts isomorphism $\gamma^{\ast} : S_{1}|_{M_{1}^{+}} \to S_{2}|_{M_{2}^{+}}$ 
of graded Clifford structures. 
Moreover, we assume that 
$\phi_{j} \in GL_{l}(\mathscr{W}(M))$ satisfies 
$\phi_{1}(\gamma (x)) = \phi_{2}(x)$ for all $x \in M_{2}^{+}$. 
Then one has $\ind (\Pi_{1} u_{\chi , \phi_{1}} \Pi_{1}) = \ind (\Pi_{2}u_{\chi , \phi_{2}}\Pi_{2})$. 
\end{cor}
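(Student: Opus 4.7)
The plan is to reduce to Lemma~\ref{lem:Higson} applied to a suitable modification of $u_{\chi,\phi_j}$. First I would set
\[ \tilde u_j := u_{\chi,\phi_j}\begin{bmatrix} 1 & 0 \\ 0 & \phi_j^{-1} \end{bmatrix} = 1 + v_{\chi,\phi_j}\begin{bmatrix} 1 & 0 \\ 0 & \phi_j^{-1} \end{bmatrix}, \]
which represents $\mathrm{Ind}(\phi_j,D_j)$ and hence lies in $GL_l(C^{\ast}(M_j)^+) \subset GL_l(C^{\ast}_b(M_j))$. Since $\Pi_j$ commutes with multiplication by $\mathrm{diag}(1,\phi_j^{-1})$ and the latter restricts to an invertible operator on $\Pi_j(L^2(S_j))^l$, one has $\ind (\Pi_j \tilde u_j \Pi_j) = \ind (\Pi_j u_{\chi,\phi_j}\Pi_j)$, so it suffices to apply Lemma~\ref{lem:Higson} to $\tilde u_j$. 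Writing $\tilde u_j = 1 + w_j$ with $w_j \in M_l(C^{\ast}(M_j))$, the $1$-part intertwines exactly under $\gamma^{\ast}$, and so the hypothesis of Lemma~\ref{lem:Higson} reduces (using $[\Pi_j,w_j]\sim 0$ from Proposition~\ref{prp:loccpt}\,(i)) to the compactness of
\[ \Delta := \gamma^{\ast}\Pi_1 w_1 \Pi_1 - \Pi_2 w_2 \Pi_2 \gamma^{\ast}. \]

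To prove $\Delta \in \mathcal{K}$, I would approximate $\chi(D_j)$ and $\eta(D_j)$ in norm by finite-propagation operators $\chi^{\epsilon}(D_j),\eta^{\epsilon}(D_j)$ via Fourier cutoffs, getting propagation $\leq R_\epsilon$ by finite propagation speed of $e^{itD_j}$. This yields approximations $w_j^{\epsilon} \in M_l(C^{\ast}(M_j))$ of $w_j$, of propagation $\leq 2R_\epsilon$, with $\|w_j - w_j^{\epsilon}\|\to 0$ as $\epsilon\to 0$. The key locality observation is: for $\sigma \in \Pi_1(L^2(S_1))$ supported at distance greater than $2R_{\epsilon}$ from $N_1$, $w_1^{\epsilon}\sigma$ uses only the Dirac operator data of $D_1$ in the interior of $M_1^+$. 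Since $\gamma^{\ast}$ identifies this data with that of $D_2$ on $M_2^+$---both the Clifford-metric structure by hypothesis and the multiplications $\phi_j$ via $\phi_1\circ\gamma = \phi_2$---one has $\gamma^{\ast}w_1^{\epsilon}\sigma = w_2^{\epsilon}\gamma^{\ast}\sigma$, both supported in $M_2^+$. Hence $\Delta^{\epsilon} := \gamma^{\ast}\Pi_1 w_1^{\epsilon}\Pi_1 - \Pi_2 w_2^{\epsilon}\Pi_2 \gamma^{\ast}$ annihilates all such $\sigma$.

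Consequently $\Delta^{\epsilon} = \Delta^{\epsilon}\psi_1$, where $\psi_1 \in C^{\infty}(M_1)$ is a smooth cutoff equal to $1$ on a small collar of $N_1$ in $M_1^+$ and $0$ outside a $2R_{\epsilon}$-collar. Since $N_1$ is closed (hence compact), $\psi_1 \in C_c(M_1)\subset C_0(M_1)$, and $\psi_2 := \psi_1\circ\gamma \in C_0(M_2^+)$. Commuting $\psi_j$ through $\Pi_j$ and using $\gamma^{\ast}\psi_1 = \psi_2\gamma^{\ast}$ on $\Pi_1(L^2(S_1))$, each term of $\Delta^{\epsilon}\psi_1$ takes (up to bounded pre/post-composition) the form $\Pi_j w_j^{\epsilon}\psi_j \Pi_j$, which is compact by Proposition~\ref{prp:list}\,(ii) applied to $w_j^{\epsilon}\in C^{\ast}(M_j)$ and $\psi_j\in C_0(M_j)$. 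Hence $\Delta^{\epsilon}\in\mathcal{K}$, and taking $\epsilon\to 0$ yields $\Delta\in\mathcal{K}$ by norm continuity. Lemma~\ref{lem:Higson} then gives the desired index equality. The main obstacle is the locality bookkeeping: constructing the Fourier cutoff so that the approximations to $\chi(D_j),\eta(D_j)$ intertwine on interior sections under $\gamma^{\ast}$, where the compactness of the hypersurface $N$ is essential to ensure that the collar cutoff lies in $C_0$.
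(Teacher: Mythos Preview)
Your argument is correct and follows the same strategy as the paper: reduce to Lemma~\ref{lem:Higson} by exploiting finite propagation of $e^{itD_j}$ so that the relevant operators intertwine under $\gamma^{\ast}$ away from the hypersurface, and then use compactness of $N$ to produce a $C_0$-cutoff that forces the discrepancy to be compact. The only differences are cosmetic: the paper picks a chopping function $\chi$ with $\mathrm{Supp}(\hat{\chi})\subset(-R,R)$ from the outset (using that the index is independent of $\chi$) rather than approximating, works directly with $u_{\chi,\phi_j}\in GL_l(C^{\ast}_{b}(M_j))$ rather than passing to your $\tilde u_j$, and uses a cutoff $\varphi_j$ equal to $\Pi_j$ outside a compact set and supported away from $N_j$---complementary to your compactly supported collar cutoff $\psi_j$.
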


\begin{proof}
Fix small $R > 0$. It suffices to show 
$\gamma^{\ast}u_{\chi , \phi_{1}}\Pi_{1} \sim \Pi_{2} u_{\chi , \phi_{2}} \gamma^{\ast}$ 
the case when a chopping function $\chi \in C(\mathbb{R} ; [-1,1])$ 
satisfies $\mathrm{Supp}(\hat{\chi}) \subset (-R, R)$. 
Set $N_{2R} = \{ x \in M_{1}^{+} \,;\, d (x , N_{1}) \leq 2R \}$. 
Let $\varphi_{1}$ be a smooth function on $M_{1}$ such that 
$\mathrm{Supp} (\varphi_{1}) \subset M_{1}^{+} \setminus N_{2R}$ and 
assume that there exists 
a compact set $K \subset M_{1}$ such that $\varphi_{1} = \Pi_{1}$ on $M_{1} \setminus K$. 
Set $\varphi_{2}(x) = \varphi_{1}(\gamma (x))$ for all $x \in M_{2}^{+}$ 
and $\varphi_{2} = 0$ on $M_{2}^{-}$. 
Then we have $\gamma^{\ast}v_{\chi , \phi_{1}}\Pi_{1} \sim \gamma^{\ast}v_{\chi , \phi_{1}}\varphi_{1}$ and 
$\Pi_{2}v_{\chi , \phi_{2}}\gamma^{\ast} \sim \varphi_{2}v_{\chi , \phi_{2}}\gamma^{\ast}$. 
Thus, if we have 
$\gamma^{\ast}v_{\chi , \phi_{1}}\varphi_{1} \sim \varphi_{2}v_{\chi , \phi_{2}}\gamma^{\ast}$,  
then we obtain 
\begin{equation*}
\gamma^{\ast}u_{\chi , \phi_{1}}\Pi_{1}
\sim
\gamma^{\ast}v_{\chi , \phi_{1}}\varphi_{1} 
	+ \gamma^{\ast}\begin{bmatrix} 1 & 0 \\ 0 & \phi_{1} \end{bmatrix}\Pi_{1}
\sim
\varphi_{2}v_{\chi , \phi_{2}}\gamma^{\ast} 
	+ \Pi_{2}\begin{bmatrix} 1 & 0 \\ 0 & \phi_{2} \end{bmatrix}\gamma^{\ast}
\sim
\Pi_{2} u_{\chi , \phi_{2}} \gamma^{\ast}.
\end{equation*}
We shall show 
$\gamma^{\ast}v_{\chi , \phi_{1}}\varphi_{1} \sim \varphi_{2}v_{\chi , \phi_{2}}\gamma^{\ast}$. 
Now, we have $\gamma^{\ast}v_{\chi , \phi_{1}}\varphi_{1} = v_{\chi , \phi_{2}}\gamma^{\ast}\varphi_{1}$ 
since the propagation of $\chi (D)$ and $\eta (D)$ is less than $R$, respectively, and 
$\gamma^{\ast}D = D\gamma^{\ast}$ on $M^{+}$. 
Moreover, we have $[v_{\chi ,\phi_{2}} , \varphi_{2} ] \sim 0$ 
since $v_{\chi, \phi_{2}} \in M_{l}(C^{\ast}(M))$. 
Therefore, we obtain 
\begin{equation*}
\gamma^{\ast}v_{\chi , \phi_{1}}\varphi_{1} 
= v_{\chi , \phi_{2}}\gamma^{\ast}\varphi_{1} 
= v_{\chi , \phi_{2}}\varphi_{2}\gamma^{\ast} 
\sim \varphi_{2}v_{\chi , \phi_{2}}\gamma^{\ast}.  
\end{equation*}
\end{proof}

In order to prove Corollary \ref{cor:mcor}, 
we apply Lemma \ref{lem:Higson} as follows: 

\begin{cor}
\label{ourHcor}
We also assume as in Corollary \ref{ourH}. 
Then one has $\ind (\Pi_{1}\varrho (\phi_{1})\Pi_{1}) = \ind (\Pi_{2}\varrho (\phi_{2})\Pi_{2})$. 
\end{cor}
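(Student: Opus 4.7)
The strategy is to adapt the argument of Corollary \ref{ourH} to the setting where $\varrho(\phi) = \rho(\phi-1)+1$ involves $(D+\epsilon)^{-1}$ in place of the chopping-function operator $\mathcal{D}_{\chi}$. The crucial obstruction is that $(D+\epsilon)^{-1}$ lacks finite propagation in general, so the clean propagation-based equality used in Corollary \ref{ourH} must be replaced by a norm-approximation argument combined with the closedness of the compact ideal.

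By Lemma \ref{lem:Higson} it suffices to show $\gamma^{\ast} \varrho(\phi_1) \Pi_1 \sim \Pi_2 \varrho(\phi_2) \gamma^{\ast}$. Writing $\varrho(\phi_j) = \begin{bmatrix} 1 & 0 \\ 0 & \phi_j \end{bmatrix} + w(\phi_j)$ with $w(\phi_j) := (D_j+\epsilon_j)^{-1} X_j$ and $X_j := \begin{bmatrix} \phi_j-1 & -c(\mathrm{grad}(\phi_j))^{-} \\ 0 & \phi_j-1 \end{bmatrix} \in D^{\ast}(M_j)$, the constant matrix part commutes with $\Pi_j$ and is intertwined by $\gamma^{\ast}$ exactly (since $\phi_1 \circ \gamma = \phi_2$ on $M_2^+$), reducing the task to $\gamma^{\ast} w(\phi_1) \Pi_1 \sim \Pi_2 w(\phi_2) \gamma^{\ast}$. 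Note that $w(\phi_j) \in C^{\ast}(M_j)$ because $(D_j+\epsilon_j)^{-1} \in C^{\ast}(M_j)$ is absorbed by $X_j \in D^{\ast}(M_j)$ via the ideal property in Proposition \ref{prp:list}(iv).

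For each $R > 0$ I would fix cutoffs $\varphi_j^R \in C^{\infty}(M_j)$ supported in $M_j^+ \setminus N_{2R}$ with $\varphi_j^R = \Pi_j$ outside a compact set and $\varphi_1^R \circ \gamma = \varphi_2^R$. Since $\Pi_j - \varphi_j^R$ is compactly supported, Propositions \ref{prp:list}(ii) and \ref{prp:loccpt}(ii) reduce the problem to showing compactness of
\[ A^R := \gamma^{\ast} w(\phi_1) \varphi_1^R - w(\phi_2) \varphi_2^R \gamma^{\ast}. \]
Decomposing $(D_j+\epsilon_j)^{-1} = h(D_j) + \epsilon_j g(D_j)$ with $h(x) = x/(1+x^2)$ and $g(x) = 1/(1+x^2)$, and using that $\hat{h}, \hat{g} \in L^1(\mathbb{R})$ (both are multiples of $e^{-|\xi|}$), I can approximate $h, g$ in sup-norm by Paley--Wiener functions $h_R, g_R$ with $\mathrm{Supp}(\hat{h}_R), \mathrm{Supp}(\hat{g}_R) \subset (-R, R)$. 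Then $T_R^j := h_R(D_j) + \epsilon_j g_R(D_j)$ has propagation $\leq R$ and $\|(D_j+\epsilon_j)^{-1} - T_R^j\| \leq \delta_R \to 0$. The $2R$-buffer in $\mathrm{Supp}(\varphi_j^R)$ guarantees that all propagated sections remain inside $M_j^+$, where $\gamma$ intertwines the Dirac operators, yielding the exact identity $\gamma^{\ast} T_R^1 X_1 \varphi_1^R = T_R^2 X_2 \varphi_2^R \gamma^{\ast}$, so that $\|A^R\| = O(\delta_R) \to 0$.

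The main obstacle is upgrading norm smallness of $A^R$ to compactness of $A^{R_0}$ for a fixed $R_0$. Since $\varphi_j^R - \varphi_j^{R_0}$ has compact support and $w(\phi_j) \in C^{\ast}(M_j)$, Proposition \ref{prp:list}(ii) shows that $K^R := A^{R_0} - A^R$ is a compact operator for every $R$. The estimate $\|K^R - K^{R'}\| \leq \|A^R\| + \|A^{R'}\| \to 0$ makes $\{K^R\}$ Cauchy in operator norm, converging to a compact limit $K$ by closedness of $\mathcal{K}$; since $A^R \to 0$, we conclude $A^{R_0} = K$ is compact. Combined with Lemma \ref{lem:Higson} this yields the required equality of Fredholm indices.
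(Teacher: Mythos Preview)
Your argument is correct, but it takes a genuinely different route from the paper's proof. The paper avoids the finite-propagation approximation entirely by a direct resolvent computation: with a single cutoff $\varphi_1$ supported in $M_1^{+}$ and $\varphi_2 = \varphi_1 \circ \gamma$, it writes (after commuting $\varphi_j$ past $(D_j+\epsilon_j)^{-1}$ up to compacts)
\[
\gamma^{\ast}\varphi_1 (D_1+\epsilon_1)^{-1} - (D_2+\epsilon_2)^{-1}\gamma^{\ast}\varphi_1
= (D_2+\epsilon_2)^{-1}\bigl\{(D_2+\epsilon_2)\gamma^{\ast}\varphi_1 - \gamma^{\ast}\varphi_1(D_1+\epsilon_1)\bigr\}(D_1+\epsilon_1)^{-1},
\]
and since $\gamma$ intertwines the Dirac operators on $M_j^{+}$, the brace reduces to $\gamma^{\ast}[D_1,\varphi_1] = \gamma^{\ast} c(\mathrm{grad}(\varphi_1))$, which has compact support; sandwiched between resolvents in $C^{\ast}(M_j)$ this is compact. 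So the paper uses the explicit first-order nature of $D_j$ (namely $[D_1,\varphi_1] = c(\mathrm{grad}\,\varphi_1)$) as a substitute for finite propagation, yielding a one-line algebraic reduction.

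Your approach trades this resolvent identity for the Fourier/wave-operator approximation $T_R^j \to (D_j+\epsilon_j)^{-1}$ together with an $R$-dependent family of cutoffs and the Cauchy-limit trick $A^{R_0} = K^R + A^R$ with $K^R$ compact and $\|A^R\|\to 0$. This is longer but perfectly valid, and has the mild conceptual advantage that it never uses the specific form of $[D,\varphi]$: it would apply verbatim to any $w(\phi_j) = B_j X_j$ with $B_j \in C^{\ast}(M_j)$ approximable in norm by finite-propagation operators. The paper's proof, by contrast, is shorter and exploits exactly the differential-operator structure that is available here.
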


\begin{proof}
It suffices to show $\gamma^{\ast}\varrho (\phi_{1})\Pi_{1} \sim \Pi_{2} \varrho (\phi_{2}) \gamma^{\ast}$. 
Let $\varphi_{1}$ be a smooth function on $M_{1}$ such that 
$\mathrm{Supp} (\varphi_{1}) \subset M_{1}^{+}$ and 
assume that there exists 
a compact set $K \subset M_{1}$ such that $\varphi_{1} = \Pi_{1}$ on $M_{1} \setminus K$. 
Set $\varphi_{2}(x) = \varphi_{1}(\gamma (x))$ for all $x \in M_{2}^{+}$ 
and $\varphi_{2} = 0$ on $M_{2}^{-}$. 
Set 
$v_{\phi_{j}} = \varrho (\phi_{j}) - \begin{bmatrix} 1 & 0 \\ 0 & \phi_{j} \end{bmatrix}$. 
Then we have $\gamma^{\ast}v_{\phi_{1}}\Pi_{1} \sim \gamma^{\ast}v_{\phi_{1}}\varphi_{1}$ and 
$\Pi_{2}v_{\phi_{2}}\gamma^{\ast} \sim \varphi_{2}v_{\phi_{2}}\gamma^{\ast}$. 
Thus, if one has $\gamma^{\ast}v_{\phi_{1}}\varphi_{1} \sim \varphi_{2}v_{\phi_{2}}\gamma^{\ast}$, 
then we obtain 
\begin{equation*}
\gamma^{\ast}\varrho (\phi_{1})\Pi_{1}
\sim
\gamma^{\ast}v_{\phi_{1}}\varphi_{1} + \gamma^{\ast}\begin{bmatrix} 1 & 0 \\ 0 & \phi_{1} \end{bmatrix}\Pi_{1}
\sim
\varphi_{2}v_{\phi_{2}}\gamma^{\ast} + \Pi_{2}\begin{bmatrix} 1 & 0 \\ 0 & \phi_{2} \end{bmatrix}\gamma^{\ast}
\sim
\Pi_{2} \varrho (\phi_{2}) \gamma^{\ast}.
\end{equation*}
We shall show $\gamma^{\ast}v_{\phi_{1}}\varphi_{1} \sim \varphi_{2}v_{\phi_{2}}\gamma^{\ast}$. 
In fact, we obtain 
\allowdisplaybreaks
\begin{align*}
\gamma^{\ast}v_{\phi_{1}}\varphi_{1} - \varphi_{2}v_{\phi_{2}}\gamma^{\ast} 
& \sim \; \{\gamma^{\ast}\varphi_{1}(D_{1} + \epsilon_{1} )^{-1} 
		- (D_{2} + \epsilon_{2} )^{-1}\gamma^{\ast}\varphi_{1}\}
	\begin{bmatrix} \phi_{1} - 1 & -c(\mathrm{grad} (\phi_{1} ))^{-} \\ 0 & \phi_{1} - 1 \end{bmatrix} \\
& \sim \; (D_{2} + \epsilon_{2})^{-1}\gamma^{\ast}[D_{1}, \varphi_{1}](D_{1} + \epsilon_{1})^{-1}
	\begin{bmatrix} \phi_{1} - 1 & -c(\mathrm{grad} (\phi_{1} ))^{-} \\ 0 & \phi_{1} - 1 \end{bmatrix} 
\sim  \; 0 
\end{align*}
since $\mathrm{grad} (\varphi_{1})$ has a compact support and $[D_{1}, \varphi_{1}] = c(\mathrm{grad} (\varphi_{1}))$. 
Thus, we get $\gamma^{\ast}u_{\phi_{1}}\Pi_{1} \sim \Pi_{2} u_{\phi_{2}} \gamma^{\ast}$. 
Therefore, we obtain $\ind (\Pi_{1}u_{\phi_{1}}\Pi_{1}) = \ind (\Pi_{2}u_{\phi_{2}}\Pi_{2})$ 
by Lemma \ref{lem:Higson}. 
\end{proof}

\begin{proof}[Proof of Theorem \ref{thm}, the general case]
We assume $\phi \in GL_{l}(\mathscr{W}(M))$. 
Firstly, let $a \in C^{\infty}([-1,1];[-1,1])$ satisfies 
\[ a(t) = 
\begin{cases}
-1 & \text{if~} -1 \leq t \leq -3/4 \\
0 & \text{if~} -2/4 \leq t \leq 2/4 \\
1 & \text{if~} 3/4 \leq t \leq 1
\end{cases}. 
\]
Let $(-4\delta , 4\delta ) \times N$ be diffeomorphic to a tubular neighborhood of $N$ in $M$ satisfies 
\[
\sup_{(t,x),(s,y) \in [-3\delta , 3\delta ] \times N} |\phi (t,x) - \phi (s,y)| < \| \phi^{-1} \|^{-1}. 
\]
Set $\psi (t,x) = \phi (4\delta a(t) , x)$ on $(-4\delta , 4\delta ) \times N$ and 
$\psi = \phi$ on $M \setminus (-4\delta , 4\delta ) \times N$. 
Then we obtain $\psi \in GL_{l}(\mathscr{W}(M))$ and $\| \psi - \phi \| < \| \phi^{-1} \|^{-1}$. 
Thus a map 
$[0,1] \ni t \mapsto \psi_{t} = t\psi + (1-t)\phi \in GL_{l}(\mathscr{W}(M))$ 
is continuous with the uniform norm. 
Therefore it suffices to show the case when $\phi \in GL_{l}(\mathscr{W}(M))$ satisfying 
$\phi (t,x) = \phi (0,x)$ on $(-2\delta , 2\delta) \times N$. 
Due to Lemma \ref{cobor}, we may change a partition of $M$ 
to $(M^{+} \cup ([-\delta , 0] \times N), M^{-}\setminus ((-\delta, 0] \times N), \{ -\delta\} \times N)$ 
without changing $\ind (\Pi u_{\chi , \phi} \Pi)$. 
Then, due to Corollary \ref{ourH}, we may change $M^{+} \cup ([-\delta , 0] \times N)$ to 
$[-\delta , \infty ) \times N$ without changing $\ind (\Pi u_{\chi , \phi} \Pi)$. 
Here, $\phi$ is equal to $\phi (0,x)$ on $[-\delta , \infty ) \times N$ 
and the metric on $[0 , \infty) \times N$ is product. 
We denote this manifold by 
$M' = ([-\delta , \infty ) \times N) \cup (M^{-}\setminus ((-\delta, 0] \times N))$. 
$M'$ is partitioned by 
$([-\delta , \infty ) \times N, M^{-}\setminus ((-\delta, 0] \times N), \{ -\delta \} \times N)$. 
We apply a similar argument to $M'$, 
we may change $M'$ to a product $\mathbb{R} \times N$ without changing $\ind (\Pi u_{\chi , \phi} \Pi)$. 
Now we have changed $M$ to $\mathbb{R} \times N$. 
\end{proof}

\begin{proof}[Proof of Corollary \ref{cor:mcor}, the general case]
Similar. 
\end{proof}

\textit{Acknowledgments} - The author would like to thank 
Professor Hitoshi Moriyoshi for helpful comments.

\bibliographystyle{plain}
\bibliography{referrences_evenRoe}

\end{document}